\begin{document}

\newtheorem{duge}{Lemma}[section]
\newtheorem{prop}[duge]{Proposition}
\newtheorem{theo}[duge]{Theorem}
\newtheorem{cor}[duge]{Corollary}
\newtheorem{hypo}[duge]{Assumption}
\newtheorem{defi}[duge]{Definition}

\theoremstyle{definition}
\newtheorem{rem}[duge]{Remark}
\newtheorem*{ack}{Acknowledgments}

\renewcommand{\P}{\mathbf{P}}
\newcommand{\E}{\mathbf{E}}
\newcommand{\N}{\mathds{N}}
\newcommand{\Z}{\mathds{Z}}
\newcommand{\R}{\mathds{R}}
\newcommand{\T}{\mathds{T}}

\newcommand{\cste}[1]{\ensuremath{c_{#1}}}
\newcommand{\un}{\mathds{1}}
\newcommand{\defeq}{\ensuremath{\overset{\textup{\tiny{def}}}{=}}}
\newcommand{\ent}[1]{\lfloor{#1}\rfloor}

\newcommand{\pere}[1]{\ensuremath{\stackrel{\leftarrow}{#1}}}
\newcommand{\pereex}[2]{\raisebox{1pt}{\ensuremath{\stackrel{\leftarrow}{#1}_{\hspace*{-1pt}\raisebox{-1pt}{$\scriptstyle{#2}$}}}}}
\newcommand{\fils}[2]{\ensuremath{\stackrel{\rightarrow}{#1}^{\raisebox{-1pt}{$\hspace{0.5pt}\scriptscriptstyle{#2}$}}}}
\newcommand{\filsex}[3]{\raisebox{1pt}{\ensuremath{\stackrel{\rightarrow}{#1}_{\hspace*{-1pt}#2}^{\raisebox{-1pt}{$\hspace{0.5pt}\scriptscriptstyle{#3}$}}}}}

\newcommand{\sumstack}[2]{\ensuremath{\sum_{\substack{#1 \\ #2}}}}
\newcommand{\sumst}[3]{\ensuremath{\sum_{\substack{#1 \\ #2 \\ #3}}}}
\newcommand{\Pf}{\mathbf{P}}
\newcommand{\Ef}{\mathbf{E}}

\newcommand{\tra}[1]{\,{\vphantom{#1}}^t{#1}}

\title{\textbf{Recurrence and transience of a multi-excited random walk on a regular tree}}

\author{
\normalsize{\textsc{Anne-Laure Basdevant}\footnote{Institut de
Mathématiques de Toulouse, France.} \;and \textsc{Arvind
Singh}\footnote{Institut für Mathematik, Universität Zürich,
Schweiz.}\mbox{ }\footnote{Research supported by the Swiss Science
Foundation, grant PDAM2-114536/1.}}}
\date{}
\maketitle \vspace{-1cm}

\vspace*{0.2cm}

\begin{abstract} We study a model of multi-excited random
walk on a regular tree which generalizes the models of the once
excited random walk and the digging random walk introduced by Volkov
(2003). We show the existence of a phase transition and provide a
criterion for the recurrence/transience property of the walk. In
particular, we prove that the asymptotic behaviour of the walk
depends on the order of the excitations, which contrasts with the
one dimensional setting studied by Zerner (2005). We also consider
the limiting speed of the walk in the transient regime and
conjecture that it is not a monotonic function of the environment.
\end{abstract}

\bigskip
{\small{
 \noindent{\bf Keywords. } Multi-excited random walk, self-interacting random walk, branching
 Markov chain.

\bigskip
\noindent{\bf A.M.S. Classification. } 60F20, 60K35, 60J80

\bigskip
\noindent{\bf e-mail. } anne-laure.basdevant@math.univ-toulouse.fr,
arvind.singh@math.unizh.ch}}

\section{Introduction}

The model of the excited random walk on $\Z^d$ was introduced by
Benjamini and Wilson in \cite{BenjaminiWilson03} and studied in
details in, for instance,
\cite{AmirBenjaminiKozma05,BerardRamirez07,Kozma03-Preprint,Kozma05-Preprint,VanderhofstadHolmes07,VanderhofstadHolmes08}.
Roughly speaking, it describes a walk which receives a push in some
specific direction each time it reaches a new vertex of $\Z^d$. Such
a walk is recurrent for $d=1$ and transient with linear speed for
$d\geq2$. In \cite{Zerner05,Zerner06}, Zerner introduced a generalization of
this model called multi-excited random walk (or cookie random walk)
where the walk receives a push, not only on its
first visit to a site, but also on some subsequent visits. This
model has received particular attention in the one-dimensional
setting (\emph{c.f.}
\cite{AntalRedner05,BasdevantSingh08a,BasdevantSingh08b,KosyginaZerner08,MountfordPimentelValle06}
and the references therein) and is relatively well understood. In
particular, a one-dimensional multi-excited random walk can be
recurrent or transient depending on the strength of the excitations
and may exhibit sub-linear growth in the transient regime.

Concerning multi-excited random walks in higher dimensions, not much
is known when one allows the excitations provided to the walk to
point in different directions. For instance, as remarked in
\cite{KosyginaZerner08}, for $d\geq 2$, when the
excitations of a $2$-cookies random walk push the walk in opposite
directions, then there is, so far, no known criterion for the
direction of transience. In this paper, we consider a similar model
where the state space of the walk is a regular tree and we allow the
excitations to point in opposite directions. Even in this
setting simpler than $\Z^d$, the walk exhibits a complicated phase
transition concerning its recurrence/transience behaviour.
\medskip

Let us be a bit more precise about the model. We consider a rooted
$b$-ary tree $\T$. At each vertex of the tree, we initially put a
pile of $M\geq 1$ "cookies" with ordered strengths $p_1,\ldots,p_M
\in [0,1)$. Let us also choose some other parameter $q\in(0,1)$
representing the bias of the walk after excitation. Then, a cookie
random walk on $\T$ is a nearest neighbor random walk $X =
(X_n)_{n\geq0}$, starting from the root of the tree and moving
according to the following rules:
\begin{itemize}
\item If $X_n = x$ and there remain the cookies with strengths
$p_j,p_{j+1},\ldots,p_M$ at this vertex, then $X$ eats the cookie
with attached strength $p_j$ and then jumps at time $n+1$ to the
father of $x$ with probability $1-p_j$ and to each son of $x$ with
probability $p_j/b$.
\item If $X_n = x$ and there is no remaining cookie at site $x$, then $X$
jumps at time $n+1$ to the father of $x$  with probability $1-q$ and
to each son of $x$ with probability $q/b$.
\end{itemize}

In particular, the bias provided to the height process $|X|$ upon
consuming a cookie of strength $p$ is $2p-1$. Therefore, a cookie
pushes the walk toward the root when $p<1/2$ and towards infinity
when $p>1/2$. The main question we address in this paper is to
investigate, whether $X$ is recurrent or transient \emph{i.e.} does
it return infinitely often to the origin or does it wander to
infinity.

For the one dimensional cookie random walk, a remarkably simple
criterion for the recurrence of the walk was obtained by Zerner
\cite{Zerner05} and generalized in \cite{KosyginaZerner08}.
This characterization shows that the behavior of the walks depends
only on the sum of the strengths of the cookies, but not on their
respective positions in the pile. However, in the tree setting
considered here, as in the multi-dimensional setting, the order of
the cookies does matter, meaning that inverting the position of two
cookies in the pile may affect the asymptotic behaviour of the walk.
We give here a criterion for recurrence from which we derive
explicit formulas for particular types of cookie environments.

\subsection{The model}

Let us now give a rigorous definition of the transition
probabilities of the walk and set some notations. In the remainder
of this paper, $\T$ will always denote a rooted $b$-ary tree with
$b\geq 2$. The root of the tree is denoted by $o$. Given $x\in\T$,
let $\pere{x}$ stand for the father of $x$ and
$\fils{x}{1},\fils{x}{2},\ldots,\fils{x}{b}$ stand for the sons of
$x$. We also use the notation $|x|$ to denote the height of a vertex
$x\in\T$. For convenience, we also add an additional edge from the
root to itself and adopt the convention that the father of the root
is the root itself ($\pere{o} = o$).

We call  cookie environment a vector $\mathcal{C} =
(p_1,p_2,\ldots,p_M\,;q) \in [0,1)^{M}\times (0,1)$, where $M\geq 1$
is the number of cookies. We put a semicolon before the last
component of the vector to emphasize  the particular role played by
$q$. A $\mathcal{C}$ multi-excited (or cookie) random walk is a
stochastic process $X = (X_n)_{n\geq 0}$ defined on some probability
space $(\Omega,\mathcal{F},\P)$, taking values in $\T$ with
transition probabilities given by
\begin{equation*}
\begin{aligned}
&\P\big\{ X_{0} = o \big\} = 1,\\
&\P\big\{ X_{n+1} = \filsex{X}{n}{i} \;|\;X_0,\ldots,X_n \big\} =
\left\{
\begin{array}{ll}
\frac{p_{j}}{b}&\hbox{if $j \leq M$,}\\
\frac{q}{b}&\hbox{if $j > M$,}\\
\end{array}
\right.\\
&\P\big\{ X_{n+1} = \pereex{X}{n} \;|\;X_0,\ldots,X_n \big\} =
\left\{
\begin{array}{ll} 1-p_{j} &\hbox{if $j \leq M$,}\\
1- q&\hbox{if $j > M$,}\\
\end{array}
\right.
\end{aligned}
\end{equation*}
where $i\in \{1,\ldots,b\}$ and $j\defeq\sharp\{0\leq k\leq n,
X_k=X_n\}$ is the number of previous visits of the walk to its
present position.

\begin{rem}
\begin{enumerate}
\item We do not allow $q=0$ in the definition of a cookie environment.
This assumption is made to insure that a $0-1$ law holds for the  walk.
 Yet, the method developed in this paper also enables to
treat the case $q=0$, \textit{c.f.} Remark \ref{sectqzzero}.
\item When $p_1 = p_2 = \ldots = p_M = q$, then $X$ is a classical random
walk on $\T$ and its height process is a drifted random walk on
$\Z$. Therefore, the walk is recurrent for $q \leq \frac{1}{2}$ and
transient for $q> \frac{1}{2}$. More generally, an easy coupling
argument shows that, when all the $p_i$'s and $q$ are smaller than
$\frac{1}{2}$ (resp. larger than $\frac{1}{2}$), the walk is
recurrent (resp. transient). The interesting cases occur when at
least one of the cookies pushes the walk in a direction opposite to
the bias $q$ of the walk after excitation.
\item This model was previously considered by Volkov \cite{Volkov03}
for the particular cookie environments:
\begin{enumerate}
\item $(p_1\,;\frac{b}{b+1})$ "once-excited random walk".
\item $(0,0\,;\frac{b}{b+1})$ "two-digging random walk".
\end{enumerate}
In both cases, Volkov proved that the walk is transient with linear
speed and conjectured that, more generally, any cookie random walk
which moves, after excitation, like a simple random walk on the tree
(\emph{i.e.} $q=b/(b+1)$) is transient. Theorem \ref{MainTheo} below
shows that such is indeed the case.
\end{enumerate}
\end{rem}

\begin{theo}[{\bf Recurrence/Transience criterion}] \label{MainTheo} \mbox{ }\\
Let $\mathcal{C} = (p_1,p_2, \ldots,p_M\,;q)$ be a cookie
environment and let $P(\mathcal{C})$ denote its associated cookie
environment matrix as in Definition \ref{defxip}. This matrix has
only a finite number of irreducible classes. Let
$\lambda(\mathcal{C})$ denote the largest spectral radius of theses
irreducible sub-matrices (in the sense of Definition \ref{defSpec}).
\begin{itemize}
\item[\textup{(a)}] If $q<\frac{b}{b+1}$ and $\lambda(\mathcal{C}) \leq \frac{1}{b}$, then the walk in the
cookie environment $\mathcal{C}$ is recurrent \emph{i.e.} it hits
any vertex of $\T$ infinitely often with probability $1$. Furthermore, if
$\lambda(\mathcal{C}) < \frac{1}{b}$, then the walk is positive
recurrent \emph{i.e.} all the return times to the root have finite
expectation.
\item[\textup{(b)}] If $q\geq\frac{b}{b+1}$ or $\lambda(\mathcal{C}) > \frac{1}{b}$, then the walk is
transient \emph{i.e.} $\lim_{n\to\infty}|X_n| = +\infty$.
\end{itemize}
Moreover, if $\tilde{\mathcal{C}} =
(\tilde{p}_1,\tilde{p_2},\ldots,\tilde{p}_M\,;\tilde{q})$ denotes
another cookie environment such that $\mathcal{C}\leq
\tilde{\mathcal{C}}$ for the canonical partial order, then the
$\tilde{\mathcal{C}}$ cookie random walk is transient whenever the
$\mathcal{C}$ cookie random walk is transient. Conversely, if the
$\tilde{\mathcal{C}}$ cookie random walk is recurrent, then so is
the $\mathcal{C}$ cookie random walk.
\end{theo}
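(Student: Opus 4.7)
The plan is to encode the visited vertices of the walk as a multi-type branching Markov chain and then invoke classical Perron-Frobenius theory for non-negative matrices. Assign to each vertex $x\neq o$ a type in $\{1,\ldots,M+1\}$, namely the index of the cookie consumed at $\pere{x}$ right before $X$ first jumped from $\pere{x}$ to $x$ (with type $M+1$ used when $\pere{x}$ had already been visited more than $M$ times at that moment). By the strong Markov property at the hitting time of $x$, together with the independence of the $b$ subtrees hanging below the sons of $x$, the random vector giving the types of the visited children of $x$ depends only on the type of $x$. Hence the set of visited vertices, labeled by type, is distributed as a multi-type Galton-Watson tree on the finite type set $\{1,\ldots,M+1\}$, and the walk is recurrent if and only if this tree is almost surely finite.

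The main computational step is to identify the mean matrix of this Galton-Watson tree with $b\cdot P(\mathcal{C})$, where $P(\mathcal{C})$ is the cookie environment matrix of Definition \ref{defxip}. The entry $P(\mathcal{C})_{i,j}$ arises from a single-vertex excursion analysis: conditional on the walk first arriving at a vertex in state $i$, one computes the probability that a fixed son is first visited at type $j$ before the walk leaves the vertex permanently. This is a finite Markov-chain computation on the cookie states of the vertex. Once this identification is established, Perron-Frobenius theory applied class by class to the reducible matrix $P(\mathcal{C})$ shows that the Galton-Watson tree is supercritical iff $b\cdot\lambda(\mathcal{C})>1$, giving case (b) when $\lambda(\mathcal{C})>1/b$, and strictly subcritical iff $\lambda(\mathcal{C})<1/b$, in which case the total progeny has finite expectation and the return times to $o$ are integrable, yielding positive recurrence. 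The additional condition $q<b/(b+1)$ in (a) excludes the regime where the terminal biased walk (after cookie exhaustion) already forces transience regardless of the cookies: the threshold $b/(b+1)$ coincides with the branching critical point for excursions into children of a pure biased walk on the $b$-ary tree.

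The main obstacle I anticipate is the critical case $\lambda(\mathcal{C})=1/b$, where null recurrence must be established. This amounts to showing almost sure extinction of a critical multi-type Galton-Watson tree with reducible mean matrix, combined with the divergence of its expected total progeny. Extinction should follow from a standard argument provided one verifies non-degeneracy of the offspring distributions in the critical irreducible class (immediate from the finite single-vertex excursion dynamics) and that no irreducible class with spectral radius strictly larger than $1/b$ is accessible from visited types. A separate argument is likely needed to confirm the infiniteness of expected return times at criticality, reflecting the divergence of the total progeny of a critical multi-type branching process.

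For the monotonicity statement, it suffices to prove the entry-wise inequality $P(\mathcal{C})_{i,j}\leq P(\tilde{\mathcal{C}})_{i,j}$ whenever $\mathcal{C}\leq\tilde{\mathcal{C}}$. This follows from a coupling of two single-vertex excursions driven by common uniform random variables, in which the walk governed by $\tilde{\mathcal{C}}$ pushes toward infinity at least as strongly as the walk governed by $\mathcal{C}$ at every step. Perron-Frobenius monotonicity then yields $\lambda(\mathcal{C})\leq\lambda(\tilde{\mathcal{C}})$, and parts (a) and (b) immediately deliver the two stated implications.
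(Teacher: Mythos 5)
Your proposed branching structure does not actually have the Markov property you need, and this gap propagates through the rest of the argument. You define the type of a vertex $x$ to be the cookie index at $\pere{x}$ when the walk first jumps from $\pere{x}$ to $x$, truncated at $M+1$. But the distribution of the types of the \emph{children} of $x$ is not determined by the type of $x$. After the walk first reaches $x$, it may jump back to $\pere{x}$ before visiting any child, and whether (and how often) it subsequently re-enters $\T^x$ depends on the cookie configuration and trajectory \emph{outside} $\T^x$, not on the type of $x$. Concretely, the offspring of $x$ depend on the total number of crossings of the edge $(\pere{x},x)$, which is an unbounded quantity that your finite type set does not record. This is exactly why the paper tracks $\ell(x)$, the full crossing count, and works with a branching Markov chain $L$ whose particles live in $\N$ (Definition \ref{defxip}, Lemma \ref{transL}) rather than in a finite set. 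Relatedly, the claimed identification of your mean matrix with $b\cdot P(\mathcal{C})$ cannot hold: $P(\mathcal{C})$ in Definition \ref{defxip} is an \emph{infinite} matrix $(p(i,j))_{i,j\geq 0}$, not an $(M+1)\times(M+1)$ matrix.

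The incorrectness of the proposed equivalence is visible in the case $p_1=\ldots=p_M=q$ with $q>\frac{b}{b+1}$: the paper notes $\lambda(\mathcal{C})=\frac{b(1-q)}{q}<\frac{1}{b}$ for $q$ near $1$, yet the walk is transient. In your framework, subcriticality of the branching process would force recurrence, so your criterion gives the wrong answer here. This is not a side issue: it is precisely why Theorem \ref{MainTheo}(b) has the disjunction $q\geq\frac{b}{b+1}$ \emph{or} $\lambda>\frac{1}{b}$, and the paper proves the first half of this disjunction (Proposition \ref{proptheo1D}) by a separate monotonicity/digging argument that does not fit into a single finite-type Perron--Frobenius calculation. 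Your remark that $q<\frac{b}{b+1}$ ``excludes the regime where the terminal biased walk already forces transience'' is a fair intuition but not an argument, and nothing in your setup delivers it. Finally, the statement ``the walk is recurrent if and only if the tree of visited vertices is a.s.\ finite'' is not usable as stated: the set of visited vertices is infinite in both the recurrent and the transient regime, so you need a stopped or truncated object along the lines of the paper's process $L^{N,k_0}$ constructed from the reflected walk. The paper's route---an infinite-type branching Markov chain, the tagged-particle chain $Z$ with transition matrix $P$, the decomposition into irreducible classes in Section \ref{sectionirredclasses}, and the survival/extinction dichotomy of Proposition \ref{equivalence}---is what makes the Perron--Frobenius machinery applicable while also accommodating the extra $q\geq\frac{b}{b+1}$ mechanism. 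Your coupling idea for monotonicity is in the right spirit (compare Proposition \ref{propmonoL2}), but it needs to be run on the correct, infinite-type process.
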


The matrix $P(\mathcal{C})$ of the theorem is explicit. Its
coefficients can be expressed as a rational function of the $p_i$'s
and $q$ and its irreducible classes are described in Section \ref{sectionirredclasses}.
However, we do not know, except in particular cases, a
simple formula for the spectral radius $\lambda(\mathcal{C})$.

\medskip

Let us stress that the condition $\lambda(\mathcal{C}) \leq
\frac{1}{b}$ does not, by itself, insure the recurrence of the walk.
Indeed, when $X$ a biased random walk on the tree ($p_1=\ldots=p_M =
q$), then $P(\mathcal{C})$ is the transition matrix of a Galton-Watson
process with geometric reproduction law with parameter
$\frac{q}{q+b(1-q)}$. According to \cite{SenetaVereJones66}, we have
\begin{equation*}
\lambda(\mathcal{C)} = \left\{
\begin{array}{ll}
\frac{q}{b(1-q)} & \hbox{for } q\leq \frac{b}{b+1},\\
\frac{b(1-q)}{q} & \hbox{for } q > \frac{b}{b+1}.
\end{array}
\right.
\end{equation*}
Therefore, for $q$ sufficiently close to $1$, the walk is transient
yet $\lambda(\mathcal{C)} < 1/b$.

\medskip

Let us also remark that the monotonicity property of the walk with
respect to the initial cookie environment stated in Theorem
\ref{MainTheo}, although being quite natural, is not straightforward
since there is no simple way to couple two walks with different
cookie environments (in fact, we suspect that such a coupling does
not exist in general, see the conjecture concerning the monotonicity
of the speed below).

\begin{theo}[{\bf Speed and CLT when $\mathbf{p_i>0}$}]\label{TheoSpeed}\mbox{ }\\
Let $\mathcal{C} = (p_1,p_2,\ldots,p_M\,;q)$ be a cookie environment
such that $p_i>0$ for all $i$. If the $\mathcal{C}$-cookie random
walk is transient, then it has a positive speed and a central limit
theorem holds: there exist deterministic $v = v(\mathcal{C}) >0$ and
$\sigma =\sigma(\mathcal{C}) >0$ such that
$$
\frac{|X_n|}{n}
\overset{\hbox{\tiny{a.s.}}}{\underset{n\to\infty}{\longrightarrow}}
v \quad \hbox{ and } \quad  \frac{|X_n| - nv}{\sqrt{n}}
\overset{\hbox{\tiny{law}}}{\underset{n\to\infty}{\longrightarrow}}
\mathcal{N}(0,\sigma^2).
$$
\end{theo}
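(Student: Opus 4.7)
\medskip
\noindent\textbf{Proof plan.}
The approach I would take is the classical \emph{regeneration time} construction, playing on a tree the role of the Sznitman--Zerner scheme on $\Z^d$. Call a time $n$ a \emph{regeneration time} of $X$ if $X_n$ is visited for the first time at time $n$ and if, for every $m>n$, $X_m$ belongs to the subtree rooted at $X_n$ (equivalently, $X_m\ne \pere{X_n}$). Because the walk is transient, $|X_n|\to\infty$ almost surely, and an elementary argument along the successive records of $|X_n|$ produces an infinite sequence of regeneration times $\tau_1<\tau_2<\cdots$, each finite almost surely. The crucial property is that the whole subtree rooted at $X_{\tau_k}$ has never been visited before $\tau_k$, so it carries a full pile of $M$ cookies at every site. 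The strong Markov property then shows that the shifted process $(X_{\tau_k+n})_{n\ge 0}$ is an independent copy of a $\mathcal{C}$-cookie walk started at the root of a fresh $b$-ary tree; consequently the increments
$$
\bigl(\tau_{k+1}-\tau_k,\ |X_{\tau_{k+1}}|-|X_{\tau_k}|\bigr),\qquad k\ge 1,
$$
form an i.i.d.\ sequence distributed as $(\tau_1,|X_{\tau_1}|)$.

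\medskip
Once this renewal structure is available, the two conclusions of the theorem are almost mechanical. Applying the strong law of large numbers to $(\tau_k,|X_{\tau_k}|)$ gives
$$
\frac{|X_n|}{n}\ \xrightarrow[n\to\infty]{\text{a.s.}}\ v\;=\;\frac{\E[|X_{\tau_1}|]}{\E[\tau_1]},
$$
which is strictly positive because $|X_{\tau_1}|\ge 1$ almost surely. For the CLT one combines the CLT for the partial sums $\sum_{k=1}^{N}\bigl(|X_{\tau_{k+1}}|-|X_{\tau_k}|-v(\tau_{k+1}-\tau_k)\bigr)$ with an Anscombe-type time-change to transfer fluctuations from the regeneration scale to the time scale, yielding Gaussian fluctuations of $|X_n|-nv$ of order $\sqrt{n}$ with
$$
\sigma^{2}\;=\;\frac{\mathrm{Var}\bigl(|X_{\tau_1}|-v\,\tau_1\bigr)}{\E[\tau_1]}.
$$
Non-degeneracy $\sigma^{2}>0$ follows because the joint law of $(\tau_1,|X_{\tau_1}|)$ is genuinely non-degenerate: already from the root, the walk can achieve regenerations via paths of different shapes and durations, each with positive probability.

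\medskip
\noindent\textbf{Main obstacle.}
The entire content of the theorem is really the integrability statement $\E[\tau_1^{\,2}]<\infty$, which automatically implies $\E[|X_{\tau_1}|^{2}]<\infty$ since $|X_{\tau_1}|\le \tau_1$. This is the place where the hypothesis $p_i>0$ for every $i$ is essential. Indeed, a regeneration at the first visit to $X_\tau$ requires that at each of the first $M$ visits to that vertex the walk moves downward, and this has positive probability exactly because every $p_i$ is strictly positive; similarly, having bounded-time escapes from any current state requires that the walk can always descend when a cookie is being eaten. The plan is to establish an exponential tail for $\tau_1$ by proving a uniform lower bound of the form
$$
\P\bigl\{\tau_1\le n+L\,\bigm|\,\mathcal{F}_n,\ \tau_1>n\bigr\}\ge c>0
$$
for some $L=L(\mathcal{C})$ and $c=c(\mathcal{C})>0$ independent of $n$. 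The construction of the favorable event is easy in the regime $q\ge \frac{b}{b+1}$, where the post-excitation walk is itself transient. The delicate case is $q<\frac{b}{b+1}$, where transience is \emph{created} by the cookies: here excursions away from fresh vertices are governed by the matrix $P(\mathcal{C})$, and the desired exponential decay of their length must be extracted from the spectral estimate $\lambda(\mathcal{C})>\frac{1}{b}$ provided by Theorem~\ref{MainTheo}. Pushing this spectral control down to uniform tail estimates on $\tau_1$ is the real work of the proof; positivity of the $p_i$'s is what makes the resulting bound genuinely uniform and not spoiled by deterministic bounce-backs.
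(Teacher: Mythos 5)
Your high-level framework matches the paper's: cut (regeneration) times along fresh subtrees, i.i.d.\ increments under the appropriate conditional law, and the reduction of both the LLN and the CLT to the integrability of the first regeneration time. You also correctly identify $\E[\tau_1^2]<\infty$ as the crux and correctly flag that $p_i>0$ for all $i$ is the hypothesis that makes it work. Up to the minor imprecision that the increments $(\tau_{k+1}-\tau_k,\,|X_{\tau_{k+1}}|-|X_{\tau_k}|)$ are distributed as $(\tau_1,|X_{\tau_1}|)$ \emph{conditioned on never backtracking at the root}, rather than unconditionally, everything before ``Main obstacle'' is sound.

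However, the mechanism you propose for proving integrability is not viable. The uniform lower bound
$\P\{\tau_1\le n+L\mid \mathcal{F}_n,\ \tau_1>n\}\ge c>0$
with $L$ and $c$ independent of $n$ cannot hold: a regeneration requires the walk to reach a never-visited vertex, and at time $n$ the walk may well be deep inside the region it has already explored, at graph distance $R$ from the frontier of its range, with $R$ as large as one likes given $\mathcal{F}_n$. In that configuration no regeneration can occur before time $n+R$, so no deterministic $L$ works. (Separately, the conditioning on $\{\tau_1>n\}$ together with $\mathcal{F}_n$ is ill-posed, since $\{\tau_1>n\}$ is not $\mathcal{F}_n$-measurable, but the bound fails even under the charitable reading.) This is precisely why the paper does not claim an exponential tail for $C_1$; it proves only $\E[C_1^{\beta}]<\infty$ for every $\beta>0$. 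The paper's route is genuinely different at this step: it controls the \emph{excursion lengths} between returns to the frontier via the branching Markov chain $L$ (Lemma~\ref{LemmSpeed1}: a stretched-exponential tail $c_1e^{-n^{1/3}}$ on the total progeny $U$, using the spectral estimate $\lambda>1/b$ together with the supermartingale/seed argument), combines this with a geometric bound on the \emph{number} of excursions, and separately controls the \emph{time to visit new sites} (Lemma~\ref{LemmSpeed2}: $\P\{\gamma_n>n^\nu\}\le c_2e^{-n}$, proved by a coupling that crucially uses $\min_i p_i>0$ to embed an i.i.d.\ descent sequence and count distinct increasing sub-paths). You would need to replace your uniform-in-history bound by some version of this two-scale tail estimate to make the argument go through.
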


The assumption that all cookies have positive strength cannot be
removed. When some cookies have zero strength, it is possible to
construct a transient walk with sub-linear growth, \textit{c.f.}
Proposition \ref{PropVit0}.

A natural question to address is the monotonicity of the speed. It
is known that the speed of a one-dimensional cookie random walk is
non decreasing with respect to the cookie environment. However,
numerical simulations suggest that such is not the case for
the model considered here (\textit{c.f.} Figure \ref{figspeed}). We
believe this behaviour to be somewhat similar to that observed for a
biased random walk on a Galton-Watson tree: the slowdown of the walk
is due to the creation of "traps" where the walk spends a long time.
When $p_2 = 0$, this is easily understood by the following heuristic
argument: the walk returns to each visited site at
least once (except on the boundary of its trace) and the length of an excursion of the walk away from the
set of vertices it has already visited is a geometric random
variable with parameter $p_1$ (the first time the walk moves a step
towards the root, it moves back all the way until it reaches a
vertex visited at least twice). Therefore, as $p_1$ increases to
$1$, the expectation of the length of theses excursions goes to
infinity so we can expect the speed of the walk to go to $0$. What
we find more surprising is that this slowdown also seems to hold
true, to some extend, when $p_2$ is not zero, contrarily to the conjecture that
the speed of a biased random walk on a Galton-Watson tree with no leaf is monotonic, \emph{c.f.} Question 2.1 of \cite{LyonsPemantlePeres97}.

\begin{filecontents}{data1.dat}
0.00   0.12374 0.01   0.12439 0.02   0.125 0.03   0.12564 0.04
0.12628 0.05   0.12686 0.06   0.1275 0.07   0.12815 0.08   0.12875
0.09   0.12936 0.10   0.12996 0.11   0.13059 0.12   0.13121 0.13
0.1318 0.14   0.13243 0.15   0.13306 0.16   0.13363 0.17   0.13424
0.18   0.13483 0.19   0.13541 0.20   0.13598 0.21   0.13658 0.22
0.13715 0.23   0.13772 0.24   0.13826 0.25   0.13882 0.26   0.13934
0.27   0.13989 0.28   0.14044 0.29   0.14091 0.30   0.14142 0.31
0.14192 0.32   0.14242 0.33   0.14287 0.34   0.14328 0.35   0.14377
0.36   0.14415 0.37   0.14459 0.38   0.14499 0.39   0.14538 0.40
0.14576 0.41   0.1461 0.42   0.14637 0.43   0.14666 0.44   0.14696
0.45   0.1472 0.46   0.1474 0.47   0.14759 0.48   0.14777 0.49
0.14795 0.50   0.14799 0.51   0.14808 0.52   0.14812 0.53   0.14817
0.54   0.14811 0.55   0.14804 0.56   0.14795 0.57   0.14774 0.58
0.14753 0.59   0.1473 0.60   0.14694 0.61   0.14661 0.62   0.14616
0.63   0.14576 0.64   0.14521 0.65   0.14458 0.66   0.14395 0.67
0.14324 0.68   0.14246 0.69   0.14156 0.70   0.14064 0.71   0.13962
0.72   0.13855 0.73   0.13728 0.74   0.1361 0.75   0.13475 0.76
0.13336 0.77   0.13187 0.78   0.13023 0.79   0.12851 0.80   0.12677
0.81   0.125 0.82   0.12304 0.83   0.12122 0.84   0.1192 0.85 0.1173
0.86   0.11523 0.87   0.1133 0.88   0.11158 0.89   0.11022 0.90
0.10913 0.91   0.1084 0.92   0.10874 0.93   0.11041 0.94 0.11402
0.95   0.12099 0.96   0.13359 0.97   0.15767 0.98   0.20667 0.99
0.33294
\end{filecontents}
\begin{figure}
\begin{center}
\psset{xunit=6cm, yunit=6cm}
\begin{pspicture}[](0,-0.05)(1,1)
    \psaxes[Dx=0.2,Dy=0.2,tickstyle=bottom]{->}(0,0)(0,0)(1.05,1.05)
    \uput[0](1.05,0){$p_1$}
    \uput[90](0,1.05){$v(p_1)$}
    \psline[linecolor=black,linestyle=dashed,linewidth=0.3pt](0,1)(1,1)
    \psline[linecolor=black,linestyle=dashed,linewidth=0.3pt](1,0)(1,1)
    \psline[linecolor=red,linewidth=1pt](0.99,0.33294)(1,1)
    \fileplot[linewidth=1pt,linecolor=red]{data1.dat}
\end{pspicture}
\caption{\label{figspeed}Speed of a $(p_1,0.01\,;0.95)$ cookie
random walk on a binary tree obtained by Monte Carlo simulation.}
\end{center}
\end{figure}
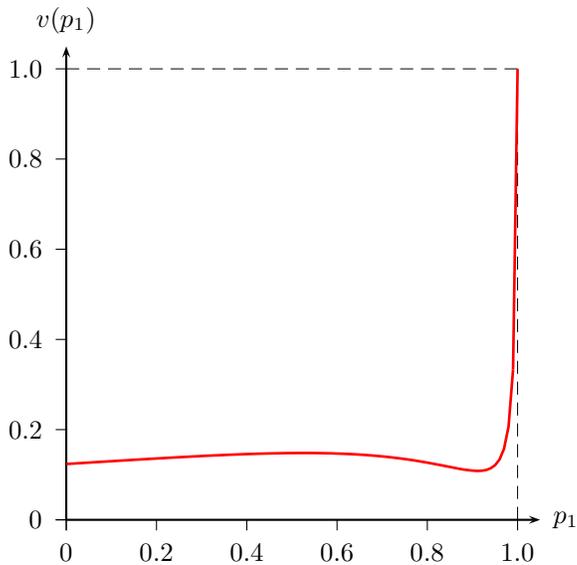

\subsection{Special cookie environments}

The value of the critical parameter $\lambda(\mathcal{C})$ can be
explicitly computed in some cases of interest.

\begin{theo}\label{TheoNcook}
Let $\mathcal{C} = (p_1,\ldots,p_M\,; q)$ denote a cookie
environment such that
\begin{equation}\label{hypo0}
p_i = 0 \quad\hbox{ for all $i \leq \lfloor M/2 \rfloor$}
\end{equation}
where $\lfloor x \rfloor$ denotes the integer part of $x$.  Define
\begin{equation*}
\lambda_{\hbox{\tiny{sym}}}(\mathcal{C}) \defeq \frac{q}{b(1-q)}
\prod_{i=1}^M\left((1-p_i)\left(\frac{q}{b(1-q)}\right)
+\frac{(b-1)p_i}{b}+\frac{p_i}{b}\left(\frac{q}{b(1-q)}\right)^{-1}\right).
\end{equation*}
For $q < \frac{b}{b+1}$, it holds that
\begin{equation*}
\lambda(\mathcal{C}) = \lambda_{\hbox{\tiny{sym}}}(\mathcal{C}).
\end{equation*}
\end{theo}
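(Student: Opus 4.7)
My plan is to compute $\lambda(\mathcal{C})$ explicitly by constructing a positive right eigenvector of a well-chosen irreducible block of the cookie environment matrix $P(\mathcal{C})$, and then showing that no other block has a larger spectral radius. Throughout, let $r \defeq \frac{q}{b(1-q)}$, so that $q<\frac{b}{b+1}$ is equivalent to $r<1$ and one can rewrite
$$
\lambda_{\text{sym}}(\mathcal{C}) \;=\; r \prod_{i=1}^M\Big((1-p_i)\,r \;+\; \tfrac{b-1}{b}\, p_i \;+\; \tfrac{p_i}{b\,r}\Big).
$$
The product form strongly suggests an eigenvector of multiplicative type with one factor per cookie, and the leading $r$ matches the spectral radius of the purely biased (unexcited) walk, already identified in the remark after Theorem \ref{MainTheo}.

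First I would recall from Section \ref{sectionirredclasses} the combinatorial description of the irreducible classes of $P(\mathcal{C})$. Under hypothesis (\ref{hypo0}) that $p_i=0$ for $i\leq \lfloor M/2\rfloor$, any visit to a newly discovered vertex is bounced back to the parent for the first $\lfloor M/2 \rfloor$ times, so no cookie of positive strength can direct a visit into one specific son before a compulsory symmetric phase is completed. This pins down one particular irreducible class — the \emph{symmetric} one — in which the excited transitions into the $b$ sons are distributed as evenly as possible, and shows that this class is stable under the dynamics.

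Next I would solve the eigenvalue equation $P(\mathcal{C})v = \lambda v$ on the symmetric class by the ansatz $v(x)=r^{\delta(x)}$, where $\delta(x)$ is an integer-valued functional of the state recording the signed imbalance between a chosen distinguished son and the remaining $b-1$ sons. Each cookie step contributes, via the local transition probabilities, the expected weight $(1-p_i)r + \tfrac{b-1}{b}p_i + \tfrac{p_i}{br}$: going to the parent scales $v$ by $r$, going to one of the $b-1$ symmetric sons leaves $\delta$ unchanged, and going to the distinguished son scales $v$ by $r^{-1}$. Chaining these factors over $i=1,\ldots,M$ and multiplying by the post-cookie factor $r$ produces a strictly positive eigenvector with eigenvalue exactly $\lambda_{\text{sym}}(\mathcal{C})$. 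The Seneta--Vere-Jones characterization (already invoked in the paper) then identifies $\lambda_{\text{sym}}(\mathcal{C})$ with the spectral radius of the restriction of $P(\mathcal{C})$ to this class.

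The remaining, and I expect \emph{main, obstacle} is the reverse inequality: no other irreducible class of $P(\mathcal{C})$ has spectral radius strictly exceeding $\lambda_{\text{sym}}(\mathcal{C})$. Intuitively, an asymmetric class would correspond to the walk committing more visits to one son than to another during the cookie phase, but hypothesis (\ref{hypo0}) prevents such early commitments by forcing the first half of the cookies to push strictly back toward the parent. To make this rigorous, I would either construct, for each asymmetric class, a comparison matrix whose spectral radius dominates it and is still bounded above by $\lambda_{\text{sym}}(\mathcal{C})$, or exhibit a positive supersolution $w$ satisfying $P(\mathcal{C})w \leq \lambda_{\text{sym}}(\mathcal{C})\, w$ on the full state space, which again via Seneta--Vere-Jones upper-bounds every irreducible spectral radius. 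Turning the symmetry intuition into one of these monotonicity arguments will require careful bookkeeping of the cookie-consumption order, and is precisely the step where hypothesis (\ref{hypo0}) is essential — without it, asymmetric classes are expected to dominate and the formula $\lambda = \lambda_{\text{sym}}$ to fail.
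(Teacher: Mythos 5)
The proposal rests on a misconception of what the matrix $P(\mathcal{C})$ is. As defined in Definition \ref{defxip}, $P$ is indexed by $\N\times\N$: its state space is the set of possible local times $\ell(x)$ (numbers of crossings of an edge), and the Markov chain $Z$ with transition matrix $P$ tracks how this local time evolves from a vertex to its son. Your ansatz $v(x)=r^{\delta(x)}$, with $\delta$ a signed imbalance between a distinguished son and the remaining $b-1$ sons, does not match this state space: $P$ acts on $\N$, there is no imbalance functional on $\N$, and the three-term weight $(1-p_i)r + (b-1)p_i/b + p_i/(br)$ is not a per-cookie one-step transition weight for $P$. The paper's eigenvector is the \emph{left} eigenvector $Y_i = r^{i-1}$, an exponential in the local time $i$; verifying $\tra{Y}P_K = \lambda_{\hbox{\tiny{sym}}}\tra{Y}$ requires the combinatorial decomposition of Lemmas \ref{coefP} and \ref{ppuis}, splitting the transition $p(i,j)$ according to the number of $0$'s and $1$'s among $(\xi_1,\ldots,\xi_M)$ and among the geometric post-cookie phase. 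The per-cookie factors of $\lambda_{\hbox{\tiny{sym}}}$ then emerge by expanding the generating polynomial of $(\xi_1,\ldots,\xi_M)$ (equation (\ref{cx})), not by tracking imbalances among sons of a tree vertex.

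Your plan for the ``reverse inequality''--- that no other irreducible class of $P$ dominates --- is also aimed in the wrong direction. Under hypothesis (\ref{hypo0}) the paper shows that $p(i,i)=0$ for all $1\leq i\leq M_0$, so the \emph{only} nontrivial irreducible class is the infinite one $[M_0+1,\infty)$: there are no competing classes to rule out. What is actually needed, and what the proposal omits, are (i) Lemma \ref{Ajnul}, which shows the correction term $A(j)$ vanishes for $j\geq M-M_0$, so that the exponential vector is an \emph{exact} left eigenvector on the infinite class, and (ii) Proposition \ref{approxlambda}, an approximation-by-finite-submatrices argument, to conclude that the corresponding eigenvalue is in fact the spectral radius. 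Step (ii) is not cosmetic: for infinite irreducible nonnegative matrices, possession of a positive eigenvector does not by itself pin down the convergence parameter, and the Seneta--Vere-Jones criterion only bounds the spectral radius from one side, so the finite-truncation argument (using $q<b/(b+1)$, i.e.\ $s<1/2$, to control the tail) is essential.
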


\begin{rem}\label{rem1}
For any cookie environment, we have $\lambda(\mathcal{C}) \leq 1$
(it is the maximal spectral radius of sub-stochastic matrices).
Moreover, when $\lfloor M/2 \rfloor$ cookies have strength $0$, the
function $q \mapsto \lambda_{\hbox{\tiny{sym}}}(p_1,\ldots,p_M\,;
q)$ is strictly increasing and
$\lambda_{\hbox{\tiny{sym}}}(p_1,\ldots,p_M\,;\frac{b}{b+1}) = 1$.
Thus, $\lambda(\mathcal{C}) \leq 1 <
\lambda_{\hbox{\tiny{sym}}}(\mathcal{C})$ for all $q >
\frac{b}{b+1}$.
\end{rem}

Let us also note that, under Assumption $(\ref{hypo0})$, in order to
reach some vertex $x$, the walk has to visit every vertex on the
path $[o,\pere{x})$ at least $M$ times. Therefore, for such a walk,
except on the boundary of its trace, every vertex of the tree is
visited either $0$ or more than $M$ times. This justifies
$\lambda(\mathcal{C})$ being, in this case, a symmetric function of the
$p_i$'s.

\medskip

The combination of Theorem \ref{MainTheo}, Theorem \ref{TheoNcook}
and Remark \ref{rem1} directly yields particularly simple criterions
for the model of the once excited and the digging random walk.

\begin{figure}
\begin{center}
\psset{xunit=6cm, yunit=6cm}
\begin{pspicture}[](0,-0.05)(1,1)
    \psaxes[Dx=0.5,Dy=0.5,tickstyle=bottom]{->}(0,0)(0,0)(1.05,1.05)
    \uput[0](1.05,0){$q$}
    \uput[90](0,1.05){$p$}
    \uput[90](0.61,-0.08){$\scriptstyle{2-\sqrt{2}}$}
    \psline[linecolor=black,linewidth=0.5pt](0,1)(1,1)
    \psline[linecolor=black,linewidth=0.5pt](1,0)(1,1)
    \psline[linecolor=black,linestyle=dashed,linewidth=0.4pt](0.5,0)(0.5,1)
    \psline[linecolor=black,linestyle=dashed,linewidth=0.4pt](0,0.5)(1,0.5)
    \uput[90](0.25,0.35){\large{Recurrence}}
    \uput[90](0.75,0.55){\large{Transience}}
    \psplot[linecolor=red, linewidth=1pt]{0}{0.5858}{
    4 x mul 2 sub x 2 exp sub
    3 x mul 2 sub
    div
    }
\end{pspicture}
\caption{\label{fig1exc}Phase transition of a $(p\,;q)$ cookie
random walk on a binary tree.}
\end{center}
\end{figure}
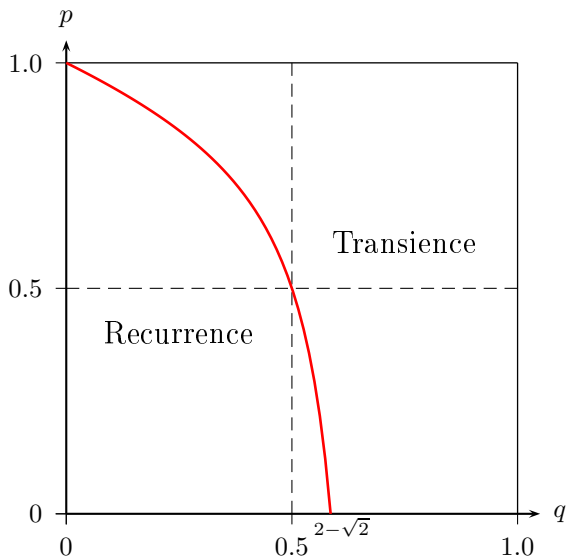

\begin{cor}[{\bf Once excited random walk}]\label{CorOnceExc}\mbox{ }\\
Let $X$ denote a $(p\,; q)$ cookie random walk (\textit{i.e.} $M=1$)
and define
\begin{equation*}
\lambda_{1} \defeq
(1-p)\left(\frac{q}{b(1-q)}\right)^2+\frac{(b-1)p}{b}\left(\frac{q}{b(1-q)}\right)+\frac{p}{b}.
\end{equation*}
Then $X$ is recurrent if and only if $\lambda_1 \leq 1/b$.
\end{cor}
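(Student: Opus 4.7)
The plan is to derive the corollary directly from Theorem \ref{MainTheo}, Theorem \ref{TheoNcook} and Remark \ref{rem1}, since the $M=1$ case is a degenerate instance of the ``$\lfloor M/2\rfloor$ zero cookies'' hypothesis. First I would note that condition (\ref{hypo0}) is vacuous when $M=1$ (there is no index $i\leq 0$), so Theorem \ref{TheoNcook} applies to every $(p\,;q)$ cookie environment with $q<\frac{b}{b+1}$. A direct substitution of $M=1$ into the product defining $\lambda_{\hbox{\tiny{sym}}}$, after factoring out the common quantity $r=\frac{q}{b(1-q)}$, yields
\begin{equation*}
\lambda_{\hbox{\tiny{sym}}}(p\,;q)=(1-p)r^{2}+\frac{(b-1)p}{b}r+\frac{p}{b}=\lambda_{1},
\end{equation*}
so the critical parameter provided by Theorem \ref{TheoNcook} coincides with $\lambda_{1}$.

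Next I would split the analysis according to the sign of $q-\frac{b}{b+1}$. For $q<\frac{b}{b+1}$, Theorem \ref{MainTheo} together with the identification $\lambda(\mathcal{C})=\lambda_{1}$ gives exactly the claimed equivalence: the walk is recurrent iff $\lambda_{1}\leq 1/b$. For $q\geq\frac{b}{b+1}$, Theorem \ref{MainTheo}(b) guarantees transience, and one has to check that $\lambda_{1}>1/b$ in this regime so that both sides of the corollary's ``iff'' agree. This is where Remark \ref{rem1} enters: evaluating at $q=\frac{b}{b+1}$ gives $r=1$ and hence $\lambda_{1}=1$, while the explicit expression shows that $\lambda_{1}$ is strictly increasing in $r$ (its derivative $2(1-p)r+\frac{(b-1)p}{b}$ is positive for $p<1$, $r>0$), and $r$ is strictly increasing in $q$. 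Therefore $\lambda_{1}\geq 1>1/b$ whenever $q\geq\frac{b}{b+1}$, which closes the ``only if'' direction in this regime.

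There is essentially no technical obstacle: the corollary is a bookkeeping exercise once Theorems \ref{MainTheo} and \ref{TheoNcook} are in hand. The only minor point that needs care is the book-keeping at the boundary $q=\frac{b}{b+1}$, where Theorem \ref{TheoNcook} does not directly apply; here the role of Remark \ref{rem1} is to show that $\lambda_{1}$ itself already exceeds $1/b$, so that the formulation of the corollary remains consistent without having to invoke $\lambda(\mathcal{C})$ on that side.
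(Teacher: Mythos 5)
Your proposal is correct and follows exactly the route the paper intends: the text immediately preceding Corollary \ref{CorOnceExc} explicitly states that it is obtained by ``the combination of Theorem \ref{MainTheo}, Theorem \ref{TheoNcook} and Remark \ref{rem1}'', which is precisely the bookkeeping you carry out (vacuousness of hypothesis (\ref{hypo0}) for $M=1$, the identification $\lambda_{\hbox{\tiny{sym}}}=\lambda_1$ after factoring out $r=q/(b(1-q))$, and the monotonicity of $\lambda_1$ in $r$ to dispose of the regime $q\ge b/(b+1)$). The computation at $q=b/(b+1)$ giving $r=1$ and $\lambda_1=1$ is the same check Remark \ref{rem1} encodes, so there is no gap.
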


In particular, the phase transition of the once excited random walk
is non trivial in both cases $p<\frac{1}{2}<q$ and $q<\frac{1}{2}<p$
(\textit{c.f.} Figure \ref{fig1exc}).

\medskip

\begin{cor}[{\bf $\mathbf{M}$-digging random walk}]\label{CorDigging}\mbox{ }\\
Let $X$ denote a $\mathcal{C} =
(\underbrace{0,\ldots,0}_{\hbox{\tiny{M times}}}\,; q)$ cookie
random walk and define
\begin{equation*}
\lambda_{\hbox{\tiny{dig}}} \defeq
\left(\frac{q}{b(1-q)}\right)^{M+1}.
\end{equation*}
Then $X$ is recurrent if and only if $\lambda_{\hbox{\tiny{dig}}}
\leq 1/b$.
\end{cor}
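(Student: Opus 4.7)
The plan is to derive Corollary \ref{CorDigging} as a direct specialization of Theorem \ref{TheoNcook}, and then feed the resulting value of $\lambda(\mathcal{C})$ into Theorem \ref{MainTheo}. First, I would observe that the digging environment $\mathcal{C} = (0,\ldots,0\,;q)$ trivially satisfies Assumption $(\ref{hypo0})$, since \emph{every} cookie has strength zero (in particular the first $\lfloor M/2 \rfloor$ do). Hence Theorem \ref{TheoNcook} is applicable whenever $q < \frac{b}{b+1}$.

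Next, I would evaluate the formula defining $\lambda_{\hbox{\tiny{sym}}}$ with $p_1=\cdots=p_M=0$. The two terms carrying a factor $p_i$ drop out, and each of the $M$ factors in the product collapses to $\frac{q}{b(1-q)}$. This leaves
\begin{equation*}
\lambda_{\hbox{\tiny{sym}}}(\mathcal{C}) = \frac{q}{b(1-q)}\cdot\left(\frac{q}{b(1-q)}\right)^{M} = \left(\frac{q}{b(1-q)}\right)^{M+1} = \lambda_{\hbox{\tiny{dig}}},
\end{equation*}
so Theorem \ref{TheoNcook} yields $\lambda(\mathcal{C}) = \lambda_{\hbox{\tiny{dig}}}$ for $q<\frac{b}{b+1}$. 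Plugging into Theorem \ref{MainTheo}, the walk is recurrent iff $\lambda_{\hbox{\tiny{dig}}} \leq \frac{1}{b}$ in this regime.

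It remains to handle the regime $q \geq \frac{b}{b+1}$, where Theorem \ref{TheoNcook} does not directly apply. Here Theorem \ref{MainTheo}(b) already gives transience, so the only thing to check is that the advertised criterion $\lambda_{\hbox{\tiny{dig}}} \leq \frac{1}{b}$ also fails in this regime, ensuring the two statements agree. But this is immediate: when $q\geq \frac{b}{b+1}$ we have $\frac{q}{b(1-q)}\geq 1$, hence $\lambda_{\hbox{\tiny{dig}}} \geq 1 > \frac{1}{b}$. Equivalently, the inequality $\lambda_{\hbox{\tiny{dig}}} \leq \frac{1}{b}$ automatically forces $q<\frac{b}{b+1}$, so the dichotomy of Theorem \ref{MainTheo} collapses to a single "iff" statement.

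There is no real obstacle in this argument: the substantive work has already been done in Theorems \ref{MainTheo} and \ref{TheoNcook}, and the proof reduces to an algebraic simplification of $\lambda_{\hbox{\tiny{sym}}}$ plus a routine sanity check at the boundary $q=\frac{b}{b+1}$. The only point requiring a moment of care is precisely this boundary compatibility, to avoid having to invoke Theorem \ref{TheoNcook} outside its stated range of validity.
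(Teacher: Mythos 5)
Your proof is correct and follows exactly the route the paper indicates: specialize $\lambda_{\hbox{\tiny{sym}}}$ from Theorem \ref{TheoNcook} to the all-zero cookie vector, plug the result into Theorem \ref{MainTheo}, and verify at the boundary $q\geq\frac{b}{b+1}$ that $\lambda_{\hbox{\tiny{dig}}}\geq 1>\frac{1}{b}$ so the dichotomy collapses to a single equivalence. Your direct algebraic check in the regime $q\geq\frac{b}{b+1}$ is a slightly more elementary version of what the paper packages in Remark \ref{rem1} (monotonicity of $q\mapsto\lambda_{\hbox{\tiny{sym}}}$ together with $\lambda(\mathcal{C})\leq 1$), but the two are equivalent here.
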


Recall that, according to  Theorem \ref{MainTheo}, the condition
$q\geq b/(b+1)$ is sufficient to insure the transience of the walk.
Corollary \ref{CorDigging} shows that this condition is also
necessary to insure transience independently of $p_1,\ldots,p_M$:
for any $q < b/(b+1)$, the $M$ digging random walk is recurrent when
$M$ is chosen large enough.

\bigskip

We now consider another class of cookie environment to show that,
contrarily to the one dimensional case, the order of the cookies in
the pile does matter in general.

\begin{prop}\label{Prop4Cook}
Let $X$ be a $\mathcal{C} =
(p_1,p_2,\underbrace{0,\ldots,0}_{\hbox{\tiny{K times}}}\,; q)$
cookie random walk with $K\geq 2$. Define $\nu(p_1,p_2)$ to be the
largest positive eigenvalue of the matrix
\begin{equation*}
\begin{pmatrix}
    \frac{p_1}{b} +\frac{p_1 p_2}{b} -\frac{2 p_1 p_2}{b^2} & \frac{p_1 p_2}{b^2} \\
    \frac{p_1 + p_2}{b} -\frac{2 p_1 p_2}{b^2} & \frac{p_1 p_2}{b^2}  \\
\end{pmatrix},
\end{equation*}
namely
\begin{equation*}
\nu(p_1,p_2)=\frac{1}{2b^2}\left((b\!-\!1)p_1p_2+bp_1+\sqrt{(b^2\!-\!6b\!+\!1)p_1^2p_2^2+2b(b\!-\!1)p_1^2p_2+b^2p_1^2+4bp_1p_2^2}\right).
\end{equation*}
Recall the definition of
$\lambda_{\hbox{\textup{\tiny{sym}}}}(\mathcal{C})$ given in Theorem
\ref{TheoNcook} and set
\begin{equation*} \tilde{\lambda} \;=\; \max\left(
\lambda_{\hbox{\textup{\tiny{sym}}}}(\mathcal{C}) ,
\nu(p_1,p_2)\right).
\end{equation*}
The walk $X$ is recurrent if and only if $\tilde{\lambda} \leq
\frac{1}{b}$.
\end{prop}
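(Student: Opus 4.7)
The plan is to apply Theorem \ref{MainTheo} by computing $\lambda(\mathcal{C})$ for the specific environment $\mathcal{C}=(p_1,p_2,0,\ldots,0\,;q)$. First I would unpack Definition \ref{defxip} to write out the cookie environment matrix $P(\mathcal{C})$ explicitly. The key structural observation is that each of the $K$ trailing zero-strength cookies acts as a deterministic reflector: once the walk has returned to a vertex for the third (or later) time, it is sent back to the parent with probability one. Consequently the dynamics at a vertex bifurcate into two distinct regimes, either the walk leaves the vertex permanently after consuming only one or two non-zero cookies, or it eventually returns often enough to consume all $M=K+2$ cookies and thereafter behaves like a $q$-biased walker.

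This bifurcation translates into two candidate irreducible classes of $P(\mathcal{C})$, whose spectral radii I would then compute separately. The \emph{low-consumption} class keeps track of the walk's first one or two excursions out of a freshly visited vertex; because the zero-strength cookies never come into play here, the associated sub-matrix depends only on $p_1,p_2$ and $b$, and I expect it to coincide with the $2\times2$ matrix displayed in the statement. The \emph{full-consumption} class, in which all $M$ cookies have been eaten before the last departure, is symmetric in the $p_i$'s by the same token as in Theorem \ref{TheoNcook} (all cookies get consumed, so their order is immaterial), and its spectral radius is $\lambda_{\hbox{\tiny sym}}(\mathcal{C})$.

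Once the decomposition is established the computations are routine: for the low-consumption class, the characteristic polynomial of the $2\times 2$ matrix is quadratic and its largest root is the closed-form expression $\nu(p_1,p_2)$ given in the statement; for the full-consumption class, repeating the argument used for Theorem \ref{TheoNcook} yields $\lambda_{\hbox{\tiny sym}}(\mathcal{C})$. Taking the maximum gives $\lambda(\mathcal{C})=\tilde{\lambda}$, and Theorem \ref{MainTheo} furnishes the recurrence/transience dichotomy for $q<b/(b+1)$. For $q\geq b/(b+1)$ we get transience directly from Theorem \ref{MainTheo}, and this is consistent with the criterion because Remark \ref{rem1} (which applies here since $K\geq 2$ means $\lfloor M/2\rfloor\geq 2$ cookies have strength $0$) guarantees $\tilde{\lambda}\geq\lambda_{\hbox{\tiny sym}}(\mathcal{C})>1/b$ in that regime.

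The main obstacle is the combinatorial bookkeeping needed to show that $P(\mathcal{C})$ has exactly these two maximal irreducible classes and, crucially, that the low-consumption sub-matrix is \emph{precisely} the displayed $2\times 2$ matrix (and not merely a matrix with the same spectrum). This requires a careful analysis of which indices in the enumeration underlying Definition \ref{defxip} communicate under the specific cookie structure $(p_1,p_2,0,\ldots,0\,;q)$, isolating the states that correspond to excursions produced before the third visit from those produced afterwards. Once this classification is settled, the spectral computations and the invocation of Theorem \ref{MainTheo} are mechanical.
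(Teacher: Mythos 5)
Your plan matches the paper's proof: identify the irreducible classes of $P(\mathcal{C})$ as $\{0\}$, $[1,2]$, and $[K+1,\infty)$, compute the $2\times 2$ sub-matrix on $[1,2]$ directly (only $\xi_1,\ldots,\xi_4$ matter since $K\geq 2$ forces the second failure by index $4$), invoke Proposition \ref{vectorpropre} for the infinite class to get $\lambda_{\hbox{\tiny sym}}$, and close via Theorem \ref{MainTheo} together with Remark \ref{rem1} for $q\geq b/(b+1)$. The only slight imprecision is the informal "cookie-consumption" framing of the bifurcation (the classes track the edge-crossing counts $\ell(x)$, not cookies eaten at $x$) and the remark that one needs the $2\times2$ matrix \emph{exactly} rather than up to spectrum (only its spectral radius $\nu$ enters), but neither affects the correctness of the argument.
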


Since $\nu$ is not symmetric in $(p_1,p_2)$, Proposition
\ref{Prop4Cook} confirms that it is possible to construct a recurrent
cookie random walk such that the inversion of the first two cookies
yields a transient random walk. For $b=2$, one can choose, for
example, $p_1=\frac{1}{2}$, $p_2=\frac{4}{5}$ and $q\leq
\frac{1}{2}$.

Proposition \ref{Prop4Cook} also enables to construct a transient
cookie random walk with sub-linear growth.
\begin{prop}\label{PropVit0}
Let $X$ be a $\mathcal{C} = (p_1,p_2,0,0\,; q)$
cookie random walk with $q\geq b/(b+1)$ and $\nu(p_1,p_2) =
1/b$. Then $X$ is transient yet
\begin{equation*}
\liminf_{n\to\infty} \frac{|X_n|}{n} = 0.
\end{equation*}
\end{prop}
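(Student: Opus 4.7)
The transience half is immediate: the hypothesis $q \geq b/(b+1)$ is precisely one of the sufficient conditions appearing in Theorem~\ref{MainTheo}(b), so $|X_n|\to+\infty$ almost surely, and nothing else needs to be said for that part.

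For the sub-linear growth, I would introduce the hitting times $T_n \defeq \inf\{k\geq 0 : |X_k| = n\}$, almost surely finite by transience, and use the immediate inequality
\[
\liminf_{k\to\infty}\, \frac{|X_k|}{k} \;\leq\; \liminf_{n\to\infty}\, \frac{n}{T_n}
\]
to reduce the problem to showing that $n/T_n$ has $\liminf$ equal to zero. The natural way to attack this is to locate, along the geodesic ray eventually followed by $X$, a subsequence of ``trap'' vertices $x_{n_j}$ at which the walk, upon first arrival, performs a very long downward excursion before finally moving past them. The existence of arbitrarily long such traps should come from the critical assumption $\nu(p_1,p_2)=1/b$: the $2\times 2$ matrix appearing in Proposition~\ref{Prop4Cook} is one of the irreducible sub-blocks of the cookie matrix $P(\mathcal{C})$ from Definition~\ref{defxip}, and $\nu(p_1,p_2)=1/b$ is exactly the critical threshold for the associated multi-type branching Markov chain, whose typed offspring mean after multiplication by the branching factor $b$ becomes equal to $1$. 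At this exact threshold, the Green function of the branching chain diverges, so the total duration of an excursion rooted at a fresh vertex governed by this block has infinite expectation.

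From here the conclusion follows via a Borel--Cantelli argument: using the tree structure, the walk meets infinitely many fresh vertices on which it launches ``quasi-independent'' downward excursions, and each such excursion has, by infinite mean of its duration, a uniformly positive probability of exceeding an arbitrarily large threshold $A$. On such a trap event along a suitable subsequence $n_j$ one has $T_{n_j+1} \geq A \cdot n_j$, so $\liminf n/T_n \leq 1/A$ almost surely; letting $A\to\infty$ yields the claim. The main obstacle I anticipate is twofold: first, genuinely identifying the excursion times of $X$ with the Green function of the branching chain driven by $P(\mathcal{C})$ and transferring the standard ``infinite-mean at criticality'' statement to the exact threshold $\nu=1/b$ rather than just to the super-critical side; second, producing honest independence between the excursions at distinct levels, which will probably require exploiting the monotonicity in $\mathcal{C}$ from Theorem~\ref{MainTheo} to dominate $X$ from below by a simpler process whose excursions branch off on disjoint fresh sub-trees.
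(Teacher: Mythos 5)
Your overall route matches the paper's: isolate the critical $2\times 2$ block, translate criticality into heavy-tailed excursion lengths, and use a Borel--Cantelli/extreme-value argument to show that one excursion occasionally dwarfs everything before it. However, the quantitative step is not right as written, and this is exactly where the real content lies.

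The flaw is in the sentence ``each such excursion has, by infinite mean of its duration, a uniformly positive probability of exceeding an arbitrarily large threshold $A$. On such a trap event along a suitable subsequence $n_j$ one has $T_{n_j+1}\geq A\cdot n_j$.'' For the last inequality you actually need the excursion launched at level $n_j$ to exceed roughly $(A-1)\,n_j$, a threshold that \emph{grows} with $j$; the probability of that event tends to zero, so it is not ``uniformly positive''. What you need is that the series $\sum_j \P\{R>(A-1)n_j\}$ diverges and then a second Borel--Cantelli along genuinely independent excursions, or equivalently that $\limsup_k R_k/\sum_{i\leq k}R_i=1$ a.s. Infinite expectation of $R$ alone does not obviously give this (the equivalence between $\E[R]=\infty$ and $\limsup_k R_k/S_k=1$ is a real theorem, not a triviality, and your stated justification is not a proof of it). The paper short-circuits this by computing the precise tail $\P_1\{\Lambda>x\}\sim C/\sqrt{x}$ of the total progeny $\Lambda$ of the critical $2$-type branching process (via generating functions and a Tauberian theorem, or Theorem~1 of~\cite{Chi04}), so that the excursion lengths $R_i\,\overset{\hbox{\tiny{law}}}{=}\,2\Lambda-1$ lie in the domain of normal attraction of a positive $1/2$-stable law. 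For such variables the domination of the sum by its largest term is classical, and the conclusion follows from the clean inequality $|X_{D_k}|/D_k\leq 1-R_k/\sum_{i\leq k}R_i$.

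Two further points worth noting. First, your ``quasi-independent'' excursions are in fact honestly i.i.d.\ in the paper's construction: one defines interlaced stopping times $(S_k,D_k)$ marking the start and end of each downward excursion from a freshly visited vertex, and the Markov property together with the fact that the walk evolves independently on disjoint subtrees gives genuine independence; no stochastic domination by an auxiliary process is needed. Second, the reduction via $T_n$ and a single large excursion is a slightly weaker bound than the one in the paper, because $T_{n_j}$ itself incorporates all past excursion lengths, so the comparison $T_{n_j+1}\geq A\,n_j$ is really a statement about $R_k$ dominating $\sum_{i<k}R_i$ --- exactly what the paper's inequality makes explicit. So the skeleton of your argument is sound, but the infinite-mean shortcut must be replaced by the $1/\sqrt{x}$ tail estimate (or by citing and correctly applying the $\limsup R_k/S_k=1$ criterion), and the independence structure must be built, not merely asserted.
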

We do not know whether the liminf above is, in fact, a limit.

\bigskip

The remainder of this paper is organized as follows. In the next
section, we prove a $0-1$ law for the cookie random walk. In section
$3$, we introduce a branching Markov chain $L$ (or equivalently a
multi-type branching process with infinitely many types) associated
with the local time of the walk. We show that the walk is recurrent
if and only if this process dies out almost surely. We also prove
some monotonicity properties of the process $L$ which imply the
monotonicity property of the cookie random walk stated in Theorem
\ref{MainTheo}. In section $4$, we study the decomposition of the
transition matrix $P$ of $L$ and provide some results concerning the
evolution of a tagged particle. Section $5$ is devoted to completing
the proof of Theorem \ref{MainTheo}. In section $6$, we prove the
law of large number and C.L.T. of Theorem \ref{TheoSpeed} and
Proposition \ref{PropVit0}. In section \ref{sectioncalcul}, we compute the value of
the critical parameter $\lambda(\mathcal{C})$ for the special cookie
environments mentioned above and prove Theorem \ref{TheoNcook} and
Proposition \ref{Prop4Cook}. Finally, in the last section, we
discuss some possible extensions of the model.

\newpage

\section{The 0 - 1 law}\label{section0-1}

In the remainder of the paper, $X$ will always denote a $\mathcal{C}
= (p_1,\ldots,p_M\,; q)$ cookie random walk on a $b$-ary tree $\T$.
We denote by $\T^x$ the sub-tree of $\T$ rooted at $x$. For
$n\in\N$, we also use the notation $\T_{n}$ (resp. $\T_{\le n}$,
$\T_{<n}$) to denote the set of vertices which are at height $n$
(resp. at height $\leq n$ and $< n$) from the root . We introduce
the sequence $(\tau^k_{o})_{k\geq 0}$ of return times to the root.
\begin{equation*}
\left\{
\begin{aligned}
&\tau^0_{o} \;\defeq\; 0,\\
&\tau^{k+1}_{o}\; \defeq \; \min\{i>\tau^k_{o},\; X_i = o\},\\
\end{aligned}
\right.
\end{equation*}
with the convention $\min\{\emptyset\} = \infty$.  The following
result shows that, although a cookie random walk is not a Markov
process, a $0-1$ law holds (recall that we assume $q\neq 0$ in the
definition of a cookie environment).

\begin{duge}[{\bf $\mathbf{0}-\mathbf{1}$ law}]\label{loi0-1}
Let $X$ be a $\mathcal{C}$ cookie random walk.
\begin{enumerate}
\item If there exists $k\geq 1$ such that $\P\{\tau^k_o=\infty\}>0$, then $\lim_{n\rightarrow \infty}
|X_n|=\infty$ $\P$-a.s.
\item Otherwise, the walk visits any vertex infinitely often $\P$-a.s.
\end{enumerate}
\end{duge}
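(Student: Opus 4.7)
The plan is to reduce the statement to a 0-1 law for the event $A = \{X_n = o$ for infinitely many $n\}$: I will show that $A^c$ coincides with $\{|X_n| \to \infty\}$ up to a null set, and establish $\P(A) \in \{0,1\}$ with $\P(A) = 1$ precisely in the second case of the lemma. If $\liminf_n |X_n| < \infty$, by pigeonhole some vertex is visited infinitely often; taking $v$ of minimal height $h$ and using that $p_i \in [0,1)$ and $q \in (0,1)$ make the conditional probability of moving from $v$ to $\pere{v}$ at least $\alpha := \min_i(1-p_i) \wedge (1-q) > 0$ at every visit, L\'evy's conditional Borel--Cantelli forces $\pere{v}$ to be visited infinitely often, contradicting minimality unless $v = o$. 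The second case of the lemma follows at once: $\P(\tau^k_o = \infty) = 0$ for all $k$ gives $\P(A) = \lim_k \P(\tau^k_o < \infty) = 1$, and an analogous induction on height---at each vertex, after at most $M+1$ visits the cookies are exhausted so every further move to a specified child has probability exactly $q/b > 0$---propagates the property to every vertex.

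For the first case I would construct the walk from an i.i.d.\ family of uniform coins $(U_{x,k})_{x \in \T, k \ge 1}$ attached to the vertices, so that the randomness driving excursions inside disjoint subtrees is independent. Let $\phi$ be the probability that a walk started at a vertex $v$ with $\T^v$ in its initial (fresh) configuration never reaches $\pere{v}$; by spatial homogeneity $\phi$ depends only on $\mathcal{C}$ and $b$. For each $x \in \T$, set $V_x = \{\text{walk visits }x\}$ and $S_x = \{\text{walk enters }\T^x\text{ and stays there forever}\}$. At the first entry to $\T^x$ the subtree is fresh and the remaining behaviour of the walk inside it is governed by coins independent of the past, so $\P(S_x \mid V_x) = \phi$ and hence $\P(S_x) = \phi \cdot \P(V_x)$. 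The walk can stay forever in at most one subtree, so the events $(S_x)_{x \in \T_\ell}$ at a fixed level $\ell$ are pairwise disjoint, yielding $\phi \sum_{x \in \T_\ell} \P(V_x) \le 1$. Running the Borel--Cantelli induction of the previous paragraph conditionally on $A$ shows that $A \subset V_x$ up to a null set for every $x$, hence $b^\ell \P(A) \le \sum_{x \in \T_\ell} \P(V_x)$. Combining these two bounds, $\P(A) \le 1/(\phi b^\ell)$ for every $\ell$, which forces $\P(A) = 0$ as soon as $\phi > 0$.

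The main obstacle is therefore to show that the hypothesis of part (1) implies $\phi > 0$. The converse implication is immediate: if $\phi > 0$, then forcing the walk to stay at the root for its first $M$ steps (probability $\prod_{j=1}^M(1-p_j)$), then move to $\fils{o}{1}$ (probability $q/b$, since the root cookies are now exhausted), then escape from the fresh subtree $\T^{\fils{o}{1}}$ (probability $\phi$) shows $\P(\tau^{M+1}_o = \infty) \ge \prod_{j=1}^M(1-p_j) \cdot (q/b) \cdot \phi > 0$. For the direct implication I would assume $\phi = 0$ and argue by infinite descent that for every reachable state $(v, E)$ the walk reaches $\pere{v}$ almost surely: if not, the dichotomy ``visits $v$ infinitely often'' (ruled out by Borel--Cantelli via the uniform lower bound $\alpha$) versus ``visits $v$ only finitely often'' forces the walk to confine itself in some $\T^{\fils{v}{i}}$ with a state obtained by restriction of $E$; iterating the descent far enough below the consumed portion of $E$ lands on a state where $\T^{v_k}$ is genuinely fresh, triggering $\phi = 0$ and contradicting the assumption. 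Consequently $\P(\tau^k_o < \infty) = 1$ for all $k$, incompatible with the hypothesis of part (1).
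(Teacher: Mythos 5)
Your reduction of the lemma to a $0$--$1$ law for the event $A$ that $X$ visits $o$ infinitely often, and your treatment of part (2), are fine and essentially parallel the paper. The real trouble is in part (1), and it is not a technicality: the quantity $\phi$ you pivot on is the wrong invariant. You define $\phi$ as the probability that a walk launched at a vertex $v$ with $\T^v$ fresh \emph{never} reaches $\pere{v}$, and you claim that the hypothesis of (1) forces $\phi>0$. This is false. Take $\mathcal{C}=(0\,;q)$: on its first visit to any vertex the walk eats the cookie of strength $0$ and jumps to the father with probability $1$, so $\phi=0$; yet for $q>\sqrt{b}/(1+\sqrt{b})$ this walk is transient by Corollary \ref{CorDigging}, hence $\P\{\tau^{k_0}_o=\infty\}>0$ for some $k_0$. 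The same happens whenever $p_1=0$. Your bound $\P(A)\le 1/(\phi b^{\ell})$ is therefore vacuous in precisely the cases one needs it, and the ``infinite descent'' argument for $\phi=0\Rightarrow\hbox{recurrence}$ cannot be repaired: on the escape event the walk may exit and re-enter a subtree $\T^x$ several times before finally staying, and those later excursions are not into a fresh subtree, so $\phi$ says nothing about them. (Minor: even where it applies, $\P(S_x\,|\,V_x)=\phi$ should read $\ge$, not $=$, for the same reason; the direction is the one you need, so this is cosmetic.)

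The paper's proof avoids $\phi$ entirely. It reflects the walk at height $N$ and considers the event that the reflected walk fails to reach height $N$ before every vertex at a fixed intermediate height $n$ has been visited $k_0$ times. This event decomposes over the $b^{n}$ independent fresh subtrees rooted at height $n$; in each subtree the relevant probability is bounded by $\P\{\tau^{k_0}_o<\infty\}$, and the crucial point is that this bound allows up to $k_0-1$ exits and re-entries before escape, rather than demanding a single clean fresh excursion. The hypothesis $\P\{\tau^{k_0}_o<\infty\}<1$ then gives $\P\{\tau^{k_0}_o<\infty\}^{b^{n}}\to 0$. If you want to keep the flavour of your argument, the natural fix is to replace $\phi$ by the probability that a fresh walk reaches its parent fewer than $k_0$ times, which is positive under the hypothesis of (1); but the associated escape-type events at a fixed level are no longer pairwise disjoint (several subtrees can simultaneously be exited fewer than $k_0$ times), and one is pushed back towards exactly the reflected-walk bookkeeping of the paper.
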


\begin{proof}
Let us first assume that $\P\{\tau_o^k<\infty\}=1$ for all $k$
\emph{i.e.} the walk returns infinitely often to the origin almost
surely. Since there are no cookies left after the
$M^{\hbox{\tiny{th}}}$ visit of the root, the walk will visit every
 vertex of height $1$ infinitely often with probability $1$. By
induction, we conclude that the walk visits every vertex of $\T$
infinitely often almost surely.

We now prove the transience part of the proposition. We assume that
$\P\{\tau_o^{k_0}<\infty\} < 1$ for some $k_0\in \N$. Let $\Omega_1$
denote the event
\begin{equation*}
\Omega_1\;\defeq\;\Big\{ \lim_{i\to \infty} |X_i| =\infty\Big\}^c.
\end{equation*}
Given $N\in \N$, let $\tilde{X}^N$ denote a multi-excited random
walk on $\T$ reflected at height $N$ (\emph{i.e.} a process with
the same transition rule as $X$ but which always goes back to its father
when it reaches a vertex of height $N$). This process takes values
in the finite state space $\T_{\leq N}$ and thus visits any site of
$\T_{\leq N}$ infinitely often almost surely. For $x\in \T_{< N}$,
let $\tilde{\tau}_x^{k_0}$ be the time of the
$k_0^{\hbox{\tiny{th}}}$ return of $\tilde{X}^N$ to the vertex $x$.
For $n<N$, let also $\tilde{\tau}^{k_0}_n = \sup_{x\in \T_{n}}
\tilde{\tau}_x^{k_0}$ be the first time when all the vertices of
height $n$ have been visited at least $k_0$ times. We consider the
family of events $(A_{n,N})_{n<N}$ defined by:
\begin{equation*}
A_{n,N}\;\defeq\;\{\tilde{X}^N \mbox{does not reach height $N$ before
$\tilde{\tau}^{k_0}_n$}\}.
\end{equation*}
Let us note that, on $A_{n,N}$, the processes $X$ and $\tilde{X}^N$
are equal up to time $\tilde{\tau}^{k_0}_n$. Moreover, given
$n\in\N$ and $\omega \in \Omega_1$, we can always find $N
> n$ such that $\omega \in A_{n,N}$. Hence,
\begin{equation*}
\Omega_1\subset \bigcap_{n\ge 1} \bigcup_{N>n} A_{n,N}.
\end{equation*}
In particular, for any fixed $n\geq 1$, we get
\begin{equation}\label{eq01-1}
\P\{\Omega_1\}\le \sup_{N>n} \P\{A_{n,N}\}.
\end{equation}
It remains to bound $\P\{A_{n,N}\}$. For $x \in \T_{n}$, we
consider the subsets of indices:
\begin{equation*}
\begin{aligned}
&I_x \;\defeq\; \{0\le i \le \tilde{\tau}^{k_0}_n,\; \tilde{X}^N_i \in \T^x\}.\\
&I'_x \;\defeq\; \{0\le i \le \tilde{\tau}_x^{k_0},\; \tilde{X}^N_i
\in \T^x\}\subset I_x.
\end{aligned}
\end{equation*}
With these notations, we have
\begin{eqnarray*}
\P\{A_{n,N}\}& = &\P\{\forall x \in \T_{n},\; (\tilde{X}^N_i,i\in I_x) \mbox{ does not reach height $N$}\}\\
& \le & \P\{\forall x \in \T_{n},\; (\tilde{X}^N_i,i\in I'_x)
\mbox{ does not reach height $N$}\}.
\end{eqnarray*}
Since the multi-excited random walk evolves independently in
distinct subtrees, up to a translation, the stochastic processes
$(\tilde{X}^N_i,i\in I'_x)_{x\in \T_{n}}$ are i.i.d. and have the law
of the multi-excited random walk $X$ starting from the root $o$,
reflected at height $N-n$ and killed at its $k_0^{\hbox{\tiny{th}}}$
return to the root. Thus,
\begin{equation}\label{eq01-2}
\P\{A_{n,N}\} \leq \P\Big\{ (\tilde{X}^{N-n}_i,i\le
\tilde{\tau}_o^{k_0}) \mbox{ does not reach height
$N-n$}\Big\}^{b^n} \leq \P\{ \tau_o^{k_0}<\infty\}^{b^n}.
\end{equation}
Putting (\ref{eq01-1}) and (\ref{eq01-2}) together, we conclude that
\begin{equation*}
\P\{\Omega_1\}\le \P\{ \tau_o^{k_0}<\infty\}^{b^n}
\end{equation*}
and we complete the proof of the lemma by letting $n$ tend to
infinity.
\end{proof}

\section{The branching Markov chain $L$}\label{sectionL}
\subsection{Construction of $L$}\label{secdefL}
In this section, we construct a branching Markov chain which
coincides with the local time process of the walk in the recurrent
setting and show that the survival of this process characterizes the
transience of the walk.

Recall that $\tilde{X}^N$ denotes the cookie random walk $X$
reflected at height $N$. Fix $k_0
>0$. Let $\sigma_{k_0}$ denote the time of the
$k_0^{\hbox{\tiny{th}}}$ crossing of the edge joining the root of
the tree to itself:
\begin{equation*}
\sigma_{k_0} \;\defeq\; \inf\Big\{ i> 0,\;
\sum_{j=1}^{i}\un_{\{\tilde{X}^N_j=\tilde{X}^N_{j-1}=o\}} = k_0
\Big\}.
\end{equation*}
Since the reflected walk $\tilde{X}^N$ returns to the root
infinitely often, we have $\sigma_{k_0}<\infty$ almost surely. Let
now $\ell(x)$ denote the number of jumps of $\tilde{X}^N$ from
$\pere{x}$ to $x$ before time $\sigma_{k_0}$ \textit{i.e.}
\begin{equation*}
\ell^N(x) \;\defeq\; \sharp\{0\leq i < \sigma_{k_0}, \;
\tilde{X}^N_i = \pere{x} \hbox{ and } \tilde{X}^N_{i+1} = x\}.
\qquad\hbox{for all $x\in\T_{\leq N}$}
\end{equation*}
We consider the $(N+1)$-step process $L^N =
(L^N_0,L^N_1,\ldots,L^N_{N})$ where
\begin{equation*}
L^N_n\;\defeq\; (\ell^N(x),\; x\in \T_{n})\in \N^{\T_{n}}.
\end{equation*}
Since the quantities $L^N$, $\ell^{N}$ depend on $k_0$, we should
rigourously write $L^{N,k_0}$, $\ell^{N,k_0}$. Similarly, we should
write $\sigma^N_{k_0}$ instead of $\sigma_{k_0}$. Yet, in the whole
paper, for the sake of clarity, as we try to keep the notations as
simple as possible, we only add a subscript to emphasize  the
dependency upon some parameter when we feel that it is really
necessary. In particular, the dependency upon the cookie environment
$\mathcal{C}$ is usually implicit.

The process $L^N$ is Markovian, in order to compute its transition
probabilities we need to introduce some notations which we will
extensively use in the rest of the paper.

\begin{defi}\label{defxip}\mbox{ }
\begin{itemize}
\item Given a cookie environment $\mathcal{C}=(p_1,\ldots,p_M\,;q)$,
we denote by $(\xi_i)_{i\ge 1}$ a sequence of independent random
variables taking values in $\{0,1,\ldots,b\}$, with distribution:
\begin{eqnarray*}
\P\{\xi_i=0\} &=& \left\{
\begin{array}{ll}
1-p_i&\hbox{if $i \leq M$,}\\
1-q&\hbox{if $i > M$,}\\
\end{array}
\right.\\
\P\{ \xi_i=1\} = \ldots = \P\{ \xi_i=b\} &=& \left\{
\begin{array}{ll} \frac{p_{i}}{b} &\hbox{if $i \leq M$,}\\
\frac{q}{b}&\hbox{if $i > M$.}\\
\end{array}
\right.
\end{eqnarray*}
We say that $\xi_i$ is a "failure" when $\xi_i = 0$.
\item We call "cookie environment matrix" the non-negative matrix $P = (p(i,j))_{i,j \geq
0}$ whose coefficients are given by $p(0,j) = \un_{\{j=0\}}$ and,
for $i\geq 1$,
\begin{equation*}
p(i,j) \defeq \P\Big\{ \sum_{k=1}^{\gamma_i} \un_{\{\xi_k = 1\}} =
j\Big\}\quad \hbox{where}\quad \gamma_i \defeq \inf\Big\{n,\;
\sum_{k=1}^{n} \un_{\{\xi_k = 0\}} = i\Big\}.
\end{equation*}
Thus, $p(i,j)$ is the probability that there are exactly $j$ random
variables taking value $1$ before the $i^{\hbox{\tiny{th}}}$ failure
in the sequence $(\xi_1,\xi_2,\ldots)$.
\end{itemize}
\end{defi}

The following lemma characterizes the law of $L^N$.
\begin{duge}\label{transL} The process $L^N=(L^N_0,L^N_1,\ldots,L^N_N)$ is a Markov process on $\bigcup_{n=1}^N\N^{\T_{n}}$.
Its transition probabilities can be described as follows:
\begin{itemize}
\item[\textup{(a)}] $L_0 = (k_0)$ \emph{i.e.} $\ell(o) =
k_0$.
\item[\textup{(b)}] For $1\leq n\leq N$ and $x_1,\ldots,x_k\in \T_{n}$
with distinct fathers, conditionally on $L^N_{n-1}$, the random
variables $\ell^N(x_1),\ldots,\ell^N(x_k)$ are independent.
\item[\textup{(c)}] For $x\in \T_{n}$ with children $\fils{x}{1},\ldots,\fils{x}{b}$,
the law of $\big(\ell^N(\fils{x}{1}),\ldots,
\ell^N(\fils{x}{b})\big)$, conditionally on $L^N_n$, depends only on
$\ell^N(x)$ and is given by:
\begin{eqnarray*}
&&\hspace{-0.8cm}\P\Big\{\ell^N(\fils{x}{1})=0,\ldots,\ell^N(\fils{x}{b})=0\;
\Big|\;
\ell^N(x) = 0\Big\} = 1\\
&&\hspace{-0.8cm}\P\Big\{\ell^N(\fils{x}{1})=j_1,\ldots,\ell^N(\fils{x}{b})=j_b\;
\Big|\; \ell^N(x) = j_0 > 0\Big\}
\\&&\hspace{1cm}=\P\Big\{\forall k\in [0,b],\; \sharp\{1\le i\le j_0+\ldots+j_b,\; \xi_i=k\} = j_k
\mbox{ and } \,\xi_{j_0+\ldots+j_b}=0\Big\}.
\end{eqnarray*}
In particular, conditionally on $\ell^N(x) = j_0$, the random
variable  $\ell^N(\fils{x}{k})$ is distributed as the number of
$\xi_i$'s taking value $k$ before the $j_0^{\hbox{\tiny{th}}}$
failure. By symmetry, this distribution does not depend on $k$ and,
with the notation of Definition \ref{defxip}, we have
\begin{equation*}
\P\Big\{\ell^N(\fils{x}{k})= j \; \Big|\; \ell^N(x) = j_0\Big\} =
p(j_0,j).
\end{equation*}
\end{itemize}
\end{duge}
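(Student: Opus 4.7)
The plan is to construct an auxiliary family of mutually independent $\xi$-sequences, one per vertex, which drives the reflected walk $\tilde{X}^N$ and from which the edge-local-times can be read off directly. All three assertions then reduce to a counting argument inside each vertex's subtree.

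Concretely, for each $x\in\T_{\le N}$, attach an independent sequence $(\xi_i^x)_{i\geq1}$ with the law of Definition \ref{defxip}, the sequences being jointly independent across vertices. I then realize $\tilde{X}^N$ by the rule: on its $i$-th visit to $x$, the walk moves to $\pere{x}$ if $\xi_i^x=0$ and to $\fils{x}{k}$ if $\xi_i^x=k\in\{1,\ldots,b\}$ (with the obvious reflection at height $N$, which does not affect the statement). The marginals of $\xi_i^x$ coincide with the cookie/post-cookie transition probabilities at $x$, so this is a genuine realization of $\tilde{X}^N$.

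Assertion (a) follows immediately from $\pere{o}=o$ and the very definition of $\sigma_{k_0}$: the indices $0\le i<\sigma_{k_0}$ with $\tilde{X}^N_i=\tilde{X}^N_{i+1}=o$ are exactly $k_0$ in number. The heart of the argument is (c). The key observation is that $\tilde{X}^N_{\sigma_{k_0}}=o$, which forces every sub-excursion into a subtree $\T^x$ initiated before $\sigma_{k_0}$ to be completed before $\sigma_{k_0}$. Hence on $\{\ell^N(x)=j_0\}$, the walk enters $\T^x$ through $x$ exactly $j_0$ times and leaves $\T^x$ through $x$ exactly $j_0$ times, each departure corresponding to a $\xi^x$-value equal to $0$. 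Thus the prefix of $(\xi_i^x)$ consumed at $x$ is exactly $\xi_1^x,\ldots,\xi_{\gamma_{j_0}^x}^x$ and ends with a failure, whence
$$
\ell^N(\fils{x}{k})\;=\;\sharp\{\,1\le i\le \gamma_{j_0}^x\,:\,\xi_i^x=k\,\},\qquad k=1,\ldots,b,
$$
which is exactly the joint law displayed in (c); the marginal $p(j_0,j)$ then follows by symmetry among $\{1,\ldots,b\}$. The case $\ell^N(x)=0$ is trivial, as the walk never enters $\T^x$.

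Finally, (b) and the Markov property are by-products of the construction: $L^N_n$ is a deterministic function of $L^N_{n-1}$ and of the independent family $(\xi^z)_{z\in\T_{n-1}}$, while $L^N_0,\ldots,L^N_{n-1}$ is measurable with respect to $(\xi^z)_{z\in\T_{<n-1}}$ and therefore independent of it; conditional independence of $\ell^N(x_1),\ldots,\ell^N(x_k)$ given $L^N_{n-1}$ when the fathers are distinct is then immediate from the independence of the distinct $\xi^{\pere{x_i}}$ sequences. The only delicate point in the whole argument is the completion-of-excursions claim, which is handled by the fact that $\sigma_{k_0}$ is a time at which the walk sits at the root.
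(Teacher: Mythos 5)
Your proposal is correct and matches the paper's argument in substance: the paper likewise relies on the observation that the reflected walk sits at the root at times $0$ and $\sigma_{k_0}$, so that entries and exits of any proper subtree $\T^x$ balance, and that distinct subtrees evolve independently; your explicit per-vertex families $(\xi^x_i)_{i\ge1}$ simply make rigorous what the paper summarizes as ``(c) follows readily from the definition''. The only point worth flagging is that you should note (as the paper does) that $\sigma_{k_0}<\infty$ almost surely for the reflected walk, which is what guarantees that the prefix $\xi^x_1,\dots,\xi^x_{\gamma^x_{j_0}}$ is entirely consumed and that the completion-of-excursions claim applies.
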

\begin{proof} (a) is a direct consequence of the definition of
$\sigma_{k_0}$.  Let $x\in\T_{\leq N}$. Since the walk $\tilde{X}^N$
is at the root of the tree at times $0$ and $\sigma_{k_0}$, the
number of jumps $\ell^N(x)$ from $\pere{x}$ to $x$ is equal to the
number of jumps from $x$ to $\pere{x}$. Moreover, the walk can only
enter and leave the subtree $\T^x\cap \T_{\leq N}$ by crossing the
edge $(x,\pere{x})$. Therefore, conditionally on $\ell^N(x)$, the
families of random variables $(\ell^N(y),\; y\in \T^x\cap \T_{\leq
N})$ and $(\ell^N(y),\; y\in \T_{\leq N} \backslash \T^x)$ are
independent. This fact implies (b) and the Markov property of $L$.
Finally, (c) follows readily from the definition of the transition
probabilities of a cookie random walk and the construction of the
sequence $(\xi_i)_{i\ge 1}$ in terms of the same cookie environment.
\end{proof}

In view of the previous lemma, it is clear that for all $x\in
\T_{\leq N}$, the distribution of the random variables $\ell^N(x)$
does not, in fact, depend on $N$. More precisely, for all $N'
> N$, the $(N+1)$ first steps $(L_0^{N'},\ldots,L_N^{N'})$ of the process $L^{N'}$ have the same
distribution as $(L_0^{N},\ldots,L_N^{N})$. Therefore, we can consider a Markov
process $L$ on the state space $\bigcup_{n=1}^\infty\N^{\T_{n}}$:
\begin{equation*}
L = (L_{n},\; n\geq 0)\quad\hbox{with}\quad L_n= (\ell(x),\; x\in
\T_{n})\in \N^{\T_{n}}
\end{equation*}
where, for each $N$, the family $(\ell(x),\; x\in\T_{\leq N})$ is
distributed as $(\ell^N(x),\; x\in\T_{\leq N})$. We can interpret
$L$ as a branching Markov chain (or equivalently a multi-type
branching process with infinitely many types) where the particles
alive at time $n$ are indexed by the vertices of $\T_{n}$:
\begin{itemize}
\item The process starts at time $0$ with one particle $o$ located
at $\ell(o) = k_0$.
\item At time $n$, there are $b^n$ particles in the system indexed by $\T_n$.
The position (in $\N$) of a particle $x$ is $\ell(x)$.
\item At time $n+1$, each particle $x\in\T_{n}$ evolves independently: it splits into $b$
particles $\fils{x}{1},\ldots,\fils{x}{b}$. The positions
$\ell(\fils{x}{1}),\ldots,\ell(\fils{x}{b})$ of these new particles,
conditionally on $\ell(x)$, are given by the transition kernel
described in (c) of the previous lemma.
\end{itemize}
\begin{rem}\mbox{ }
\begin{itemize}
\item[\textup{(1)}] Changing the value of $k_0$ only affects the position
$\ell(o)$ of the initial particle but does not change the transition
probabilities of the Markov process $L$. Thus, we shall denote by $\P_{k}$ the probability where the process $L$
starts from one particle located at $\ell(o)=k$. The notation
$\E_{k}$ will be used for the expectation under $\P_k$.
\item[\textup{(2)}] The state $0$ is absorbing for the branching Markov chain $L$:
if a particle is at $0$, then all its descendants remain at $0$ (if
the walk never crosses an edge $(\pere{x},x)$, then, \emph{a
fortiori}, it never crosses any edge of the subtree $\T^x$).
\item[\textup{(3)}] Let us stress that, given $\ell(x)$, the positions of the $b$
children $\ell(\fils{x}{1}),\ldots,\ell(\fils{x}{b})$ are not
independent. However, for two distinct particles, the evolution of
their progeny is independent \emph{c.f.} (b) of Lemma \ref{transL}.
\item[\textup{(4)}] When the cookie random walk $X$ is
recurrent, the process $L$ coincides with the local time process of
the walk and one can directly construct $L$ from $X$ without
reflecting the walk at height $N$ and taking the limit. However,
when the walk is transient, one cannot directly construct $L$ with
$N=\infty$. In this case, the local time process of the walk,
stopped at its $k_0^{\hbox{\tiny{th}}}$ jump from the root to itself
(possibly $\infty$), is not a Markov process.
\end{itemize}
\end{rem}

Since $0$ is an absorbing state for the Markov process $L$, we say
that $L$ dies out when there exists a time such that all the
particles are at $0$. The following proposition characterizes the
transience of the cookie random walk in terms of the survival of $L$.
\begin{prop}\label{equivalence}
The cookie random walk is recurrent if  and only if, for any choice
of $k$, the process $L$, under $\P_k$ (\textit{i.e.} starting from
one particle located at $\ell(o)=k$), dies out almost surely.
\end{prop}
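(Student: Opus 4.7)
The plan is to reduce both directions to the coupling between $X$ and its reflected versions $\tilde X^N$ that defines $L$, using the $0$-$1$ law (Lemma \ref{loi0-1}) to control what "transient" really means. Two structural facts drive the argument: the distribution of $(L_0,\ldots,L_N)$ under $\P_k$ coincides with that of $(L^N_0,\ldots,L^N_N)$, and the state $0$ is absorbing so $\{L_N\neq 0\}$ is decreasing in $N$.

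For the direct implication, fix $k$ and assume $X$ is recurrent, meaning it visits every vertex infinitely often a.s. In particular the root is visited infinitely often; the $M$ cookies there are eaten after finitely many visits, and from that point on each visit at the root produces a self-loop with probability $1-q>0$, independently. Consequently the $k$-th self-loop time $\sigma_k^X$ of $X$ is a.s.\ finite, and the trace of $X$ on $[0,\sigma_k^X]$ is finite, hence contained in $\T_{\le H}$ for some random $H<\infty$. Coupling $\tilde X^N$ with $X$ through the same source of randomness, for every $N\ge H$ the reflected walk never reaches height $N$ before $\sigma_k^X$, so it agrees with $X$ on $[0,\sigma_k^X]$ and $\ell^N(x)=0$ for every $x$ at height $>H$. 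Combining this with the equality of laws up to generation $N$ and letting $N\to\infty$ yields $L_n=0$ for all $n>H$ under $\P_k$, i.e.\ $L$ dies out a.s.

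For the converse I argue by contraposition. If $X$ is transient, Lemma \ref{loi0-1} furnishes a $k_0\ge 1$ with $\rho:=\P\{\tau_o^{k_0}=\infty\}>0$, and on this event $|X_n|\to\infty$ so $X$ hits every height in finite time. Coupling $\tilde X^N$ with $X$ so that they agree until one of them first reaches height $N$: on $\{\tau_o^{k_0}=\infty\}$, $X$ reaches height $N$ at some finite time $T_N$ having made strictly fewer than $k_0$ visits to the root, so the same holds for $\tilde X^N$, and in particular $T_N<\sigma_{k_0}$. Hence the event $\{\tilde X^N\text{ reaches height }N\text{ before }\sigma_{k_0}\}$ has probability at least $\rho$; but this event equals $\{L^N_N\neq 0\}$, which has the same probability as $\{L_N\neq 0\}$ under $\P_{k_0}$. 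Monotone continuity of probability then gives $\P_{k_0}\{L\text{ survives forever}\}=\lim_N \P_{k_0}\{L_N\neq 0\}\ge \rho>0$, contradicting the assumption that $L$ dies out a.s.

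The only real obstacle is clean bookkeeping of the three relevant timing quantities under the coupling, namely the self-loop time $\sigma_{k_0}$ of $\tilde X^N$, the first time $T_N$ at which height $N$ is reached, and the visit times $\tau_o^{k_0}$ of $X$. Once one verifies that the coupling preserves the count of self-loops at the root up to time $T_N$, everything else reduces to continuity of probability combined with Lemma \ref{loi0-1}.
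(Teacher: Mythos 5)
Your proof is correct and follows essentially the same strategy as the paper's: both directions rest on the coupling between $X$ and the reflected walks $\tilde X^N$ that defines $L$, on the observation that $\{L_N\neq 0\}$ is precisely the event that $\tilde X^N$ reaches height $N$ before $\sigma_{k}$, and on the $0$-$1$ law of Lemma~\ref{loi0-1}. You simply present both implications in contraposed form and spell out slightly more of the bookkeeping (the finite trace bound $H$ in one direction, the comparison of $\tau_o^{k_0}$ with $\sigma_{k_0}$ in the other), but the underlying ideas coincide with the paper's.
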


\begin{proof}
Let us assume that, for any $k$, the process $L$ starting from
$k$ dies out almost surely. Then, $k$ being fixed, we can find
$N$ large enough such that $L$ dies out before time $N$ with
probability $c$ arbitrarily close to $1$. Looking at the definition
of $L$, this means that the walk $X$ crosses at least $k$ times
the edge $(o,\pere{o})$ before reaching level $N$ with probability
$c$. Letting $c$ tend to $1$, we conclude that $X$ returns to the
root at least $k$ times almost surely. Thus, the walk is recurrent.

Conversely, if, for some $k$, the process $L$ starting from $k$
has probability $c>0$ never to die out, then the walk $X$ crosses
the edge $(o,\pere{o})$  less than $k$ times with probability $c$.
This implies that $X$ returns to the root only a finite number of times
with strictly positive probability. According to Lemma \ref{loi0-1},
the walk is transient.
\end{proof}

Recall that, in the definition of a cookie environment, we do not
allow the strengths of the cookies $p_i$ to be equal to $1$. This
assumption insures that, for a particle $x$ located at $\ell(x) >M$,
the distribution $(\ell(\fils{x}{1}),\ldots,\ell(\fils{x}{b}))$ of
the position of its $b$ children has a positive density everywhere
on $\N^b$. Indeed, for any $j_1,\ldots,j_n \in \N$, the probability
\begin{equation}\label{posprob}
\P\Big\{\ell(\fils{x}{1})=j_1,\ldots,\ell(\fils{x}{b})=j_b \;|\;
\ell(x) = i >M\Big\}
\end{equation}
is larger that the probability of the $i+j_1+\ldots+j_b$ first terms
of the sequence $(\xi_k)_{k\geq 1}$ being
\begin{equation*}
\underbrace{0,\ldots,0}_{\hbox{\tiny{$i\!-\!1$
times}}},\underbrace{1,\ldots,1}_{\hbox{\tiny{$j_1$ times}}},\ldots,
\underbrace{b,\ldots,b}_{\hbox{\tiny{$j_b$ times}}},0
\end{equation*}
which is non zero. Therefore, we get the simpler criterion:
\begin{cor}\label{coroequiv}
The cookie random walk is recurrent if  and only if $L$ under $P_{M+1}$ dies out almost surely.
\end{cor}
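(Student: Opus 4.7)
The forward implication is immediate from Proposition \ref{equivalence}: if $X$ is recurrent then $L$ dies out almost surely under $\P_k$ for every $k$, in particular for $k=M+1$. The substantive direction is the converse, and the plan is to combine the branching structure of $L$ (Lemma \ref{transL}) with the positive-density observation (\ref{posprob}) just established for starting positions strictly larger than $M$.

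Assume therefore that $L$ dies out $\P_{M+1}$-almost surely, and define the survival probabilities $\pi_j \defeq \P_j\{L \text{ survives}\}$ for $j\geq 0$. The goal is to deduce $\pi_j = 0$ for every $j$, which by Proposition \ref{equivalence} is equivalent to recurrence of $X$. By part (b) of Lemma \ref{transL}, conditionally on $L_1$, the sub-processes rooted at the children $\fils{o}{1},\ldots,\fils{o}{b}$ of the initial particle are independent copies of $L$ started respectively from $\ell(\fils{o}{1}),\ldots,\ell(\fils{o}{b})$. A one-step decomposition then gives
\begin{equation*}
1 \;=\; \P_{M+1}\{L \text{ dies out}\} \;=\; \E_{M+1}\!\left[\prod_{i=1}^{b}\bigl(1-\pi_{\ell(\fils{o}{i})}\bigr)\right],
\end{equation*}
and since each factor lies in $[0,1]$, this forces $\pi_{\ell(\fils{o}{i})} = 0$ almost surely under $\P_{M+1}$, for every $i\in\{1,\ldots,b\}$.

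To upgrade this to $\pi_j=0$ for every $j$, I invoke (\ref{posprob}): because $M+1>M$, the joint law of $(\ell(\fils{o}{1}),\ldots,\ell(\fils{o}{b}))$ under $\P_{M+1}$ assigns strictly positive mass to every vector in $\N^b$. Taking the marginal in the first coordinate, $\P_{M+1}\{\ell(\fils{o}{1})=j\}>0$ for every $j\in\N$, which combined with the previous paragraph yields $\pi_j=0$ for all $j\geq 0$. Proposition \ref{equivalence} then gives recurrence. I do not expect any real obstacle here: the whole content of the corollary is the sharpening from \emph{all} $k$ to the single value $k=M+1$, and the only subtle point is that one must start strictly above the cookie pile in order for (\ref{posprob}) to apply — which is exactly why the statement is phrased with $\P_{M+1}$ rather than $\P_M$.
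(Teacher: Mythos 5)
Your proof is correct and follows essentially the same route the paper takes, namely reading the corollary as an immediate consequence of Proposition \ref{equivalence} together with the full-support observation $(\ref{posprob})$; your one-step decomposition at the root merely makes explicit the branching-structure argument that the paper leaves implicit in the paragraph preceding the corollary. The only cosmetic remark is that the conditional independence of the $b$ subtree-processes given $L_1$ is the content of Remark (3) following the construction of $L$ (which in turn rests on the decomposition in the proof of Lemma \ref{transL}) rather than literally of part (b) of that lemma, which concerns vertices with distinct fathers.
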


\subsection{Monotonicity property of $L$}

The particular structure of the transition probabilities of $L$ in
terms of successes and failures in the sequence $(\xi_k)$ yields
useful monotonicity properties for this process.

Given two branching Markov chains $L$ and $\tilde{L}$, we say that
$L$ is stochastically dominated by $\tilde{L}$ if we can construct
both processes on the same probability space in such way that
$$
\ell(x) \leq \tilde{\ell}(x)\quad\hbox{ for all $x\in\T$, almost surely.}
$$

\begin{prop}[\textbf{monotonicity w.r.t. the initial position}]\label{propmonoL1}
For any $0\leq i \leq j$, the process $L$ under $\P_i$ is stochastically dominated by $L$ under $\P_j$.
\end{prop}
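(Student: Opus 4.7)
The plan is to establish the stochastic domination via an explicit coupling based on the construction of $L$ from the sequences $(\xi_k)$ of Definition \ref{defxip}. Concretely, on some probability space, I would attach to every vertex $x \in \T$ an independent copy $(\xi^x_k)_{k\ge 1}$ of the sequence in Definition \ref{defxip}, these copies being mutually independent across $x$. Then I define two branching Markov chains $L$ and $\tilde L$ on the same probability space, with $\ell(o)=i$ and $\tilde \ell(o)=j$, by the same recursive rule: given a vertex $x$ with $\ell(x)=a$, set
\begin{equation*}
\ell(\fils{x}{k}) \defeq \sharp\big\{1\le n \le \gamma^x_{a},\; \xi^x_n = k\big\}, \qquad k=1,\ldots,b,
\end{equation*}
where $\gamma^x_a \defeq \inf\{n\geq 0,\; \sum_{m=1}^n \un_{\{\xi^x_m=0\}}=a\}$ (with $\gamma^x_0 \defeq 0$), and analogously for $\tilde \ell$ using the \emph{same} sequence $(\xi^x_k)$. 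By Lemma \ref{transL}(b)--(c), this produces two processes with the correct marginal distributions $\P_i$ and $\P_j$, since the different vertices $x$ use independent $\xi$-sequences while the children of a common vertex are built from a single $\xi^x$-sequence.

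Next I would prove by induction on $|x|$ that $\ell(x) \le \tilde \ell(x)$ for every $x \in \T$ almost surely. The base case $\ell(o)=i \le j = \tilde\ell(o)$ holds by assumption. For the inductive step, assume $\ell(x) \le \tilde \ell(x)$ for some $x$. Since $a \mapsto \gamma^x_a$ is non-decreasing in $a$, we have $\gamma^x_{\ell(x)} \le \gamma^x_{\tilde \ell(x)}$, and therefore, for each $k=1,\ldots,b$,
\begin{equation*}
\ell(\fils{x}{k}) = \sharp\{1\le n \le \gamma^x_{\ell(x)},\, \xi^x_n = k\} \;\le\; \sharp\{1\le n \le \gamma^x_{\tilde\ell(x)},\, \xi^x_n = k\} = \tilde\ell(\fils{x}{k}),
\end{equation*}
which proves the inductive step (the case $\ell(x)=0$ is immediate since $\ell(\fils{x}{k})=0 \le \tilde \ell(\fils{x}{k})$ by the absorbing property of $0$). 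This yields the domination vertex-by-vertex and concludes the proof.

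The approach is essentially a direct coupling, so there is no serious obstacle; the only point requiring a bit of care is checking that sharing the same $(\xi^x_k)$ across the two processes indeed preserves the marginal laws. This is guaranteed because the law of $\xi^x_n$ depends only on $n$ (through whether $n\le M$ or $n>M$) and not on the starting position $\ell(x)$, so feeding the \emph{same} sequence into both recursions yields the correct transition kernel of Lemma \ref{transL}(c) for each marginal while producing the monotonicity pathwise.
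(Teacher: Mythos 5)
Your proof is correct and is essentially the same argument as the paper's: couple the two processes by feeding the same $\xi$-sequence into both and use that the number of $k$'s before the $i^{\textup{th}}$ failure is non-decreasing in $i$. You simply spell out the vertex-by-vertex construction and the induction on $|x|$ that the paper leaves implicit.
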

\begin{proof}
Since each particle in $L$ reproduces independently, we just need to
prove that $L_1 = (\ell(\fils{o}{1}),\ldots,\ell(\fils{o}{b}))$
under $\P_i$ is stochastically dominated by $L_1$ under $\P_j$ and
the result will follows by induction. Recalling that, under $\P_i$
(resp. $\P_j$),  $\ell(\fils{o}{k})$ is given by the number of random
variables $\xi$ taking value $k$ before the $i^{\hbox{\tiny{th}}}$
failure (resp. $j^{\hbox{\tiny{th}}}$ failure) in the sequence
$(\xi_{n})$, we conclude that, when $i\leq j$, we can indeed create
such a coupling by using the same sequence $(\xi_n)$ for both
processes.
\end{proof}

\begin{prop}[\textbf{monotonicity w.r.t. the cookie environment}]\label{propmonoL2}\mbox{ }\\
Let $\mathcal{C} = (p_1, \ldots,p_M\,;q)$ and $\tilde{\mathcal{C}} =
(\tilde{p}_1,\ldots,\tilde{p}_M\,;\tilde{q})$ denote two cookies
environments such that $\mathcal{C}\leq \tilde{\mathcal{C}}$ for the
canonical partial order. Let $L$ (resp. $\tilde{L}$) denote the
branching Markov chain associated with the cookie environment
$\mathcal{C}$ (resp. $\tilde{\mathcal{C}}$). Then, for any $i\geq
0$, under $\P_i$, the process $\tilde{L}$ stochastically dominates
$L$.
\end{prop}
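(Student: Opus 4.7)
The plan is to construct, on a single probability space, a coupling of the processes $L$ and $\tilde L$ such that $\ell(x)\leq \tilde\ell(x)$ for every $x\in\T$ almost surely, by coupling the underlying sequences $(\xi_k)$ and $(\tilde\xi_k)$ of Definition \ref{defxip} through a common family of uniform random variables. The point is that $\mathcal C\le\tilde{\mathcal C}$ translates directly into
\[
\P\{\tilde\xi_k=0\}=1-\tilde p_k\;\le\;1-p_k=\P\{\xi_k=0\}
\]
(with $\tilde q$, $q$ for $k>M$), so a failure in $\tilde\xi$ should force a failure in $\xi$, while the positive values $1,\ldots,b$ can be synchronised.

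Concretely, I would let $(U_k,V_k)_{k\geq 1}$ be i.i.d.\ uniform on $[0,1]^2$, and, for a given cookie environment $(p_1,\ldots,p_M\,;q)$, set
\[
\xi_k\;\defeq\;\begin{cases}0 & \text{if }U_k\le 1-r_k,\\ \lceil bV_k\rceil & \text{otherwise,}\end{cases}
\]
where $r_k=p_k$ for $k\le M$ and $r_k=q$ for $k>M$; the analogous definition with $\tilde r_k$ produces $\tilde\xi_k$. Since $1-\tilde r_k\le 1-r_k$, the events satisfy $\{\tilde\xi_k=0\}\subset\{\xi_k=0\}$, and on the complementary event both $\xi_k$ and $\tilde\xi_k$ take the common value $\lceil bV_k\rceil$. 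Taking independent copies $(U^x_k,V^x_k)_{k\ge 1}$ indexed by the vertices $x\in\T$, I use them to build the transitions of $L$ and $\tilde L$ simultaneously, starting from the common initial state $\ell(o)=\tilde\ell(o)=i$.

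I would then prove by induction on the generation $n$ that $\ell(x)\le\tilde\ell(x)$ for every $x\in\T_n$. If $\ell(x)=0$ the state is absorbing for $L$ (Remark in Section \ref{secdefL}), so all descendants of $x$ in $L$ stay at $0$ and there is nothing to check. Otherwise, let $\gamma$ (resp.\ $\tilde\gamma$) be the index at which the $\ell(x)$\textsuperscript{th} (resp.\ $\tilde\ell(x)$\textsuperscript{th}) failure occurs in the sequence $(\xi^x_k)$ (resp.\ $(\tilde\xi^x_k)$). Two facts combine: (i) $\tilde\ell(x)\ge\ell(x)$ by the inductive hypothesis, and (ii) at every index, $\tilde\xi^x$ has accumulated no more failures than $\xi^x$. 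Consequently $\tilde\gamma\ge\gamma$. Since every index $j<\gamma$ with $\xi^x_j=k$ also satisfies $\tilde\xi^x_j=k$ by construction, the count of value $k$ before $\tilde\gamma$ in $\tilde\xi^x$ is at least the count before $\gamma$ in $\xi^x$, i.e.\ $\ell(\fils{x}{k})\le\tilde\ell(\fils{x}{k})$, which closes the induction.

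The argument is essentially bookkeeping once the correct coupling is written down; I expect no serious obstacle. The only point requiring a bit of care is to ensure that the coupled pairs $(\xi^x_k,\tilde\xi^x_k)$ reproduce the exact joint law required to build $L$ and $\tilde L$ as branching Markov chains, which is why I insist on using independent uniform pairs at each vertex $x$, rather than reusing the same $(\xi_k)$ across different particles.
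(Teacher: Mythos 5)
Your proposal is correct and takes essentially the same approach as the paper: the paper also couples the sequences $(\xi_n)$ and $(\tilde\xi_n)$ so that $\tilde\xi_n=0\Rightarrow\xi_n=0$ and $\xi_n=k\Rightarrow\tilde\xi_n=k$, which is exactly what your uniform construction realizes. The only minor difference is that the paper closes the induction by invoking Proposition~\ref{propmonoL1} (monotonicity in the initial position) together with a one-step coupling, whereas you fold both effects (unequal $\ell(x)\le\tilde\ell(x)$ and unequal failure rates) directly into the argument showing $\tilde\gamma\ge\gamma$, which is a self-contained but equivalent packaging.
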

\begin{proof}
Keeping in mind Proposition \ref{propmonoL1} and using again an
induction argument, we just need to prove the result for the first
step of the process \emph{i.e.} prove that we can construct $L_1$
and $\tilde{L}_1$ such that, under $\P_i$,
\begin{equation}\label{amontrmono}
\ell(\fils{o}{k}) \leq \tilde{\ell}(\fils{o}{k})\quad\hbox{ for all $k\in\{1,\ldots,b\}$.}
\end{equation}
Let $(\xi_n)$ denote a sequence of random variables as in Definition
\ref{defxip} associated with the cookie environment $\mathcal{C}$.
Similarly, let $(\tilde{\xi}_n)$ denote a sequence associated with
$\tilde{\mathcal{C}}$. When $\mathcal{C}\leq \tilde{\mathcal{C}}$,
we have $\P\{\xi_n = 0\} \geq \P\{\tilde{\xi}_n = 0\}$ and
$\P\{\xi_n = k\} \leq \P\{\tilde{\xi}_n = k\}$ for all
$k\in\{1,\ldots,b\}$. Moreover, the random variables $(\xi_n)_{n\geq
1}$ (resp. $(\tilde{\xi}_n)_{n\geq 1}$) are independent. Thus, we
can construct the two sequences $(\xi_n)$ and $(\tilde{\xi}_n)$ on
the same probability space in such way that for all $n\geq 1$ and
all $k \in\{1,\ldots,b\}$,
\begin{eqnarray*}
\tilde{\xi}_n = 0 &\hbox{ implies } &\xi_n = 0,\\
\xi_n =  k &\hbox{ implies } &\tilde{\xi}_n = k.
\end{eqnarray*}
Defining now, for each $k$, the random variable $\ell(\fils{o}{k})$
(resp. $\tilde{\ell}(\fils{o}{k})$) to be the number of random
variables taking value $k$ in the sequence $(\xi_n)$ (resp.
$(\tilde{\xi}_n)$) before the $i^{\hbox{\tiny{th}}}$ failure, it is
clear that (\ref{amontrmono}) holds.
\end{proof}

The monotonicity of the recurrence/transience behaviour of the
cookie walk with respect to the initial cookie environment stated in
Theorem \ref{MainTheo} now follows directly from the combination of
Corollary \ref{coroequiv} and Proposition \ref{propmonoL2}:

\begin{cor}\label{coromono}
Let $\mathcal{C} = (p_1,p_2, \ldots,p_M\,;q)$ and
$\tilde{\mathcal{C}} =
(\tilde{p}_1,\tilde{p_2},\ldots,\tilde{p}_M\,;\tilde{q})$ denote two
cookie environments such that $\mathcal{C}\leq \tilde{\mathcal{C}}$.
The $\tilde{\mathcal{C}}$ cookie random walk is transient whenever
the $\mathcal{C}$ cookie random walk is transient. Conversely, if
the $\tilde{\mathcal{C}}$ cookie random walk is recurrent, then so
is the $\mathcal{C}$ cookie random walk.
\end{cor}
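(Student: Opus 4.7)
The plan is to chain together the two pillars of this section: Corollary \ref{coroequiv}, which translates recurrence/transience of the cookie walk into a question of almost sure extinction of the branching Markov chain $L$, and Proposition \ref{propmonoL2}, which provides a stochastic coupling under which $\tilde{L}$ dominates $L$ pathwise whenever $\mathcal{C}\leq\tilde{\mathcal{C}}$. Since these two results have already been established, the corollary will reduce to a two-line logical concatenation.

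First I would fix the common starting position $k=M+1$ (legitimate since both environments have the same number $M$ of cookies) and, invoking Proposition \ref{propmonoL2} with $i=M+1$, realize $L$ and $\tilde{L}$ simultaneously on a single probability space so that $\ell(x)\leq\tilde{\ell}(x)$ for every $x\in\T$, almost surely. Because $0$ is absorbing for each process, this pointwise inequality yields monotonicity of the extinction events: $\{\tilde{L}\text{ dies out}\}\subset\{L\text{ dies out}\}$, or equivalently $\{L\text{ does not die out}\}\subset\{\tilde{L}\text{ does not die out}\}$.

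Next I would translate these inclusions back into statements about the walks via Corollary \ref{coroequiv}. If the $\tilde{\mathcal{C}}$-walk is recurrent, then $\tilde{L}$ under $\P_{M+1}$ dies out almost surely; by the inclusion above this forces $L$ to die out $\P_{M+1}$-almost surely, so the $\mathcal{C}$-walk is recurrent. The contrapositive gives the other direction: if the $\mathcal{C}$-walk is transient, then $L$ survives with positive probability, hence so does $\tilde{L}$, and the $\tilde{\mathcal{C}}$-walk is transient.

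I do not anticipate any real obstacle: all substantive work has been done upstream, namely the construction of the coupling from jointly coupled sequences $(\xi_n)$ and $(\tilde\xi_n)$ in Proposition \ref{propmonoL2}, and the extinction/recurrence equivalence in Corollary \ref{coroequiv}. The one routine check is that the pointwise domination $\ell\leq\tilde\ell$ really does imply the claimed containment of extinction events, but this is immediate from the definition of \emph{dying out} and the fact that $0$ is absorbing for $L$.
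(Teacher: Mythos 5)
Your proof is correct and takes essentially the same route as the paper, which simply asserts that the corollary "follows directly from the combination of Corollary \ref{coroequiv} and Proposition \ref{propmonoL2}." You fill in precisely the routine details the authors omit: fixing the common initial state $M+1$, observing that the pointwise coupling $\ell\leq\tilde\ell$ forces $\{\tilde{L}\text{ dies out}\}\subset\{L\text{ dies out}\}$, and translating via Corollary \ref{coroequiv}.
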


\section{The Matrix $P$ and the process $Z$}
\subsection{Irreducible classes of $P$}\label{sectionirredclasses}

The matrix $P$ plays a key role in the study of  $L$. Since we allow
the strength of a cookie to be zero, the transition matrix $P$ need
not be irreducible (a matrix is said to be irreducible if, for any
$i,j$, there exists $n$ such that $p^{(n)}(i,j)>0$, where
$p^{(n)}(i,j)$ denotes the $(i,j)$ coefficient of $P^n$).

For $i,j\in \N$, we use the classical notations

\begin{itemize}
\item  $i\rightarrow j\;$ if $p^{(n)}(i,j)>0$ for some $n\ge 1$.
\item $i\leftrightarrow j\;$ if $\;i\rightarrow j$ and $j\rightarrow i$.
\end{itemize}

\begin{duge}\label{lemmmono}
For any $i,j\in \N$, we have
\begin{itemize}
\item[\textup{(a)}]If $p(i,j)>0$ then $p(i,k)>0$ for all $k\le j$ and $p(k,j)>0$ for all $k\ge i$.
\item[\textup{(b)}] If $i\rightarrow j$ then $i\rightarrow k$ for all $k\le j$ and $k\rightarrow j$ for all $k\ge i$.
\end{itemize}
\end{duge}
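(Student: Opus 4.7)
My plan is to reduce everything to an explicit finite-word manipulation for the sequence $(\xi_k)_{k \geq 1}$, and then derive (b) by splicing together chains.

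For part (a), I would first unpack the definition: $p(i,j) > 0$ is equivalent to the existence of a finite word $(\epsilon_1, \ldots, \epsilon_n) \in \{0,1,\ldots,b\}^n$ with $\epsilon_n = 0$, containing exactly $i$ zeros and exactly $j$ ones among its entries, and satisfying $\prod_{m=1}^n \P\{\xi_m = \epsilon_m\} > 0$. The key observation is that at every position $m$ one has $\P\{\xi_m = 1\} = \P\{\xi_m = 2\} = \cdots = \P\{\xi_m = b\}$, so any permutation of the values $\{1,\ldots,b\}$ applied position-by-position preserves positivity of the product. To obtain $p(i,k) > 0$ for $k \leq j$, I use that $b \geq 2$ and replace $j-k$ of the positions labelled $1$ in the word by the value $2$. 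The resulting word still has $i$ zeros and ends with a zero, but now contains exactly $k$ ones; its probability remains positive, which witnesses $p(i,k) > 0$.

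To obtain $p(k,j) > 0$ for $k \geq i$, I instead append $k - i$ additional zeros to the word. At each appended position the probability of drawing $0$ is $1 - p_m$ or $1 - q$, both strictly positive since $p_m \in [0,1)$ and $q \in (0,1)$. The extended word contains $k$ zeros in total and still only $j$ ones (all occurring before the $i$-th and hence the $k$-th failure), ends with a zero, and has positive probability. This establishes (a).

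Part (b) then follows by feeding (a) into the endpoints of a chain. If $i \to j$, pick any finite sequence $i = i_0, i_1, \ldots, i_r = j$ with $p(i_{s-1}, i_s) > 0$ for every $s$. For $k \leq j = i_r$, apply the first half of (a) to the last transition to get $p(i_{r-1}, k) > 0$, and concatenate with the preceding transitions to conclude $i \to k$. For $k \geq i = i_0$, apply the second half of (a) to the first transition to get $p(k, i_1) > 0$, and concatenate with the remaining transitions to conclude $k \to j$. No serious obstacle arises; the only substantive ingredient is the exchangeability between the non-zero values of $\xi_m$, which underlies the substitution trick in part (a).
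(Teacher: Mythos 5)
Your proof is correct. Part (b) and the second half of (a) (appending zeros to the end of the witnessing word to realize $p(k,j)>0$ for $k \geq i$) coincide with the paper's argument. The one genuine difference is how you obtain $p(i,k)>0$ for $k \leq j$: the paper dispatches both directions with the single phrase ``inserting additional failures,'' whereas you swap $j-k$ of the symbols $1$ for the symbol $2$, using $b \geq 2$ together with the equidistribution $\P\{\xi_m = 1\} = \cdots = \P\{\xi_m = b\}$. Your substitution acts position-by-position without shifting the tail, so it cannot move a nonzero symbol onto a position $m$ with $p_m = 0$ (where that symbol would have probability zero and kill the realization). Read literally as ``splice in a $0$ and push the rest to the right,'' the paper's phrase would run into exactly this obstacle when some $p_m$ vanish; the intended operation must be a substitution-then-truncation that keeps the surviving symbols in place, which the paper leaves implicit. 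Your variant is the cleaner of the two for this half and makes visible where the hypothesis $b\geq 2$ is used. Both arguments ultimately rest on the same two facts: $\P\{\xi_m=0\}>0$ for every $m$ (no cookie of strength $1$, $q<1$) and the equiprobability of the nonzero values of $\xi_m$.
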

\begin{proof}
Recall the specific form of the coefficients of $P$: $p(i,j)$ is the
probability of having $j$ times $1$ in the sequence $(\xi_n)_{n\ge
1}$ before the $i^{\hbox{\tiny{th}}}$ failure. Let us also note that
we can always transform a realization of $(\xi_n)_{n\ge 1}$
contributing to $p(i,j)$ into a realization contributing to $p(i,k)$
for $k\leq j$ (resp. for $p(k,j)$ for $k\geq i$) by inserting
additional failures in the sequence. Since no cookie has strength
$1$, for any $n\ge 1$, $\P\{\xi_n = 0\} > 0$. Therefore, adding a
finite number of failures still yields, when $p(i,j)>0$, a positive
probability for these new realizations of the sequence $(\xi_n)$.
This entails (a).

We have  $i\rightarrow j$ if and only if there
exists a path  $i=n_0,n_1,\ldots,n_{m-1},n_m=j$ such that
$p(n_{t-1},n_{t})>0$. Using (a), we also have, for $k\leq j$,
$p(n_{m-1},k)>0$ (resp. for $k\geq i$, $p(k,n_1)>0$). Hence
$i,n_1,\ldots,n_{m-1},k$ (resp. $k,n_1,\ldots,n_{m-1},j$) is a path
from $i$ to $k$ (resp. from $k$ to $j$). This proves (b).
\end{proof}

\begin{duge}\label{lemmirred}
Let $a \le b$ such that $a \leftrightarrow b$. The finite sub-matrix
$(p(i,j))_{a\leq i,j\leq b}$ is irreducible.
\end{duge}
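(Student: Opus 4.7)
The plan is to exploit the specific monotone structure of the matrix $P$ encoded by Lemma \ref{lemmmono}. For each $i\ge 1$, define
\[J(i) \defeq \sup\{j\ge 0 \;:\; p(i,j) > 0\} \in \N\cup\{+\infty\}.\]
Part (a) of Lemma \ref{lemmmono} then amounts to saying that $p(i,j)>0 \iff j\le J(i)$, and that the map $i\mapsto J(i)$ is non-decreasing. This ``down-set plus monotonicity'' description of the support of $P$ is what drives the whole argument.

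The crucial quantitative statement I would prove is
\[J(i)\ge i+1 \ \text{ for every } i\in [a,b),\qquad \text{and}\qquad J(b)\ge b.\]
Both inequalities are obtained by the same iterative contradiction. For the first one, fix $i<b$ and observe that $i\to b$, by part (b) of Lemma \ref{lemmmono} applied to $a\to b$ together with $i\ge a$. Hence there exists a path $i=s_0,s_1,\dots,s_m=b$ with $p(s_{k-1},s_k)>0$ for every $k$. If one had $J(i)\le i$, then $s_1\le J(i)\le i$, and by monotonicity of $J$ also $J(s_1)\le J(i)\le i$, so $s_2\le J(s_1)\le i$; iterating, $s_k\le i$ for every $k$, contradicting $s_m=b>i$. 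The very same argument applied to a loop $b\to b$ (which exists since $b\to a\to b$) yields $J(b)\ge b$.

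Granted these two inequalities, the irreducibility of the sub-matrix $(p(i,j))_{a\le i,j\le b}$ follows quickly. On the one hand, $J(i)\ge i+1$ provides the one-step transition $i\to i+1$ within $[a,b]$ for every $i<b$; so by downward induction on $b-i$, every $i\in[a,b]$ reaches $b$ through the ladder path $i,i+1,\dots,b$ inside $[a,b]$. On the other hand, $J(b)\ge b$ gives $p(b,j)>0$ for every $j\in[a,b]$, so $b$ reaches any such $j$ in a single step. Concatenating the two yields the desired path $i\to i+1\to\dots\to b\to j$ staying entirely inside $[a,b]$.

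The only delicate point is the iterative contradiction: one must propagate the monotonicity of $J$ along \emph{every} step of the path, in order to keep the whole orbit bounded by $J(i)$. A one-step argument using only $s_1\le J(i)$ would not suffice; it is really the combination ``support of $p(i,\cdot)$ is a down-interval'' plus ``$J$ is non-decreasing'' that allows the bound to be carried all the way to $s_m$.
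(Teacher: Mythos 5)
Your proof is correct. Both your argument and the paper's rest on the same support-structure lemma (Lemma \ref{lemmmono}), but the way you organize the deduction is genuinely different. The paper takes an arbitrary path from $i$ to $j$ (whose existence follows from $i\to b\to a\to j$) and performs \emph{surgery} on it: using part (a) of Lemma \ref{lemmmono}, any ``backward'' step $n_{t-1}>n_t$ lets you splice out $n_t$, and iterating this shortens the path until it is non-decreasing, hence automatically trapped in $[a,b]$; the case $i>j$ is reduced to showing $p(i,i)>0$ and then jumping down in one step. You instead package the support structure into the function $J(i)=\sup\{j:p(i,j)>0\}$ and prove the sharper pointwise facts $J(i)\ge i+1$ on $[a,b)$ and $J(b)\ge b$ via a \emph{trapping} argument: if $J(i)\le i$, monotonicity of $J$ keeps the entire orbit of any outgoing path at or below $i$, which contradicts $i\to b$. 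This immediately produces explicit in-class one-step transitions (the ladder $i\to i+1\to\cdots\to b$ and the drop $b\to j$) rather than deforming an existing path. Your route is arguably more constructive and modular — it exhibits the connecting paths directly and isolates the useful invariant $J$ — while the paper's route is a slightly shorter reduction argument; both are valid and of comparable difficulty. The only thing I'd flag for completeness is that $J(i)$ is well defined because $p(i,0)>0$ for every $i\ge1$ (the sequence $(\xi_k)$ can start with $i$ failures, since no cookie has strength $1$); this is worth a half-sentence so that the equivalence $p(i,j)>0\iff j\le J(i)$ is fully justified.
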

\begin{proof}
Let $i,j \in [a,b]$. In view of (b) of Lemma \ref{lemmmono},
$a\rightarrow b$ implies $i\rightarrow b$ and $a\rightarrow j$.
Therefore $i\rightarrow b \rightarrow a \rightarrow j$ so that
$i\rightarrow j$. Thus, there exists a path in $\N$:
\begin{equation}\label{lepath}
i=n_0,n_1,\ldots,n_m=j
\end{equation}
such that $p(n_{t-1},n_{t})>0$ for all $t$. It remains to show that
this path may be chosen in $[a,b]$. We separate the two cases $i\leq
j$ and $i>j$.

\textbf{Case} $\mathbf{i\leq j}$. In this case, the path
(\ref{lepath}) from $i$ to $j$ may be chosen non decreasing
(\textit{i.e.} $n_{t-1} \leq n_{t}$). Indeed, if there exists $0< t
< m$ such that $n_{t-1}>n_{t}$, then, according to (a) of Lemma
\ref{lemmmono}, $p(n_{t},n_{t+1})>0$ implies that
$p(n_{t-1},n_{t+1})>0$. Therefore, $n_{t}$ can be removed from the
path. Concerning the last index, note that, if $n_{m-1}>n_m$, then
we can remove  $n_{m-1}$ from the path since $p(n_{m-2},n_m)>0$.

\textbf{Case} $\mathbf{i>j}$. According to the previous case, there
exists a non decreasing path from $i$ to $i$. This implies $p(i,i)
>0$ and therefore $p(i,j)>0$ whenever $j<i$. Thus, there exists a
path (of length $1$) from $i$ to $j$ contained in $[a,b]$.
\end{proof}

We now define
$$
I\defeq\{i\ge 0,\; p(i,i)>0\} = \{i\ge 0,\; i\leftrightarrow i\}.
$$
On $I$, the relation $\leftrightarrow$ is an equivalence relation.
In view of the previous lemma, we see that the equivalence classes
for this relation must be intervals of $\N$. Note that $\{0\}$ is
always an equivalence class since $0$ is absorbent. Moreover, we
have already noticed that, for $i,j\ge M+1$, $p(i,j)>0$
\emph{c.f.} (\ref{posprob}). Therefore, there is exactly one
infinite class of the form $[a,\infty)$ for some $a\le M+1$. In
particular, there are only a finite number of equivalence classes. We summarize these results in the following definition.
\begin{defi}\label{pk}
Let $K+1$ be the number of equivalence classes of $\leftrightarrow$
on $I$. We denote by $(l_i)_{1\le i\le K}$ and $(r_i)_{1\le i\le K}$
the left (resp. right) endpoints of the equivalence classes:
\begin{itemize}
\item The equivalence classes of $\leftrightarrow$ on I are $\{0\},[l_1,r_1],\ldots,[l_{K-1},r_ {K-1}],[l_{K},r_K)$.
\item  $0<l_1\le r_1<l_2\leq r_2 <\ldots\le r_{K-1}<l_{K} < r_K = \infty$.
\item We have $l_{K} \leq M+1$.
\end{itemize}
We denote by $(P_k,1\le k \le K)$ the sub-matrices of $P$ defined by
$P_k \defeq (p(i,j))_{l_k\le i,j \le r_k}$. By construction, the
$(P_k)$ are irreducible sub-stochastic matrices and $P$ has the form
\begin{equation*}
\hbox{\Large{$P$}}=\hbox{\tiny{$\left(\begin{array}{lllllll}
\boxed{1} & 0  \hspace*{0.8cm}\ldots & \ldots & \ldots & \ldots & \ldots & 0 \\
\mbox{\small{$*$}} & \begin{array}{ccc} 0^{\vphantom{X^X}}  & \ldots & 0 \\  \vdots & \ddots &  \vdots \\ \mbox{\small{$*$}} & \ldots &  0_{\vphantom{X}}\\\end{array}& \ddots & & & & \vdots \\
\vdots &  \ddots& \mbox{\Large{$\boxed{\;{P_1^{\vphantom{X^X}}}_{\vphantom{X}}\;}$}}&\ddots & & \mbox{\Huge{$0$}}& \vdots\\
\vdots &  & \ddots & \begin{array}{ccc} 0^{\vphantom{X^X}}  & \ldots & 0 \\  \vdots & \ddots &  \vdots \\ \mbox{\small{$*$}} & \ldots &  0_{\vphantom{X}}\\\end{array}& \ddots & & \vdots\\
\vdots & \hspace*{1cm}\mbox{\Huge{$*$}} &&  \ddots& \mbox{\Large{$\boxed{\;{P_2^{\vphantom{X^X}}}_{\vphantom{X}}\;}$}}& \ddots & \vdots\\
 \vdots&  &   &  & \ddots & \begin{array}{ccc} 0^{\vphantom{X^X}}  & \ldots & 0 \\  \vdots & \ddots &  \vdots \\ \mbox{\small{$*$}} & \ldots &  0_{\vphantom{X}}\\\end{array} & 0\\
\mbox{\small{$*$}}& \ldots & \ldots & \ldots & \ldots &
\hspace*{0.2cm}\ldots \hspace*{0.8cm}
\mbox{\small{$*$}}&\begin{array}{|c}\hline
\mbox{\Large{${P_K}^{\vphantom{X^{X^X}}}$}}\\\mbox{ }\\ \mbox{(infinite class)}\\\end{array}
\end{array}\right)
$}}.
\end{equation*}
\end{defi}

\begin{rem}\label{remirredclasses}
The sequences $(l_i)_{1\le i\le K}$ and $(r_i)_{1\le i\le K-1}$ can
be explicitly expressed in terms of the positions of the zeros in
the vector $(p_1,\ldots,p_M)$. By construction, we have
\begin{eqnarray*}
\{l_i,\;1\le i\le K\}&=&\{n\ge 1, p(n,n)>0 \mbox{ and } p(n-1,n)=0\}\\
\{r_i,\;1\le i\le K-1\}&=&\{n\ge 1, p(n,n)>0 \mbox{ and }
p(n,n+1)=0\},
\end{eqnarray*}
which we may rewrite in terms of the cookie vector:
\begin{eqnarray*}
\{l_i,\;1\le i\le K\}&=&\{n\ge 1, \sharp\{1\le j\le 2n-1, p_j=0\}=n-1 \mbox{ and } p_{2n-1}\neq 0\}\\
\{r_i,\;1\le i\le K-1\}&=&\{n\ge 1, \sharp\{1\le j\le 2n-1,
p_j=0\}=n-1 \mbox{ and } p_{2n}=0\}.
\end{eqnarray*}
For example, if there is no cookie with strength $0$, then $K=1$ and
$l_1=1$. Conversely, if all the $p_i$'s have strength $0$ (the
digging random walk case), then $K=1$ and $l_1=M+1$.
\end{rem}

\subsection{The process $Z$}\label{sectionZ}
In order to study the branching Markov chain $L$ introduced in the
previous section, it is convenient to keep track of the typical
evolution of a particle of $L$: fix a deterministic sequence
$(j_i)_{i\geq 0}\in \{1,\dots,b\}^{\N}$ and set
\begin{equation*}
\left\{
\begin{array}{ll}
x_0  \;\defeq\; o,&\\
x_{i+1}  \;\defeq\; \, \fils{x_i}{j_i}&\hbox{for $i\geq 0$.}
\end{array}
\right.
\end{equation*}
Define the process $Z = (Z_n)_{n\geq0}$ by
\begin{equation*}
Z_n \;\defeq\; \ell(x_n).
\end{equation*}
According to (c) of Lemma \ref{transL}, given a particle $x$ located
at $\ell(x)$, the positions of its $b$ children have the same law.
Therefore, the law of $Z$ does not depend on the choice of the
sequence $(j_i)_{i\geq 0}$. Moreover, Lemma \ref{transL} yields:
\begin{duge}Under $\P_i$, the process $Z$ is a Markov
chain starting from $i$, with transition matrix $P$ given in Definition \ref{defxip}.
\end{duge}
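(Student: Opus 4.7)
The plan is to verify directly that $Z$ has the two defining properties of a Markov chain with the announced transition matrix: the correct initial distribution, and the one-step conditional law $\P\{Z_{n+1}=j\mid Z_0,\ldots,Z_n\}=p(Z_n,j)$. Everything we need is already packaged in Lemma~\ref{transL}; no new computation is required, only careful bookkeeping of conditionings.

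First I would note that the initial condition is immediate: $Z_0=\ell(x_0)=\ell(o)$, which equals $i$ by item (a) of Lemma~\ref{transL} applied with $k_0=i$ (under $\P_i$). So $Z_0=i$ almost surely.

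For the Markov property, I would work inside the larger filtration $(\mathcal{F}_n^L)_{n\ge 0}$ generated by the whole branching process up to generation $n$, namely $\mathcal{F}_n^L=\sigma(L_0,\ldots,L_n)$. Since the vertices $x_0,\ldots,x_n$ are deterministic and $Z_k=\ell(x_k)$ is a coordinate of $L_k$, the natural filtration of $Z$ is a sub-filtration of $\mathcal{F}^L$. It therefore suffices to show
\begin{equation*}
\P_i\bigl\{Z_{n+1}=j\;\big|\;\mathcal{F}_n^L\bigr\} \;=\; p(Z_n,j),
\end{equation*}
and the claimed identity for the smaller filtration follows by conditioning. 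Now $Z_{n+1}=\ell(\filsex{x}{n}{j_n})$, and by item (c) of Lemma~\ref{transL}, the conditional law of $\ell(\filsex{x}{n}{k})$ given $\mathcal{F}_n^L$ depends only on $\ell(x_n)=Z_n$ and is given precisely by $p(Z_n,\cdot)$, independently of the particular choice of $k\in\{1,\ldots,b\}$ by the symmetry built into the transition kernel. Substituting $k=j_n$ yields the required identity.

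The only mildly subtle point—and it is really just bookkeeping—is to justify that the choice of the deterministic sequence $(j_i)_{i\geq 0}$ does not affect the law of $Z$; this is where the symmetry in (c) of Lemma~\ref{transL} is used. Once this is observed, the two items above combine to give that $Z$ is a Markov chain on $\N$, started at $i$ under $\P_i$, with one-step transitions $p(\cdot,\cdot)$, which is the content of the lemma. There is no genuine obstacle here: the statement is essentially a restatement of Lemma~\ref{transL} along a single ray of the tree.
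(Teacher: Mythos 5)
Your proof is correct and takes essentially the same approach as the paper, which provides no explicit proof for this lemma but presents it as an immediate consequence of Lemma~\ref{transL}: the paper observes the symmetry of item (c) to conclude the law of $Z$ is independent of the ray $(j_i)$, then simply writes ``Lemma~\ref{transL} yields'' the statement. You have filled in the bookkeeping that the paper leaves implicit — in particular the tower-property step passing from the larger filtration $\sigma(L_0,\ldots,L_n)$ to the natural filtration of $Z$ — and the argument is sound.
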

Let us note that, if $Z_n$ is in some irreducible class $[l_k,
r_k]$, it follows from Lemma \ref{lemmmono} that $Z_m \leq r_k$ for
all $m\geq n$. Thus, $Z$ can only move from an irreducible class
$[l_k, r_k]$ to another class $[l_{k'}, r_{k'}]$ where $k' < k$.
Recall also that $\{0\}$ is always an irreducible class (it is the
unique absorbing state for $Z$). We introduce the absorption time
\begin{equation}\label{defT0}
T_0  \;\defeq\;\inf\{k\geq 0,\; Z_k = 0\}.
\end{equation}

\begin{duge}\label{couplage} Assume that the cookie environment is such that  $q<
b/(b+1)$. Let $i_0 \in \N$, we have
\begin{enumerate}
\item[\textup{(a)}] $T_0 < \infty$ $\P_{i_0}$-a.s.
\item[\textup{(b)}] For any $\alpha>0$, $\sup_{n}\E_{i_0}[Z_n^\alpha] <
\infty$.
\end{enumerate}
\end{duge}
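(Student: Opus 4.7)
The key observation is that when $q<b/(b+1)$, the ratio $\rho \defeq q/(b(1-q))$ is strictly less than $1$, and this is exactly the ratio governing the expected number of values ``$1$'' between two failures in the tail of the sequence $(\xi_k)_{k\ge 1}$. The strategy is to show that $Z$ is a contracting Markov chain on $\N$ with a drift towards $0$, then combine this drift with the fact that $0$ is accessible from every state in one step to conclude almost-sure absorption and bounded moments.

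First, I would establish the fundamental drift estimate. Decompose $Z_{n+1}=A+B$, where $A$ counts the values $\xi_k=1$ in the ``cookie segment'' $k\le M$ and $B$ counts them in the ``post-cookie segment'' $M<k\le\gamma_i$. Clearly $A\le M$, and setting $F\defeq\sharp\{1\le k\le M,\,\xi_k=0\}$, the law of $B$ given $(\xi_1,\ldots,\xi_M)$ is the number of $1$'s before the $(i-F)^+$-th failure in an i.i.d.\ sequence with failure probability $1-q$ and success-is-$1$ conditional probability $1/b$. A direct computation (each inter-failure block contributes $\rho$ in expectation) gives $\E[B\,|\,\xi_1,\ldots,\xi_M]=(i-F)^+\rho$. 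Taking expectations yields
\begin{equation*}
\E_i[Z_1]\;\le\;\rho\, i+M,\qquad \rho<1.
\end{equation*}

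For (b), I would prove $\sup_n\E_{i_0}[Z_n^m]<\infty$ by induction on integers $m\ge 1$ (the non-integer case $\alpha$ then follows from Jensen). The induction step requires the sharper bound $\E_i[Z_1^m]\le \rho^m i^m+C_m(i^{m-1}+1)$. This comes from binomially expanding $(A+B)^m$ with $A\le M$, writing $B=(i-F)\rho+(B-(i-F)\rho)$ inside each $\E[B^k]$, and using that $B-(i-F)\rho$ is a centered sum of $(i-F)$ i.i.d.\ variables with all moments finite (a compound geometric--Bernoulli), so that its $k$-th central moment is $O(i^{k/2})$. The point is that the leading coefficient is exactly $\rho^m$ (a crude bound $(A+B)^m\le 2^m(A^m+B^m)$ would give $2^m\rho^m$ and ruin the contraction). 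Combined with the inductive boundedness of $\E[Z_n^{m-1}]$, the recursion $a_{n+1}\le\rho^m a_n+K_m$ with $\rho^m<1$ gives the claim.

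For (a), I would use a Foster-type supermartingale argument. Pick $K$ large enough that $\rho i+M\le i-1$ for all $i>K$ (e.g. $K=\lceil(M+1)/(1-\rho)\rceil$) and set $T_K\defeq\inf\{n\ge 0,\,Z_n\le K\}$. Then $Z_{n\wedge T_K}+(n\wedge T_K)$ is a non-negative supermartingale, so Doob's optional stopping gives $\E_{i_0}[T_K]\le i_0<\infty$. To upgrade $T_K<\infty$ to $T_0<\infty$ almost surely, note that for each $j\in\{1,\ldots,K\}$ the probability $p(j,0)$ is strictly positive (the realization where $\xi_1=\cdots=\xi_j=0$ already contributes positively), so $c\defeq\min_{1\le j\le K}p(j,0)>0$. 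Defining successive return times $\tau_n$ to $[0,K]$ (each a.s.\ finite by iterating the previous step via the strong Markov property), at each $\tau_n$ with $Z_{\tau_n}>0$ the next step hits $0$ with probability at least $c$; a geometric-trials argument then forces $Z$ to reach $0$ almost surely.

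\textbf{Main obstacle.} The computation itself is elementary, but the delicate point is the higher-moment induction: one must extract the exact constant $\rho^m$ in the drift of $Z^m$, for otherwise the induction fails. Keeping track of this requires carefully exploiting $A\le M$ and the CLT-scale fluctuations of $B$ around its mean, rather than treating $A$ and $B$ symmetrically.
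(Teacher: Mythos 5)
Your proof is correct and takes essentially the same route as the paper. The paper first couples $Z$ to the cleaner auxiliary chain $\tilde Z$ (a branching process with geometric offspring of mean $c=\rho$ and $M$ immigrants per generation, so that $\P\{Z_{n+1}\le j\,|\,Z_n=i\}\ge\P\{\tilde Z_{n+1}\le j\,|\,\tilde Z_n=i\}$), and then runs the drift, supermartingale, and moment-recursion arguments entirely on $\tilde Z$; you instead perform the same decomposition directly on $Z$, splitting $Z_{n+1}$ into a cookie contribution $A\le M$ and a post-cookie contribution $B$ which is a sum of at most $i$ i.i.d.\ geometrics of mean $\rho$. These are the same estimate written in two equivalent forms, and both your Foster-type argument for (a) and your ``extract the leading coefficient $\rho^m$'' induction for (b) are the same calculations the paper carries out for $\tilde Z$ (there $\E[(\mathcal G_1+\cdots+\mathcal G_i+M)^{\alpha+1}]=c^{\alpha+1}i^{\alpha+1}+Q(i)$ with $\deg Q\le\alpha$). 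The coupling in the paper is a small cosmetic gain — it lets one cite the simpler Markov chain once and for all rather than re-deriving the $A+B$ split at each step — but buys nothing essential; your direct version is equally valid.
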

\begin{proof}
The proof of the lemma is based on a coupling argument. Recall
Definition \ref{defxip} and notice that the sequence $(\xi_k)_{k\geq
M+1}$ is i.i.d. Thus, for any stopping time $\tau$ such that
$\tau\geq M+1$ a.s., the number of random variables in the
sub-sequence $(\xi_k)_{k>\tau}$ taking value $1$ before the first
failure in this sub-sequence has a geometric distribution with
parameter
\begin{equation*}
s \;\defeq\;\P\{\xi_{M+1}=1\;|\;\xi_{M+1}\in\{0,1\}\}=
\frac{q}{q+b(1-q)}.
\end{equation*}
It follows that, for any $i$, the number of random variables in the
sequence $(\xi_k)_{k\geq 1}$ taking value $1$ before the
$i^{\hbox{\tiny{th}}}$ failure is stochastically dominated by
$M+\mathcal{G}_1 + \ldots + \mathcal{G}_i$ where
$(\mathcal{G}_{k})_{k\geq 1}$ denotes a sequence of i.i.d. random
variables with geometric distribution \emph{i.e.}
\begin{equation*}
\P\{\mathcal{G}_{k} = n\} = (1-s)s^{n} \quad\hbox{for $n\geq 0$.}
\end{equation*}
This exactly means that, conditionally on $Z_n = i$, the
distribution of $Z_{n+1}$ is stochastically dominated by
$\mathcal{G}_1 + \ldots + \mathcal{G}_i + M$. Let us therefore
introduce a new Markov chain $\tilde{Z}$ with transition
probabilities
\begin{equation*}
\P\{\tilde{Z}_{n+1} = j \hbox{ | }\tilde{Z}_n =i\} = \P\{
\mathcal{G}_1 + \ldots + \mathcal{G}_i + M = j\},
\end{equation*}
It follows from the stochastic domination stated above
that we can construct both processes $Z$ and $\tilde{Z}$ on the
same space in such way that, under $\P_{i_0}$, almost surely,
\begin{equation}\label{stodom}
Z_0 = \tilde{Z}_0 = i_0\quad \hbox{and}\quad Z_n \leq
\tilde{Z}_n\hbox{ for all $n\geq 1$}.
\end{equation}
The process $\tilde{Z}$ is a branching process with geometric
reproduction and with $M$ immigrants at each generation. Setting
$$
c \;\defeq\; \frac{q}{b(1-q)}  = \E[\mathcal{G}_1],
$$
we get
\begin{equation}\label{recform}
\E[\tilde{Z}_{n+1}\hbox{ | }\tilde{Z}_n]  = c\tilde{Z}_n + M.
\end{equation}
When $q< b/(b+1)$, we have $c<1$ so that $\tilde{Z}_n \geq M/(1-c)$
implies $\E[\tilde{Z}_{n+1}\hbox{ | }\tilde{Z}_n]  \leq
\tilde{Z}_n$. Therefore, the process $\tilde{Z}$ stopped at its
first hitting time of $[0,M/(1-c)]$ is a positive super-martingale
which converges almost surely. Since no state in $(M/(1-c),\infty)$
is absorbent for $\tilde{Z}$, we deduce that $\tilde{Z}$ hits the
set $[0,M/(1-c)]$ in finite time. Using the Markov property of
$\tilde{Z}$, it follows that $\tilde{Z}$ returns below $M/(1-c)$
infinitely often, almost surely. Since $Z \leq\tilde{Z}$, the same
result also holds for $Z$. Furthermore, the process $Z$ has a
strictly positive probability of reaching $0$ from any $i\leq M/(1-c)$
in one step (because no cookie has strength $1$). Thus $Z$ reaches
$0$ in finite time. This entails (a).

Concerning assertion (b), it suffices to prove the result for the
process $\tilde{Z}$ when $\alpha$ is an integer. We prove the result
by induction on $\alpha$. For $\alpha=1$, equation (\ref{recform})
implies $\E[\tilde{Z}_{n+1}]  = c\E_{i_0}[\tilde{Z}_n] + M$ so that
$$
\sup_n\E_{i_0}[\tilde{Z}_{n}] \leq \max(i_0, M/(1-c)).
$$
Let us now assume that, for any $\beta \le \alpha$,
$\E_{i_0}[\tilde{Z}_{n}^{\beta}]$ is uniformly bounded in $n$. We
have
\begin{eqnarray}
\nonumber\E_{i_0}[Z_{n+1}^{\alpha+1}] &=& \E_{i_0}[
\E[(\mathcal{G}_1+\ldots+\mathcal{G}_{Z_n}+ M)^{\alpha+1}|Z_n]]\\
\label{egrec}&=& c^{\alpha+1}\E_{i_0}[Z_n^{\alpha+1}] +
\E_{i_0}[Q(Z_n)]
\end{eqnarray}
where $Q$ is a polynomial of degree at most $\alpha$. Therefore the
induction hypothesis yields $\sup_n |\E_{i_0}[Q(Z_n)]| < \infty$. In
view of (\ref{egrec}), we conclude that  $\sup_n
\E_{i_0}[Z_{n}^{\alpha+1}] < \infty$.
\end{proof}
The following lemma roughly states that $Z$ does not reach $0$ with a
"big jump".
\begin{duge}\label{lemmlk}Assume that the cookie environment is such that
$q<b/(b+1)$. Recall that $[l_K,\infty)$ denotes the unique infinite
irreducible class of $Z$. We have
$$
\inf_{j\geq l_k}\P_j\{\exists n\ge 0,Z_n=l_K\} > 0.
$$
\end{duge}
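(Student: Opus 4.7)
The plan is to reduce, via Lemma~\ref{couplage}(a) and the strong Markov property, to a uniform one-step estimate on the kernel $P$, and then to prove that estimate by a direct computation based on the combinatorial description of $P$ in Definition~\ref{defxip}.

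First, set $\tau := \inf\{n \geq 0 : Z_n \leq l_K\}$. By Lemma~\ref{couplage}(a), under $\P_j$ the chain $Z$ reaches the absorbing state $0$ almost surely, so $\tau < \infty$ $\P_j$-a.s. for every $j \geq l_K$. Since $\{Z_\tau = l_K\} \subset \{\exists n \geq 0,\, Z_n = l_K\}$, it suffices to prove
\[
\inf_{j \geq l_K} \P_j(Z_\tau = l_K) \;>\; 0.
\]
The case $j = l_K$ is immediate ($\tau = 0$). For $j > l_K$, the definition of $\tau$ forces $Z_{\tau - 1} > l_K$, and the Markov property yields
\[
\P_j(Z_\tau = l_K) \;=\; \E_j\!\left[\frac{p(Z_{\tau - 1},\, l_K)}{\sum_{k = 0}^{l_K} p(Z_{\tau - 1},\, k)}\right],
\]
so the problem reduces to establishing the uniform one-step bound
\[
\inf_{i > l_K} \frac{p(i,\, l_K)}{\sum_{k = 0}^{l_K} p(i,\, k)} \;>\; 0. \qquad (\star)
\]

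To prove $(\star)$ I would show the stronger claim that there exists a constant $C = C(M, l_K, q, b)$ with $p(i, k) \leq C\, p(i, l_K)$ for every $k \in \{0, \ldots, l_K\}$ and every $i > l_K$; summing over $k$ then gives $(\star)$. In the baseline case $M = 0$, the sequence $(\xi_n)$ is iid and the number of $\xi$'s equal to $1$ between two consecutive failures is geometric with parameter $1 - s$, where $s := q/(q + b(1 - q))$; hence $p(i, k) = \binom{i + k - 1}{k} s^k (1 - s)^i$ is a negative binomial probability, and a direct calculation gives the explicit estimate $p(i, k)/p(i, l_K) \leq s^{-(l_K - k)}$, uniformly in $i > l_K$. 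For general $M \geq 1$, one decomposes $p(i, k)$ according to the number of failures and of ones produced by the first $M$ (possibly biased) entries of $(\xi_n)$; each term in this finite sum factorises as a prefix probability times an iid-tail probability of the $M = 0$ form, and applying the previous estimate term-by-term yields the desired comparison.

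The main obstacle is the combinatorial bookkeeping in the case $M \geq 1$: one must track all prefix patterns (noting that, since $l_K \leq M + 1$, up to $l_K - 1$ of the required ones may already be produced by the biased prefix) and combine the resulting estimates into a single uniform constant. However, because the number of distinct prefix patterns is bounded by a function of $M$ and $l_K$ alone, and each contributes a uniform estimate via the $M = 0$ case, this bookkeeping is finite and yields the required uniform lower bound in $(\star)$, completing the proof.
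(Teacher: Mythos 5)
Your proof is essentially the paper's: both reduce, via Lemma~\ref{couplage}(a) and the strong Markov property at a first-passage time, to a uniform one-step comparison on the kernel $P$, and both prove that comparison by exploiting the iid structure of $(\xi_n)_{n>M}$ (replacing a terminal failure by a run of ones). The one real difference is the choice of reference state. The paper stops at $\sigma := \inf\{n>0 : Z_n \leq M+1\}$, proves $p(i,k) \leq (b/q)^{M+1}\,p(i,M+1)$ for all $i > M+1$ and $k\le M+1$, and then hops from $M+1$ to $l_K$ in one extra step ($p(M+1,l_K)>0$ since $M+1>M$). You target $l_K$ directly, so you need $p(i,k)\leq C\,p(i,l_K)$ for all $i>l_K$. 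The paper's reference $M+1$ is the cleaner choice precisely because of a small gap in your argument: for $i>M+1$ the $i$-th failure necessarily lands past position $M$, so every realization contributing to $p(i,k)$ has an iid tail available to modify; whereas with $l_K\le M+1$ (and possibly $l_K<M$) the infimum in $(\star)$ also ranges over $l_K<i\leq M$, and for those $i$ the biased prefix $(\xi_1,\dots,\xi_M)$ can already contain all $i$ failures --- this is the $\P\{\mathcal{E}_{i,k}\}$ term in Lemma~\ref{coefP} --- so the claimed ``prefix-probability times iid-tail'' factorization does not literally hold. The fix is easy and should be stated: for the finitely many $l_K<i\le M$, irreducibility of the class $[l_K,\infty)$ gives $p(i,i)>0$, hence $p(i,l_K)>0$ by Lemma~\ref{lemmmono}(a), so each ratio $p(i,k)/p(i,l_K)$ is finite and one takes the maximum over this finite set. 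With that sentence added, your proof goes through.
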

\begin{proof}
We introduce the stopping time
\begin{equation*}
\sigma\;\defeq\;\inf\{n> 0,\; Z_n\le M+1\}.
\end{equation*}
We are going to prove that
\begin{equation}\label{minzsigma}
\inf_{j>M+1}\P_j\{Z_\sigma=M+1\} >0.
\end{equation}
This will entail the lemma since $\P_{M+1}\{Z_1=l_K\}>0$ (recall
that $l_K \le M+1$). According to (a) of Lemma \ref{couplage},
$\sigma$ is almost surely finite from any starting point $j$ so we
can write
\begin{eqnarray}
\nonumber 1&=&\sum_{k=0}^{M+1}\sum_{i=M+2}^\infty \P_j\{Z_{\sigma-1}=i \mbox{ and } Z_{\sigma}=k\}\\
\label{equasigma}&=& \sum_{k=0}^{M+1}\sum_{i=M+2}^\infty
\P_j\{Z_{\sigma-1}=i\} \frac{p(i,k)}{\sum_{j=0}^{M+1} p(i,j)}.
\end{eqnarray}
Let us for the time being admit that, for $i>M+1$ and $k\in
\{0,\ldots,M+1\}$,
\begin{equation}\label{equamajpik}
p(i,k)\le \left(\frac{b}{q}\right)^{M+1}p(i,M+1).
\end{equation}
Then, combining (\ref{equasigma}) and (\ref{equamajpik}),  we get
\begin{eqnarray*}
1&\le& \left(\frac{b}{q}\right)^{M+1}(M+2)\sum_{i=M+2}^\infty \P_j\{Z_{\sigma-1}=i\} \frac{p(i,M+1)}{\sum_{j=0}^{M+1} p(i,j)}\\
&=& \left(\frac{b}{q}\right)^{M+1}(M+2)\P_j\{Z_{\sigma}=M+1\},
\end{eqnarray*}
which yields (\ref{minzsigma}). It remains to prove
(\ref{equamajpik}). Recalling Definition \ref{defxip}, we have
\begin{equation*}
p(i,k)=\sum_{n=M}^\infty \sumst{e_1,\ldots,e_n \; \textup{s.t.}}{\sharp\{j\le n, e_j=1\}=k}{\sharp\{j\le n, e_j=0\}=i-1}\P\{\xi_1=e_1,\ldots,\xi_n=e_n\}\P\{\xi_{n+1}=0\}.\\
\end{equation*}
Keeping in mind that  $(\xi_j)_{j\ge M+1}$ are i.i.d. with
$\P(\xi_j=1)=q/b$, we get, for $n\ge M$,
$$\P\{\xi_{n+1}=0\}=\left(\frac{b}{q}\right)^{M+1-k}\P\{\xi_{n+1}=1,\ldots,\xi_{n+M+1-k}=1\}\P\{\xi_{n+M+2-k}=0\}.$$
Thus,
\begin{eqnarray*}
p(i,k) &\le &
\left(\frac{b}{q}\right)^{M+1-k}\sum_{\tilde{n}=M}^\infty \;
\sumst{e_1,\ldots,e_{\tilde{n}}\; \textup{s.t.}}{\sharp\{j\le
\tilde{n}, e_j=1\}=M+1}{\sharp\{j\le \tilde{n}, e_j=0\}=i-1}
\P\{\xi_1=e_1,\ldots,\xi_{\tilde{n}}=e_{\tilde{n}}\}\P\{\xi_{\tilde{n}+1}=0\}\\
& \le & \left(\frac{b}{q}\right)^{M+1}p(i,M+1).
\end{eqnarray*}
\end{proof}

\section{Proof of Theorem \ref{MainTheo}}
The monotonicity result of Theorem \ref{MainTheo} was proved in
Corollary \ref{coromono}. It remains to prove the
recurrence/transience criterion. The proof is split into four
propositions: Proposition \ref{proptheo1A}, \ref{proptheo1B},
\ref{proptheo1C} and \ref{proptheo1D}.

\begin{defi}\label{defSpec}
Given an irreducible non negative matrix $Q$, its spectral radius is
defined as:
\begin{equation*}
\lambda=\lim_{n\rightarrow \infty}
\left(q^{(n)}(i,j)\right)^{\frac{1}{n}},
\end{equation*}
where $q^{(n)}(i,j)$ denotes the $(i,j)$ coefficient of the matrix
$Q^n$. According to Vere-Jones \cite{Verejones67}, this quantity is
well defined and is independent of  $i$ and $j$.
\end{defi}

When $Q$ is a finite matrix, it follows from the classical
Perron-Frobenius theory that  $\lambda$ is the largest positive
eigenvalue of $Q$. In particular, there exist left and right
$\lambda$-eigenvectors with positive coefficients. However, when $Q$
is infinite, the situation is more complicated. In this case, one
cannot ensure, without additional assumptions, the existence of left
and right eigenvectors associated with the value $\lambda$.
Yet, we have the following characterization of $\lambda$ in terms of
right sub-invariant vectors (\emph{c.f.} \cite{Verejones67}, p$372$):
\begin{itemize}
\item $\lambda$ is the smallest value for which there exists a vector $Y$ with strictly positive coefficients such that $QY\le \lambda
Y$.
\end{itemize}
By symmetry, we have a similar characterization with left
sub-invariant vectors. Let us stress that, contrarily to the finite
dimensional case, this characterization does not apply to
super-invariant vectors: there may exist a strictly positive vector
$Y$ such that $QY\ge \lambda'Y$ for some $\lambda'>\lambda$. For
more details, one can refer to \cite{Seneta73,Verejones67}.

\medskip

Recall that, according to Definition \ref{pk}, $P_1,\ldots,P_K$
denote the irreducible sub-matrices of $P$. Let
$\lambda_1,\ldots,\lambda_K$ stand for their associated spectral
radii. We denote by $\lambda$ the largest spectral radius of these sub-matrices:
\begin{equation}\label{deflambda}
\lambda \defeq \max(\lambda_1,\ldots,\lambda_K).
\end{equation}

\subsection{Proof of recurrence}

\begin{prop}\label{proptheo1A}
Assume that the cookie environment $\mathcal{C} = (p_1,\ldots,p_M\,;
q)$ is such that
$$
q<\frac{b}{b+1} \quad\hbox{and}\quad \lambda \leq \frac{1}{b}.
$$
Then, the cookie random walk is recurrent.
\end{prop}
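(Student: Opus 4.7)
By Corollary \ref{coroequiv}, it suffices to show that, under $\P_{M+1}$, the branching Markov chain $L$ dies out almost surely. My plan is a Lyapunov (sub-invariant vector) argument: I will construct a non-negative function $f:\N\to[0,\infty)$ with $f(0)=0$ satisfying
\begin{equation*}
b\sum_{j\ge 0} p(i,j)\, f(j) \;\le\; f(i) \qquad \text{for every } i \ge 1 \text{ in the support of } f.
\end{equation*}
By Lemma \ref{transL}, such an $f$ makes $M_n := \sum_{x\in\T_n} f(\ell(x))$ a non-negative supermartingale for the natural filtration of $L$, which therefore converges almost surely to a finite limit $M_\infty$. Extinction of $L$ will then be extracted from this convergence.

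To build $f$ I use the block decomposition of $P$ from Definition \ref{pk}. For each irreducible sub-matrix $P_k$, the bound $\lambda_k \le \lambda \le 1/b$ together with Vere-Jones' characterization of the spectral radius (recalled just after Definition \ref{defSpec}) supplies a strictly positive right sub-invariant vector $Y^{(k)}$ on $[l_k,r_k]$ with $b P_k Y^{(k)} \le Y^{(k)}$. I then set $f = \alpha_k Y^{(k)}$ on each $[l_k, r_k]$, and extend $f$ to the transient integers of $P$ (those not in any $[l_k,r_k]$) by propagating the inequality downward. The scalars $\alpha_k > 0$ are chosen recursively, from the top class downward, in order to absorb the off-diagonal contributions coming from the transitions of $P$ into strictly lower classes; when $\lambda_k < 1/b$ a slack of $1-b\lambda_k > 0$ makes this easy, whereas in the critical regime $\lambda_k = 1/b$ one is forced to take $f \equiv 0$ on every class strictly below the lowest critical one.

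The main obstacle is precisely this critical regime. With $f$ vanishing on all classes below the lowest critical one, $M_n$ only controls particles sitting in critical-or-higher classes, so descendants that fall into strictly subcritical classes must be treated separately. Any such descendant, however, generates a sub-branching process whose mean matrix has spectral radius strictly less than $1$, so it dies out almost surely by classical multi-type branching theory, and an induction on the finite number $K$ of classes of $P$ then closes the gap. Finally, to convert $M_n \to M_\infty < \infty$ into genuine extinction of the particles located in critical-or-higher classes, I will combine Lemma \ref{lemmlk} (the tagged particle $Z$ visits $l_K$ with positive probability from any state of the infinite class) with the moment estimate of Lemma \ref{couplage}(b), so as to produce a positive lower bound for $f$ on $[l_K,\infty)$, after which finiteness of $M_\infty$ forces the population there to be eventually zero.
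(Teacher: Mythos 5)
Your route is recognizably different from the paper's, and the heart of the difference is where the gap lies: you try to construct a single Lyapunov function $f$ on all of $\N$ and run one global supermartingale argument, whereas the paper treats one irreducible class and one target level $j_0\in[l_k,r_k]$ at a time, passes to the \emph{modified} process $\tilde L$ in which particles arriving at $j_0$ are frozen and stop branching, uses the supermartingale only to conclude $\E_{j_0}[\tilde N(\infty)]\le 1$, and then recognizes the total number of visits to $j_0$ as the total progeny of a Galton--Watson process with offspring law $\tilde N(\infty)$. That last step is exactly what handles the critical equality $\lambda_k=1/b$: a (sub)critical non-degenerate Galton--Watson process dies out, so each level $j_0$ is hit finitely often, and the infinite class is disposed of afterwards by a contradiction argument through Lemma \ref{lemmlk}, followed by a finite induction over the classes.

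The gap in your plan is at the final step, where you assert that ``finiteness of $M_\infty$ forces the population there to be eventually zero.'' A nonnegative supermartingale converging a.s.\ to a finite limit does not, by itself, say its limit is $0$, nor that the underlying configuration becomes trivial. In the strictly subcritical regime $\lambda<1/b$ you can rescue this: the slack gives $\E[M_{n+1}\mid\mathcal F_n]\le(1-\delta)M_n$, hence $\E[M_n]\to 0$ and $M_\infty=0$ a.s., and a positive lower bound on $f$ (which you can indeed get on $[l_K,\infty)$ by an optional-stopping argument based on Lemma \ref{lemmlk}) then kills the population. But this collapses in the critical regime $\lambda=1/b$, which is precisely the hard half of the statement. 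There the best you get from $M_\infty<\infty$ and $f\ge c>0$ on $[l_K,\infty)$ is that the number of particles in $[l_K,\infty)$ at generation $n$ is eventually bounded by a random constant; that is compatible with a configuration persisting forever, possibly with positions drifting to $\infty$ (where $f$ need not grow and where the per-particle probability of sending every child to $0$ decays). In other words, an \emph{a priori} bound on $M_n$ is not an extinction proof; you would still need an additional argument of roughly the strength of the paper's freeze-and-count (equivalently, a Galton--Watson total-progeny) trick to conclude. Also note a smaller slip: in the critical case the scalars $\alpha_k$ are forced to vanish strictly below the \emph{highest} critical class, not the lowest, since the class immediately above a critical one has no slack to absorb downward transitions.
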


The proposition is based on the following lemma.
\begin{duge}\label{lemmtheo1}
Let $k\in\{1,\ldots,K\}$ and  assume that $\lambda_k< 1/b$. Then,
for any starting point $\ell(o) =i\in [l_k,r_k]$ and for any $j\in
[l_k,r_k]$, we have
$$\sharp \{x\in \T,\; \ell(x)= j\}<\infty \quad \mbox{$\P_i$-a.s.}$$
\end{duge}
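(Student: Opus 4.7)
The plan is to use a straightforward first moment argument exploiting the branching structure of $L$ together with the definition of the spectral radius.

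First I would compute, for any $n \geq 0$ and $j \geq 0$, the expected number of particles at generation $n$ sitting at position $j$. Since $L$ is a branching Markov chain in which each particle at position $\ell(x)$ has $b$ offspring, each distributed according to $p(\ell(x),\cdot)$ (Lemma \ref{transL}(c)), a routine induction on $n$ gives
\begin{equation*}
\E_{i}\big[\,\sharp\{x\in \T_{n},\;\ell(x)=j\}\,\big]\;=\;b^{n}\,p^{(n)}(i,j).
\end{equation*}

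Next I would identify this transition probability with one inside the irreducible block $P_k$. By Lemma \ref{lemmmono}(b), once the chain $Z$ leaves the class $[l_k,r_k]$ it can only enter classes $[l_{k'},r_{k'}]$ with $k'<k$, so it cannot come back. Hence for any $i,j\in[l_k,r_k]$ and any $n\geq 0$ we have $p^{(n)}(i,j) = p_k^{(n)}(i,j)$, where $p_k^{(n)}(i,j)$ denotes the $(i,j)$-coefficient of $P_k^{n}$.

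Now I would invoke Definition \ref{defSpec}: since $P_k$ is irreducible with spectral radius $\lambda_k$,
\begin{equation*}
\lim_{n\to\infty}\bigl(p_k^{(n)}(i,j)\bigr)^{1/n}\;=\;\lambda_{k}.
\end{equation*}
Choose $\varepsilon>0$ small enough so that $b(\lambda_k+\varepsilon)<1$, which is possible since $\lambda_k<1/b$. Then, for $n$ large enough, $b^{n}p_k^{(n)}(i,j) \leq \bigl(b(\lambda_k+\varepsilon)\bigr)^{n}$, and summing over $n$ gives
\begin{equation*}
\E_{i}\big[\,\sharp\{x\in\T,\;\ell(x)=j\}\,\big]\;=\;\sum_{n\geq 0}b^{n}\,p_k^{(n)}(i,j)\;<\;\infty.
\end{equation*}
A non-negative integer-valued random variable with finite expectation is finite almost surely, which yields the lemma.

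I do not foresee any real obstacle here: the branching structure converts the problem into a pure linear algebra bound, and the restriction $i,j\in[l_k,r_k]$ together with the triangular structure of $P$ displayed in Definition \ref{pk} makes it automatic to pass from $P$ to the irreducible block $P_k$. The only mild subtlety is remembering that the $\limsup$ in the definition of $\lambda_k$ is enough to produce a geometric upper bound valid for all sufficiently large $n$, which is all that is needed for summability.
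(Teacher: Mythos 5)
Your first--moment argument is correct for the lemma exactly as stated: the identity $\E_{i}[\sharp\{x\in\T_n,\;\ell(x)=j\}] = b^{n}p^{(n)}(i,j)$ is right (the $b$ offspring positions are not independent, but only their common marginal $p(\ell(x),\cdot)$ enters the expectation), the triangular structure of $P$ correctly reduces $p^{(n)}(i,j)$ to $p_k^{(n)}(i,j)$ for $i,j\in[l_k,r_k]$, and Definition~\ref{defSpec} plus $\lambda_k<1/b$ gives summability. Finite expectation then gives a.s.\ finiteness.

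That said, the paper takes a genuinely different and strictly stronger route, and the difference matters. The paper's proof works under $\lambda_k\le 1/b$: it takes a right $\lambda_k$-sub-invariant vector $Y_k$ with $P_kY_k\le\tfrac1b Y_k$, builds the non-negative supermartingale $\tilde{\mathcal M}_n=\sum_{x\in\T_n}f(\tilde\ell(x))$ for the process with $j_0$ frozen, deduces $\E_{j_0}[\tilde N(\infty)]\le1$, and then observes that since $\P_{j_0}\{\tilde N(\infty)=1\}<1$ the embedded Galton--Watson process of returns to $j_0$ dies out even in the critical case. Your first--moment argument cannot reach the boundary: when $\lambda_k=1/b$ the series $\sum_n b^n p_k^{(n)}(i,j)$ may well diverge (think of a critical Galton--Watson process, where the expected total progeny is infinite even though extinction is a.s.). This is not an idle generality --- the lemma is invoked in the proof of Proposition~\ref{proptheo1A} under the hypothesis $\lambda\le 1/b$, so the case $\lambda_k=1/b$ really is needed; the strict inequality in the statement of Lemma~\ref{lemmtheo1} appears to be a typo (compare the acknowledgment about the critical case $\lambda(\mathcal C)=1/b$). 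So your proof is a clean, short verification of the sub-critical case, but to serve the role the lemma plays in the paper you would still need the supermartingale/Galton--Watson mechanism to cover equality.
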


\begin{proof}[Proof of Proposition \ref{proptheo1A}.]
We assume that $\lambda\leq 1/b$ and $q< b/(b+1)$. For $k<K$, the
irreducible class $[l_k, r_k]$ is finite. Thus, Lemma \ref{lemmtheo1} insures
that, for any  $i \in [l_k,r_k]$,
\begin{equation}\label{maineqrec}
\sharp \{x\in \T,\; \ell(x) \in [l_k,r_k]\}<\infty \quad\hbox{$\P_i$-a.s.}
\end{equation}
We now show that this result also holds for the infinite class
$[l_K,\infty)$ by using a contradiction argument. Let us suppose
that, for some starting point $\ell(o) = i$,
\begin{equation*}
\P_i\{\sharp \{x\in \T,\; \ell(x) \geq l_K\}=\infty\} = c > 0.
\end{equation*}
Then, for any $n$,
\begin{equation}\label{conthypo}
\P_i\{\exists x\in\T,\; |x|\geq n \hbox{ and } \ell(x)\geq l_K\} \geq
c.
\end{equation}
According to Lemma \ref{lemmlk}, given a particle $x$ located at
$\ell(x) = j \geq l_K$, the probability that one of its descendants
reaches level $l_K$ is bounded away from $0$ uniformly in $j$. In
view of (\ref{conthypo}), we deduce that, for some constant $c'>0$, uniformly in $n$,
\begin{equation*}
\P_i\{\exists x\in\T,\; |x|\geq n \hbox{ and } \ell(x) = l_K\} \geq c'.
\end{equation*}
This contradicts Lemma \ref{lemmtheo1} stating that
\begin{equation*}
\sharp \{x\in \T,\; \ell(x) = l_K\} < \infty \quad\hbox{$\P_i$-a.s.}
\end{equation*}
Thus (\ref{maineqrec}) holds also for the infinite class.

\medskip

We can now complete the proof of the proposition. According to
Corollary \ref{coroequiv}, we just need to prove that the branching
Markov chain $L$ starting from $\ell(o) = M+1$ dies out almost
surely. In view of (\ref{maineqrec}), the stopping time $N =
\inf\{n,\; \forall x\in\T_n\; \ell(x)<l_K\}$ where all the particle
are located strictly below $l_K$ is finite almost surely. Moreover,
if a particle $x$ is located at $\ell(x)=i \in (r_{K-1},l_{K})$
(\textit{i.e.} its position does not belong to an irreducible
class), then, the positions of all its children
$\ell(\fils{x}{1}),\ldots,\ell(\fils{x}{b})$ are strictly below $i$.
Thus, at time $N' = N + (l_K - r_{K-1} - 1)$, all the particles in
the system are located in $[0,r_{K-1}]$. We can now repeat the same
procedure with the irreducible class $[l_{K-1},r_{K-1}]$. Since
there are only a finite number of irreducible classes, we conclude,
by induction, that all the particles of $L$ are at zero in finite
time with probability $1$.
\end{proof}

\begin{proof}[Proof of Lemma \ref{lemmtheo1}.]
Fix $k\leq K$ and $j_0\in [l_k,r_k]$. By irreducibility, if suffices
to prove that
\begin{equation}\label{eq1lemmtheo1A}
\sharp \{x\in \T,\; \ell(x)= j_0\}<\infty \quad\hbox{  $\P_{j_0}$-a.s.}
\end{equation}
Let us note that, when $k\neq K$, the class $[l_k,r_k]$ is finite.
Thus, the process $L$ restricted to $[l_k,r_k]$ (\emph{i.e.} the
process where all the particles leaving this class vanish) is a
multi-type branching process with only a finite number of types.
Using  Theorem $7.1$, Chapter II of \cite{Harris63}, it
follows that this process is subcritical (it has parameter $\rho =
\lambda_k b \leq 1$ with the notation of \cite{Harris63} and is
clearly positive regular and non-singular) and thus it dies out
almost surely, which implies (\ref{eq1lemmtheo1A}). However, this
argument does not apply when $k=K$. We now provide an argument
working for any $k$.

As already mentioned, Criterion I of Corollary 4.1 of
\cite{Verejones67} states that $\lambda_k$ is the smallest value for
which there exists a vector $Y_k =
(y_{l_k},y_{l_{k}+1},\ldots)$, with strictly positive
coefficients such that
\begin{equation*}
P_k Y_k\le \lambda_k Y_k.
\end{equation*}
For $k\neq K$, the inequality above is, in fact, an equality. Since
$\lambda_k\le 1/b$, we get
\begin{equation}\label{subinvy}
P_k Y_k\le \frac{1}{b}Y_k.
\end{equation}
Define the function $f:\N\mapsto \N$ by
\begin{equation*}
f(i)\;\defeq\;\left\{\begin{array}{ll}
y_i & \mbox{ for $l_k\le i \le r_k$}\\
0 & \mbox{ otherwise.}
\end{array}
\right.
\end{equation*}
Recall the definition of the Markov chain $Z$, with transition
matrix $P$, introduced in the previous section. It follows from
(\ref{subinvy}) that, for any $i\in [0,r_k]$,
\begin{equation}\label{equafz}
\E[f(Z_1)\;|Z_0=i] \leq \frac{1}{b}f(i).
\end{equation}
We now consider a
process $\tilde{L}=(\tilde{L}_n,n\ge 0)$ obtained by a slight
modification of the process $L$:
\begin{itemize}
\item $\tilde{L}_0 = L_0 $ \textit{i.e.} $\tilde{\ell}(o) = \ell(o)=j_0$.
\item $\tilde{L}_1 = L_1$.
\item For $n\ge 1$, $\tilde{L}_n$ is a branching Markov chain
with the same transition probabilities as $L$ except at point $j_0$
which becomes an absorbing state without branching \emph{i.e} when a
particle $x$ is located at $\tilde{\ell}(x)=j_0$, then
$\tilde{\ell}(\fils{x}{1}) = j_0$ and $\tilde{\ell}(\fils{x}{2}) =
\ldots = \tilde{\ell}(\fils{x}{b}) = 0$.
\end{itemize}
Following \cite{MenshikovVolkov97}, we consider the process
\begin{equation*}
\mathcal{\tilde{M}}_n=\sum_{x\in\T_n}f(\tilde{\ell}(x))
\end{equation*}
together with the filtration
$\mathcal{F}_n=\sigma(\tilde{\ell}(x),\; x\in\T_{\leq n})$. Using
(\ref{equafz}), we have
\begin{eqnarray*}
\E_{j_0}[\mathcal{\tilde{M}}_{n+1}|\mathcal{F}_n]&=&\sum_{x\in\T_n,\;\tilde{\ell}(x)\neq
j_0}\E[f(\tilde{\ell}(\fils{x}{1}))+\ldots+ f(\tilde{\ell}(\fils{x}{b}))\;|\; \tilde{\ell}(x)] + \sum_{x\in\T_n,\;\tilde{\ell}(x)= j_0} f(\tilde{\ell}(x))\\
&=&b \sum_{x\in\T_n,\;\tilde{\ell}(x) =k \neq
j_0}\E[f(Z_1)\,|Z_0=k]+ \sum_{x\in\T_n,\;\tilde{\ell}(x)= j_0} f(\tilde{\ell}(x))\\
&\leq& \sum_{x\in\T_n,\;\tilde{\ell}(x)\neq
j_0}f(\tilde{\ell}(x))+ \sum_{x\in\T_n,\;\tilde{\ell}(x)= j_0} f(\tilde{\ell}(x))\\
&=& \mathcal{\tilde{M}}_n.
\end{eqnarray*}\label{inegaliteMn}
Thus, $\mathcal{\tilde{M}}_n$ is a non-negative super-martingale
which converges almost surely towards some random variable
$\mathcal{\tilde{M}}_\infty$ with
\begin{equation*} \E_{j_0}[\mathcal{\tilde{M}}_\infty]\le
\E_{j_0}[\mathcal{\tilde{M}}_0]=f(j_0).
\end{equation*}
Let $\tilde{N}(n)$ denote the number of particles of $\tilde{L}$
located at site $j_0$ at time $n$. Since $j_0$ is an absorbing state
for the branching Markov chain $\tilde{L}$, the sequence
$\tilde{N}(n)$ is non-decreasing and thus converges almost surely to
some random variable $\tilde{N}(\infty)$. Moreover, we have
$\tilde{N}(n) f(j_0)\le \mathcal{\tilde{M}}_n$ so that
$\tilde{N}(\infty) f(j_0)\le \mathcal{\tilde{M}}_\infty$. This shows
that $\tilde{N}_\infty$ is almost surely finite and
\begin{equation*}
\E_{j_0}[\tilde{N}(\infty)]\le 1.
\end{equation*}
We can now complete the proof of the lemma. The random variable
$\tilde{N}(\infty)$ represents the total number of particles
reaching level $j_0$ for the branching Markov chain $\tilde{L}$
(where the particles returning at $j_0$ are frozen). Thus, the total
number of particles reaching $j_0$ for the original branching Markov
chain $L$, starting from one particle located at $\ell(o)=j_0$, has
the same law as the total progeny of a Galton-Watson process $W =
(W_n)_{n\geq0}$ with $W_0= 1$ and with reproduction law
$\tilde{N}(\infty)$ (this corresponds to running the process
$\tilde{L}$, then unfreezing all the particles at $j_0$ and then
repeating this procedure). Thus, we get the following equality in
law for the total number of particles located at $j_0$ for the
original process $L$ starting from one particle located at $j_0$:
\begin{equation*}
\sharp\{x\in \T, \ell(x)=j_0 \} \overset{\hbox{\tiny{law}}}{=}
\sum_{n=0}^\infty W_n.
\end{equation*}
Since $\E_{j_0}[\tilde{N}(\infty)]\le 1$ and
$\P_{j_0}\{\tilde{N}(\infty)=1\}<1$, the Galton-Watson process $W$
dies out almost surely. This proves
\begin{equation*}
\sharp\{x\in \T, \ell(x)=j_0 \}<\infty \quad \mbox{$\P_{j_0}$-a.s.}
\end{equation*}
\end{proof}

\subsection{Proof of positive recurrence}

\begin{prop}\label{proptheo1B}
Assume that the cookie environment $\mathcal{C} = (p_1,\ldots,p_M\,;
q)$ is such that
$$
q<\frac{b}{b+1} \quad\hbox{and}\quad \lambda < \frac{1}{b}.
$$
Then, all the return times of the walk to the root of the tree have
finite expectation.
\end{prop}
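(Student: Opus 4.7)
The plan has three steps.

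First, reduce the bound on $\E[\tau_o^k]$ to a summability statement for the branching chain $L$. Every self-loop at the root is in particular a return to the root, so $\tau_o^k \leq \sigma_k$, where $\sigma_k$ is the time of the $k$-th self-loop defined in Section \ref{sectionL} with $k_0 = k$. Since the walk is recurrent (Proposition \ref{proptheo1A}), $\sigma_k < \infty$ almost surely. Between times $0$ and $\sigma_k$ the walk starts and ends at $o$, so each non-root edge $(\pere{x}, x)$ is crossed exactly $2\ell(x)$ times while the self-loop at $o$ contributes exactly $k$ steps, giving
\begin{equation*}
\sigma_k = k + 2 \sum_{x \neq o} \ell(x).
\end{equation*}
Combined with Lemma \ref{transL} (each $x \in \T_n$ has $\ell(x)$ distributed as $Z_n$ under $\P_k$), this yields
\begin{equation*}
\E[\tau_o^k] \leq \E[\sigma_k] = k + 2 \sum_{n \geq 1} b^n \E_k[Z_n],
\end{equation*}
so the proof reduces to showing $\sum_{n \geq 1} b^n \E_k[Z_n] < \infty$.

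Second, fix $\alpha \in (\lambda, 1/b)$ and seek a function $f : \N \to [0, \infty)$ with $f(j) \geq j$ for all $j$ and $Pf \leq \alpha f$. With such an $f$ in hand, the Markov property of $Z$ makes $(\alpha^{-n} f(Z_n))_n$ a non-negative supermartingale under $\P_k$, so $\E_k[Z_n] \leq \E_k[f(Z_n)] \leq \alpha^n f(k)$; since $b\alpha < 1$,
\begin{equation*}
\sum_{n \geq 1} b^n \E_k[Z_n] \leq \frac{f(k)}{1 - b\alpha} < \infty,
\end{equation*}
concluding the proof.

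The third and main step is the construction of $f$, which is where the difficulty lies. For each finite irreducible class $P_k$ with $k < K$, the Perron-Frobenius theorem furnishes a strictly positive right eigenvector $v^{(k)}$ with $P_k v^{(k)} = \lambda_k v^{(k)}$ and $\lambda_k \leq \lambda < \alpha$. For the infinite class $P_K$, the Vere-Jones characterization recalled at the start of this section provides a strictly positive right sub-invariant vector $v^{(K)}$ with $P_K v^{(K)} \leq \alpha v^{(K)}$. These class-wise vectors must be assembled into a single $f$ on $\N$ satisfying both the global inequality $Pf \leq \alpha f$ and the domination $f \geq \mathrm{id}$. Since from a state in class $k$ one can only transition to states in classes of index at most $k$, naive extension of each $v^{(k)}$ by zero outside its class fails to be globally sub-invariant: the action of $P$ on a high-class state activates the lower-class eigenvectors, and this leakage must be absorbed into the margin $1/b - \lambda > 0$ by rescaling the $v^{(k)}$'s with weights decreasing in $k$. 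For the identity-domination on the infinite class, the key observation is that testing $P_K$ against $j \mapsto j$ gives the asymptotic growth rate $q/(b(1-q)) =: c$ (the tail $(\xi_i)_{i > M}$ being i.i.d.\ geometric of mean $c$), so Vere-Jones forces $\lambda_K \geq c$; the hypothesis $\lambda < 1/b$ therefore implies $c < 1/b$, meaning that the identity function is itself nearly $\alpha$-sub-invariant for $P_K$ outside a finite initial segment, and this allows $v^{(K)}$ to be chosen with linear growth. The transient states between successive irreducible classes and the absorbing state $\{0\}$ are then filled in compatibly with both requirements, which is possible precisely because the hypothesis is the strict inequality $\lambda < 1/b$ rather than $\lambda \leq 1/b$ as in the recurrence statement.
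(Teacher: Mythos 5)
The reduction to $\sum_{n\geq 1} b^n \E_k[Z_n] < \infty$ in your first step is exactly the paper's, and the Lyapunov/supermartingale strategy in your second step would indeed deliver the conclusion if the function $f$ of your third step existed as claimed. The paper proceeds very differently at that point: instead of building a sub-invariant function, it proves the tail estimate $\limsup_n \P_i\{Z_n>0\}^{1/n} \leq \lambda$ directly from the class structure (the finite classes via the definition of the spectral radius, the infinite class via Lemma \ref{lemmlk}) and then combines this with the uniform moment bound $\sup_n \E_i[Z_n^\beta] < \infty$ of Lemma \ref{couplage}(b) through H\"older's inequality, choosing the exponent $\alpha$ close to $1$ so that $b\,\tilde\lambda^{1/\alpha} < 1$. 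So the two routes are genuinely distinct.

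However, the third step of your argument contains a real error. You assert that ``Vere-Jones forces $\lambda_K \geq c$'' because testing $P_K$ against the identity gives asymptotic growth rate $c = q/(b(1-q))$. This is precisely the fallacy the paper warns against just after Definition \ref{defSpec}: the Vere-Jones characterization works only for \emph{sub}-invariant vectors, and an approximate \emph{super}-invariance $(P_K\,\mathrm{id})(i) \gtrsim c\,\mathrm{id}(i)$ does not yield a lower bound on the spectral radius of an infinite matrix. The digging random walk $(0,\ldots,0\,;q)$ with $M$ zeros furnishes a concrete counterexample: the paper computes $\lambda_K = c^{M+1}$, which is strictly less than $c$ as soon as $M\geq 1$, since $c<1$. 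Consequently the deduction ``$\lambda < 1/b$ therefore implies $c < 1/b$'' fails too. Take $b=2$, $q = 0.6 < 2/3$: then $c = 0.75 > 1/2 = 1/b$, yet for $M=2$ one has $\lambda = c^3 \approx 0.42 < 1/b$. In this regime your candidate $v^{(K)}$ with linear growth is not $\alpha$-sub-invariant for any $\alpha < 1/b$, so the construction of $f$ collapses. The idea is salvageable: since $q < b/(b+1)$ guarantees $c < 1$, one can instead take $v^{(K)}(i) \asymp i^\gamma$ for $\gamma$ large enough that $c^\gamma < 1/b$, which both dominates the identity and is eventually $\alpha$-sub-invariant, but this is not what you wrote and would still leave the gluing of the class-wise vectors (including the transient states between classes) to be carried out in detail rather than merely sketched.
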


\begin{proof}  Let $\sigma_i$
denote the time of the $i^{\hbox{\tiny{th}}}$ crossing of the edge
joining the root of the tree to itself for the cookie random walk:
\begin{equation*}
\sigma_{i} \;\defeq\; \inf\Big\{ n> 0,\; \sum_{j=1}^{n}\un_{\{X_j=X
_{j-1}=o\}} = i\Big\}.
\end{equation*}
We prove that $\E[\sigma_i] <\infty$ for all $i$.
Recalling the construction of the branching Markov chain $L$ in
section \ref{secdefL} and the definition of $Z$, we have
\begin{equation*}
\E[\sigma_i] = i+2 \E_{i}\Big[\sum_{x\in\T\backslash\{o\}}\ell(x)\Big] = i+2
\sum_{n=1}^\infty b^n\E_{i}[Z_n].
\end{equation*}
Let us for the time being admit that
\begin{equation}\label{aprouvlam}
\limsup_{n\to\infty} \P_i\{Z_n >0\}^{1/n} \leq\lambda\quad\hbox{ for any $i$.}
\end{equation}
Then, using Hölder's inequality and (b) of Lemma \ref{couplage},
choosing $\alpha,\beta,\tilde{\lambda}$ such that $\tilde{\lambda}
> \lambda$, $b \tilde{\lambda}^{1/\alpha} < 1$ and $\frac{1}{\alpha} +
\frac{1}{\beta} = 1$, we get
\begin{equation*}
\sum_{n=1}^\infty b^n\E_i[Z_n] \leq \sum_{n=1}^\infty b^n\P_i\{Z_n
>0\}^{1/\alpha}\E_i[Z_n^\beta]^{1/\beta} \leq  C_\beta \sum_{n=1}^\infty (b
\tilde{\lambda}^{1/\alpha})^n < \infty.
\end{equation*}

It remains to prove (\ref{aprouvlam}). Recall that
$\{0\},[l_1,r_1],\ldots,[l_k,\infty)$ denote the irreducible classes
of $P$ and that $Z$ can only move from a class $[l_k,r_k]$
to another class $[l_{k'},r_{k'}]$ with $k' < k$. Thus, for
$i\in[l_k,r_k]$, we have
\begin{equation*}
\P_{i}\{Z_n \geq l_k\} = \P_{i}\{Z_n \in [l_k,r_k]\} = \sum_{j=l_k}^{r_k}\P_{i}\{Z_n =j\} =\sum_{j=l_k}^{r_k}p^{(n)}(i,j).
\end{equation*}
For $k<K$, the sum above is taken over a finite set. Recalling
the definition of $\lambda_k$, we get
\begin{equation*}
\lim_{n\to\infty}\P_{i}\{Z_n \geq l_k\}^{1/n} = \lambda_k\quad\hbox{for all $i\in[l_k,r_k]$.}
\end{equation*}
Using the Markov property of $Z$, we conclude by induction that, for
any $i < l_K$,
\begin{equation}\label{posrecu1}
\limsup_{n\to\infty}\P_{i}\{Z_n >0\}^{1/n} \leq \max(\lambda_1,\ldots,\lambda_{K-1}) \leq \lambda.
\end{equation}
It remains to prove the result for $i\geq l_K$. In view of
(\ref{posrecu1}) and using the Markov property of $Z$, it is
sufficient to show that, for $i\geq l_K$,
\begin{equation}\label{posrecu2}
\limsup_{n\to\infty}\P_{i}\{Z_n \geq l_K\}^{1/n}  \leq \lambda_K.
\end{equation}
Let us fix $i\geq l_K$. We write
\begin{equation*}
\P_{i}\{Z_n\ge l_K\}= \P_{i}\{\exists m\ge n,\;
Z_m=l_K\}+\sum_{j=l_K}^\infty \P_i\{Z_n=j\}\P_j\{\nexists  m\ge
0,Z_m=l_K\}.
\end{equation*}
According to lemma \ref{lemmlk}, there exists $c>0$ such that, for
all $j\geq l_K$, $\P_j\{\nexists m\ge 0,\; Z_m=l_K\}\le 1-c$.
Therefore, we deduce that
\begin{equation}\label{posrecu3}
\P_{i}\{Z_n\ge l_K\} \leq \frac{1}{c}\P_{i}\{\exists m\ge n,\;
Z_m=l_K\} \leq \frac{1}{c}\sum_{m=n}^{\infty}p^{(m)}(i,l_K).
\end{equation}
Moreover, we have
$\lim_{m\to\infty}(p^{(m)}(i,l_K))^{1/m} = \lambda_K < 1$ hence
\begin{equation}\label{posrecu4}
\lim_{n\to\infty}\left(\sum_{m=n}^{\infty}p^{(m)}(i,l_K)\right)^{1/n} = \lambda_K.
\end{equation}
The combination of (\ref{posrecu3}) and (\ref{posrecu4}) yields
(\ref{posrecu2}) which completes the proof of the proposition.
\end{proof}

\subsection{Proof of transience when $\mathbf{\lambda > 1/b}$}\label{sectiontrans}

\begin{prop}\label{proptheo1C}
Assume that the cookie environment $\mathcal{C} = (p_1,\ldots,p_M\,;
q)$ is such that
$$
\lambda > \frac{1}{b}.
$$
Then, the cookie random walk is transient.
\end{prop}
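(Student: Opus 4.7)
The plan is to apply Corollary \ref{coroequiv}: it suffices to show that $L$ under $\P_{M+1}$ survives with positive probability. The strategy is to extract from $L$ a finite multi-type Galton--Watson process that is supercritical thanks to the hypothesis $\lambda>1/b$, then argue that the initial particle, or one of its descendants, feeds into this supercritical sub-process with positive probability.

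By Definition \ref{deflambda}, fix $k^* \in \{1,\ldots,K\}$ with $\lambda_{k^*} > 1/b$. I would pick a finite interval $J$ as follows. If $k^* < K$, take $J = [l_{k^*},r_{k^*}]$, which is already finite; if $k^* = K$, invoke the characterization of the spectral radius of an infinite irreducible non-negative matrix as the supremum of the spectral radii of its finite principal sub-matrices (\cite{Verejones67,Seneta73}) and pick $J = [l_K,N]$ with $N$ large enough so that $\lambda((p(i,j))_{i,j \in J}) > 1/b$ and $N \geq M+1$. Define the restricted process $\bar L$ to be the process obtained from $L$ by killing every particle whose position leaves $J$. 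By the branching property of $L$ (Lemma \ref{transL}(b)), $\bar L$ is a multi-type Galton--Watson process on the finite type space $J$ with mean offspring matrix $b\tilde P$, where $\tilde P = (p(i,j))_{i,j \in J}$. By construction the Perron eigenvalue of $b\tilde P$ exceeds $1$. Because $\tilde P$ is irreducible (Lemma \ref{lemmirred}) and has strictly positive diagonal (every $i \in [l_{k^*},r_{k^*}]$ belongs to $I$ by Definition \ref{pk}), it is primitive; non-singularity of the offspring law is immediate. Theorem 7.1, Chapter II of \cite{Harris63} then implies that $\bar L$ survives with positive probability starting from any type in $J$.

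To finish, I need to check that with positive probability some particle $x$ of $L$ eventually satisfies $\ell(x) \in J$, for then the Markov property of Lemma \ref{transL} identifies the sub-tree of descendants of $x$ with an independent copy of $L$ starting at $\ell(x) \in J$, to which the $\bar L$ survival argument applies. When $k^* = K$ the initial particle $\ell(o)=M+1$ already lies in $J$ (since $l_K\leq M+1\leq N$), so nothing is required. When $k^* < K$, I follow a single lineage of $L$, which by Lemma \ref{transL} yields the tagged process $Z$ of Section \ref{sectionZ} starting from $M+1$; iterating Lemma \ref{lemmmono}(a) (within each class $p(i,i)>0$, hence $p(i,j)>0$ for all $j\leq i$, so from the current class $Z$ can move in one step to the right endpoint of any strictly lower class), $Z$ reaches $J$ in finitely many steps with positive probability. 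The main obstacle is the infinite-class case $k^* = K$, where Perron--Frobenius is not directly available; the resolution is the Vere-Jones truncation step, after which everything reduces to standard finite multi-type branching process theory combined with the descent structure of the irreducible classes established in Section \ref{sectionirredclasses}.
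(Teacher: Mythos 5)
Your proposal implements the same ``seed'' strategy as the paper: extract a finite irreducible sub-matrix of $P$ whose spectral radius exceeds $1/b$ (directly if a finite class works, otherwise by truncating the infinite class and invoking Theorem 6.8 of \cite{Seneta73} to push the truncated spectral radius above $1/b$), note that the restricted process is a positive regular, non-singular multi-type Galton--Watson process with Perron root $>1$, and apply Theorem 7.1, Chapter II of \cite{Harris63} to obtain positive survival probability. The only deviation is that you route through Corollary \ref{coroequiv}, which forces the extra (correct but superfluous) step of steering the tagged particle from $M+1$ into the seed interval $J$; the paper instead uses Proposition \ref{equivalence}, which only requires survival under $\P_i$ for \emph{some} $i$, so no such reduction is needed.
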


\begin{proof}
The proof uses the idea of "seed" as explained in \cite{Muller07-Preprint}: we
can find a restriction $\tilde{L}$ of $L$ to a finite interval
$[l,r]$ which already has a non zero probability of survival.

To this end, let us first note that we can always find a finite
irreducible sub-matrix $Q = (p(i,j))_{l\leq i,j\leq r}$ of $P$ with
spectral radius $\tilde{\lambda}$ strictly larger than $1/b$.
Indeed, by definition of $\lambda$, either
\begin{itemize}
\item There exists $k \leq K-1$ such that $\lambda_{k}>1/b$ in which case we set $l\defeq l_{k}$ and $r\defeq r_{k}$.
\item Otherwise $\lambda_K>1/b$. In this case, we choose $l = l_K$
and $r> l$. Lemma \ref{lemmirred} insures that the sub-matrix $Q
\defeq (p(i,j))_{l\leq i,j\leq r}$ is irreducible. Moreover, as $r$
goes to infinity, the spectral radius of $Q$ tends to $\lambda_K$
(\textit{c.f.} Theorem 6.8 of \cite{Seneta73}). Thus, we can choose
$r$ large enough such that the spectral radius $\tilde{\lambda}$ of $Q$ is strictly larger
than $1/b$.
\end{itemize}

We now consider the process $\tilde{L}$ obtained from $L$ by
removing all the particles $x$ whose position $\ell(x)$ is not in
$[l,r]$ (we also remove from the process all the descendants of
such a particle). The process $\tilde{L}$ obtained in this way is a
multi-type branching process with only finite number of types indexed
by $[l,r]$. It follows from the irreducibility of $Q$ that, with the
terminology of \cite{Harris63}, this process is positive regular. It
is also clearly non singular. Moreover, the matrix $\mathbf{M}$
defined in Definition $4.1$, Chapter II of \cite{Harris63}, is, in
our setting, equal to $b Q$ so that the critical parameter $\rho$ of
Theorem $7.1$, Chapter II of \cite{Harris63} is given by $\rho =
b\tilde{\lambda} >1$. Thus, Theorem $7.1$ states that there exists
$i\in[l,r]$ such that the process $\tilde{L}$ starting from one
particle located at position (\emph{i.e.} with type) $i$ has a non
zero probability of survival. \textit{A fortiori}, this implies that
$L$ also has a positive probability of survival. Thus the cookie
random walk in transient.
\end{proof}

\subsection{Proof transience when $\mathbf{q\geq b/(b+1)}$}

\begin{prop}\label{proptheo1D}
Assume that the cookie environment $\mathcal{C} = (p_1,\ldots,p_M\,;
q)$ is such that
$$
q\geq\frac{b}{b+1}.
$$
Then, the cookie random walk is transient.
\end{prop}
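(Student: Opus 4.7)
My plan is to combine monotonicity with a height-drift argument. Since the cookie environment $\mathcal{C}':=(0,\ldots,0;q)$ satisfies $\mathcal{C}'\leq\mathcal{C}=(p_1,\ldots,p_M;q)$ coordinate-wise, Corollary~\ref{coromono} reduces the problem to proving transience for the $M$-digging walk (which I still denote $X$) with parameter $q\geq b/(b+1)$.

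\textbf{Setup and cookie-step negligibility.} I proceed by contradiction: suppose $X$ is recurrent. Lemma~\ref{loi0-1} then gives that every vertex of $\T$ is visited infinitely often almost surely, so the visit count $V_x(n):=\#\{0\leq k\leq n:\,X_k=x\}$ tends to $\infty$ for each fixed $x$. Let $V^{\mathrm{nr}}_n:=\sum_{x\neq o}\min(M,V_x(n))$ denote the total number of cookie-eating steps performed at non-root vertices up to time $n$. Each summand $\min(M,V_x(n))/n$ tends to $0$ pointwise in $x$ (it is bounded by $M/n$), and using $\sum_x V_x(n)/n\leq 1+o(1)$ as a dominating sum, a dominated-convergence argument gives $V^{\mathrm{nr}}_n/n\to 0$ almost surely.

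\textbf{Positive asymptotic drift.} Let $h_n:=|X_n|$. The conditional drift $d_k:=\E[h_{k+1}-h_k\mid\mathcal{F}_k]$ equals $-1$ on a cookie step at a non-root vertex, $2q-1\geq (b-1)/(b+1)>0$ on a biased step there, $0$ on the (at most $M$) cookie steps at the root, and $q$ on a biased step at the root. Using the crude bound $\min(2q-1,q)=2q-1$ one gets
\[
 \sum_{k<n} d_k \;\geq\; (2q-1)\bigl(n-V^{\mathrm{nr}}_n\bigr) - V^{\mathrm{nr}}_n - O(1) \;=\; (2q-1)\,n - 2q\,V^{\mathrm{nr}}_n - O(1).
\]
Since $h_n-\sum_{k<n}d_k$ is a martingale with increments bounded by $2$, Azuma's inequality yields $h_n-\sum_{k<n}d_k = O(\sqrt{n\log n})$ almost surely. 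Combining this with the previous step forces $h_n/n\to 2q-1>0$ almost surely, which contradicts the recurrence of $X$ (which requires $h_n=0$ infinitely often).

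\textbf{Main obstacle.} The technical heart of this plan is the assertion that $V^{\mathrm{nr}}_n/n\to 0$ under the recurrence hypothesis. For a recurrent Markov chain on a countable state space this is a direct consequence of the ergodic theorem together with $\sum_x V_x(n)=n+1$. Because the cookie walk is not Markovian, one has to argue directly, for instance by splitting $V^{\mathrm{nr}}_n=\sum_{x:\,V_x(n)\leq K}V_x(n)+M\cdot\#\{x:\,V_x(n)>K\}$, letting $K$ and $n$ tend to infinity jointly, and exploiting only that every fixed vertex is visited arbitrarily often (by Lemma~\ref{loi0-1} under the recurrence assumption) and that total visits up to time $n$ equal $n+1$.
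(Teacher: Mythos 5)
Your strategy (reduce by monotonicity to the $M$-digging walk, then run a height-drift argument with Azuma's inequality) is genuinely different from the paper's, which reduces to the digging walk with $\tilde{q}$ slightly below $b/(b+1)$, computes the spectral radius of its cookie-environment matrix explicitly via a pair of conjugate eigenvectors, and invokes the transience criterion of Proposition~\ref{proptheo1C}. Unfortunately, your argument has a gap that does not look fixable, and it is in the step you yourself flag as the ``main obstacle.''

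The crucial unproved claim is that recurrence forces $V^{\mathrm{nr}}_n/n\to 0$ almost surely. This is not a consequence of recurrence, and in fact it is false. A quick way to see that something must be wrong: as written, your argument uses $q\geq b/(b+1)$ only through $2q-1>0$, i.e.\ only $q>1/2$. If the argument were correct it would therefore prove transience of the $M$-digging walk for every $q>1/2$, which contradicts Corollary~\ref{CorDigging}: for $b=2$, $q=3/5$, $M=2$ one has $q/(b(1-q))=3/4$ and $(3/4)^{3}<1/2$, so this digging walk is recurrent even though $q>1/2$.

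The precise failure is in the implication ``recurrent $\Rightarrow$ negligible cookie time.'' On the contrary, your own drift decomposition together with Azuma shows that recurrence forces $V^{\mathrm{nr}}_n$ to be of linear order: along the subsequence of times $n$ with $h_n=0$, the inequality
$$
O(\sqrt{n\log n})\;\geq\;h_n-O(\sqrt{n\log n})\;\geq\;\sum_{k<n}d_k-O(\sqrt{n\log n})\;\geq\;(2q-1)n-2q\,V^{\mathrm{nr}}_n-O(\sqrt{n\log n})
$$
gives $V^{\mathrm{nr}}_n\geq \frac{2q-1}{2q}n+o(n)$. So for a recurrent walk, the fraction of cookie steps is bounded below by a positive constant along those times, which is the opposite of what you need. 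Determining whether the asymptotic fraction of cookie steps is large enough to cancel the positive bias is exactly the content of the recurrence/transience dichotomy, and it cannot be read off from a mean-field drift bound: it depends on the full cookie environment through the spectral radius $\lambda(\mathcal{C})$. Finally, the suggested appeal to the ergodic theorem for Markov chains in the obstacle paragraph would only give the claim for \emph{positive} recurrent chains (via Scheff\'e's lemma and $\sum_x V_x(n)=n+1$), not for null recurrent ones; and the cookie walk is not Markov anyway.
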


\begin{rem}
Under the stronger assumption $q>b/(b+1)$, one can prove, using a
similar coupling argument as in the proof of Lemma \ref{couplage},
that the absorbtion time $T_0$ of $Z$ defined in (\ref{defT0}) is
infinite with strictly positive probability. This fact implies the
transience of the cookie random walk. However, when $q = b/(b+1)$,
the absorbtion time $T_0$ may, or may not, depending on the cookie
environment, be finite  almost surely. Yet, Proposition
\ref{proptheo1D} states that the walk is still transient in both
cases.
\end{rem}

\begin{proof}[Proof of Proposition \ref{proptheo1D}.]
In view of the monotonicity property of the walk w.r.t. the cookie
environment stated in Corollary \ref{coromono}, we just need to
prove that, for any $M$, we can find $\tilde{q} < b/(b+1)$ such that
the walk in the cookie environment
\begin{equation}\label{thecookdig}
\tilde{\mathcal{C}}=(\underbrace{0,\ldots,0}_{ \hbox{\tiny{$M$
times}}}\,;\tilde{q})
\end{equation}
is transient. It easily checked that the irreducible classes of the
matrix $\tilde{P}$ associated to a cookie environment of the form
(\ref{thecookdig}) are $\{0\}$ and $[M+1,\infty[$ (see, for
instance, Remark \ref{remirredclasses}). Moreover, for such a cookie
environment, the coefficients of $\tilde{P}$ have a particularly
simple form. Indeed, recalling Definition \ref{defxip}, a few line
of elementary calculus yields, for $i,j\ge M+1$,
\begin{equation}\label{eqpdig}
\tilde{p}(i,j)=\binom{j+i-M-1}{j}s^j(1-s)^{i-M}\quad \mbox{ where }
s\defeq\frac{\tilde{q}}{\tilde{q}+(1-\tilde{q})b}
\end{equation}
(this result is proved, in a more general setting, in Lemma
\ref{coefP}). Therefore, the polynomial vector $U
\defeq (i(i-1)\ldots(i-M))_{i\geq M+1}$ is a right eigenvector of
the irreducible sub-matrix $\tilde{P}_1 \defeq
(\tilde{p}(i,j))_{i,j\geq M+1}$ associated with the eigenvalue
\begin{equation*}
\tilde{\lambda} \defeq\left(\frac{s}{1-s}\right)^{M+1}.
\end{equation*}
\textit{i.e.} $\tilde{P}_1U = \tilde{\lambda} U$. Similarly, setting
$V\defeq\left((s/(1-s))^{i-1}\right)_{i\geq M+1}$, it also follows
from (\ref{eqpdig}) that $V$ is a left eigenvector of $\tilde{P}_1$
associated with the same eigenvalue $\tilde{\lambda}$ \textit{i.e.}
$\tra{V}\tilde{P}_1 = \tilde{\lambda} \tra{V}$. Moreover, the inner
product $\tra{V}U$ is finite. Thus, according to Criterion III
p$375$ of \cite{Verejones67}, the spectral radius of $\tilde{P}_1$
is equal to $\tilde{\lambda}$. Since $\tilde{\lambda}$ tends to $1$
as $\tilde{q}$ increases to $b/(b+1)$, we can find $\tilde{q} <
b/(b+1)$ such that $\tilde{\lambda}
> 1/b$. Proposition \ref{proptheo1C} insures that, for this choice
of $\tilde{q}$, the cookie random walk is transient.
\end{proof}

\section{Rate of growth of the walk.}

\subsection{Law of large numbers and central limit theorem}\label{sectionlgn}
We now prove Theorem \ref{TheoSpeed}. Thus, in rest of this section, we assume that $X$ is a transient cookie random walk in an environment $\mathcal{C} = (p_1,\ldots,p_M\,;q)$ such that
\begin{equation}\label{nozero}
p_i >0 \quad\hbox{for all $i\in\{1,\ldots,M\}$.}
\end{equation}
The proof is based on the classical decomposition of the walk using the regeneration structure provided by the existence of cut times for the walk. Recall that $\T^{x}$ denotes the sub-tree of $\T$ rooted at site $x$. We say that (random) time $C>0$ is a cut time for the cookie random walk $X$ if it is such that:
\begin{equation*}
\left\{
\begin{array}{ll}
X_i \notin \T^{X_{C}} &\hbox{for all $i<C$,}\\
X_i \in \T^{X_{C}} &\hbox{for all $i\geq C$.}
\end{array}
\right.
\end{equation*}
\emph{i.e.} $C$ is a time where the walk first enters new subtree of $\T$ and never exits it. Let now $(C_n)_{n\geq 1}$ denote the increasing enumeration of these cut times:
\begin{equation*}
\left\{
\begin{array}{l}
C_{1} \defeq \inf\{k> 0,\;\hbox{$k$ is a cut time}\},\\
C_{n+1} \defeq \inf\{k> C_n,\;\hbox{$k$ is a cut time}\},
\end{array}
\right.
\end{equation*}
with the convention that $\inf\{\emptyset\} = \infty$ and $C_{n+1}=\infty$ when $C_n = \infty$.

\begin{prop}\label{propseqcuttime} Suppose that the sequence of cut times $(C_n)_{n\geq 1}$ is well defined (\emph{i.e.} finite a.s.). Suppose further that $\E[C_1^2] < \infty$. Then, there exist deterministic $v,\sigma >0$ such that
 $$
\frac{|X_n|}{n}
\overset{\hbox{\tiny{a.s.}}}{\underset{n\to\infty}{\longrightarrow}}
v \quad \hbox{ and } \quad  \frac{|X_n| - nv}{\sqrt{n}}
\overset{\hbox{\tiny{law}}}{\underset{n\to\infty}{\longrightarrow}}
\mathcal{N}(0,\sigma^2).
$$
\end{prop}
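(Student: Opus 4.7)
The plan is to exploit the renewal structure provided by the cut times and reduce the LLN and CLT for $|X_n|$ to their classical counterparts for i.i.d.\ real sequences. The starting observation is that at each cut time $C_n$, the walk enters the subtree $\T^{X_{C_n}}$ (which is isomorphic as a rooted $b$-ary tree to $\T$ itself) and never leaves it, while no vertex of this subtree has been visited strictly before time $C_n$. Consequently every cookie in $\T^{X_{C_n}}$ is untouched except that a single cookie has been eaten at $X_{C_n}$ upon arrival. Combining this with the independence of the walk's evolution on disjoint subtrees (\textit{c.f.} Lemma~\ref{transL}~(b)), one shows that the increments
\begin{equation*}
(\tau_n, H_n) \defeq \big( C_{n+1}-C_n,\; |X_{C_{n+1}}|-|X_{C_n}| \big), \qquad n \geq 1,
\end{equation*}
form an i.i.d.\ sequence. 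Moreover $H_n \geq 1$: the defining condition $X_{C_n} \notin \T^{X_{C_{n+1}}}$ forces $X_{C_{n+1}}$ to be a strict descendant of $X_{C_n}$, hence $\E[H_1] \geq 1$.

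Once this i.i.d.\ structure is in hand, the moment assumption $\E[C_1^2] < \infty$ (transferred to the generic excursion) gives $\E[\tau_1^2] < \infty$, and $H_1 \leq \tau_1$ yields $\E[H_1^2] < \infty$. The strong LLN applied to $(\tau_n)$ and $(H_n)$ produces
\begin{equation*}
\frac{|X_{C_n}|}{C_n} \;\overset{\hbox{\tiny{a.s.}}}{\underset{n\to\infty}{\longrightarrow}}\; v \defeq \frac{\E[H_1]}{\E[\tau_1]} > 0.
\end{equation*}
Interpolating via $\big||X_k|-|X_{C_n}|\big| \leq k - C_n \leq \tau_n$ for $C_n \leq k < C_{n+1}$, together with $\tau_n/n \to 0$ a.s.\ (a consequence of $\E[\tau_1^2] < \infty$ through Borel--Cantelli), upgrades this into the almost sure convergence $|X_n|/n \to v$.

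For the CLT I would apply the classical two-dimensional i.i.d.\ CLT to the centred vectors $\big(H_k - \E[H_1],\; \tau_k - \E[\tau_1]\big)$, obtaining joint asymptotic normality of $\big(|X_{C_n}| - n\E[H_1],\; C_n - n\E[\tau_1]\big)/\sqrt{n}$. A standard Anscombe-type time change, using $N(t) \defeq \sup\{n:C_n \leq t\}$ and $N(t)/t \to 1/\E[\tau_1]$ a.s., then converts this into
\begin{equation*}
\frac{|X_n| - nv}{\sqrt{n}} \;\overset{\hbox{\tiny{law}}}{\underset{n\to\infty}{\longrightarrow}}\; \mathcal{N}(0,\sigma^2), \qquad \sigma^2 = \frac{\mathrm{Var}(H_1 - v\tau_1)}{\E[\tau_1]}.
\end{equation*}

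The main obstacle is the non-degeneracy $\sigma > 0$, i.e.\ showing that $H_1 - v\tau_1$ is not almost surely constant. The cleanest route uses the standing hypothesis $p_i > 0$ for every $i$ (together with $q \in (0,1)$) to exhibit two distinct generic excursions, each with positive probability, giving incompatible values of $H_1 - v\tau_1$: one in which the walk descends in a straight line into a single subtree without ever returning to $X_{C_n}$ (so $\tau_1 = H_1 = 1$), and one in which it first makes a short detour through other children of $X_{C_n}$ before committing (so $\tau_1 > H_1$). The positivity of the relevant transition probabilities makes both events realisable, and hence $\mathrm{Var}(H_1 - v\tau_1) > 0$.
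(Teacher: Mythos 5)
Your proposal follows essentially the same approach as the paper: the increments $(C_{n+1}-C_n,\,|X_{C_{n+1}}|-|X_{C_n}|)_{n\geq 1}$ are i.i.d., distributed as $(C_1,|X_{C_1}|)$ under the conditional measure $\P\{\cdot\,|\,A\}$ where $A$ is the positive-probability event that the walk never recrosses the root's self-loop (this is what transfers the second-moment hypothesis to the generic excursion), and the LLN and CLT then follow by the usual change of time. One small correction to your framing: at time $C_n$ no cookie has yet been eaten at $X_{C_n}$ --- the walk has only just arrived there for the first time, so the whole subtree $\T^{X_{C_n}}$ is entirely fresh, which is precisely what makes the shifted walk reproduce the original law; your explicit check that $\sigma>0$ is a sensible supplement, since the paper delegates the time-change details (including non-degeneracy) to the references it cites.
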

\begin{proof} Let us first note that the event $A \defeq \{X\hbox{ never crosses the edge from $o$ to $o$}\}$ has non zero probability since the walk is transient and no cookies have strength $0$ (in this case, the irreducible classes for the matrix $P$ are $\{0\}$ and $[1,\infty)$). Recalling that  the walk evolves independently on distinct subtrees, it is easily seen that the sequence $(C_{n+1}-C_n,|X_{C_{n+1}}|-|X_{C_n}|)_{n\geq 1}$ is i.i.d. and distributed as $(C_1,|X_{C_1}|)$ under the conditional measure $\P\{\cdot\hbox{|}A\}$ (\emph{c.f.} for instance \cite{BerardRamirez07,KosyginaZerner08} for details). Since $\P\{A\}>0$ and the walk $X$ is nearest neighbor, we get $\E[(C_{n+1}-C_n)^2] = \E[C_1^2\hbox{|}A]<\infty$ and $\E[(|X_{C_{n+1}}|-|X_{C_n}|)^2] = \E[|X_{C_1}|^2\hbox{ |}A]<\infty$. Thus, we have
\begin{equation*}
\frac{C_n}{n}\!\overset{\hbox{\tiny{a.s.}}}{\underset{n\to\infty}{\longrightarrow}}\!\E[C_1\hbox{|}A],\quad
\frac{|X_{C_n}|}{n}\!\overset{\hbox{\tiny{a.s.}}}{\underset{n\to\infty}{\longrightarrow}}\!\E[|X_{C_1}|\hbox{|}A],\quad
\frac{|X_{C_n}|\!-\!\E[|X_{C_1}|\hbox{|}A]n}{\sqrt{n}}\!\overset{\hbox{\tiny{law}}}{\underset{n\to\infty}{\longrightarrow}}\!\mathcal{N}(0,\E[ |X_{C_1}|^2\hbox{|}A]),
\end{equation*}
and the proposition follows from  a change of time, \emph{c.f.} \cite{BerardRamirez07,KosyginaZerner08} for details.
\end{proof}
Theorem \ref{TheoSpeed} will now follow from Proposition
\ref{propseqcuttime} once we have shown that the cut times of the
walk are well defined and have a finite second moment. We shall, in
fact, prove the stronger result:
\begin{prop}\label{PropSubSpeed} The cut times of the walk are well defined and, for all $\beta >0$,
$\E[C_1^\beta] < \infty$.
\end{prop}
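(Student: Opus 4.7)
The plan is to combine a positive ``escape probability'' $\alpha$ with a geometric renewal structure along successive heights of $\T$. I split the argument into three steps: producing $\alpha > 0$, using it to show cut times are almost surely finite, and upgrading this to finite moments.

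\emph{Step 1.} Set $A := \{X\text{ never crosses the edge from }o\text{ to itself}\}$, so that $A = \{\sigma_1 = \infty\}$. By the construction of Section \ref{sectionL}, $\P\{A\} = \P_1\{L\text{ survives}\}$. Under the assumption $p_i > 0$ for every $i$, any prescribed finite pattern in the sequence $(\xi_k)$ has positive probability, so $p(i,j) > 0$ for all $i, j \geq 1$ and the only non-trivial irreducible class of $P$ is $[1,\infty)$. Combining this with the ``seed'' argument used in the proof of Proposition \ref{proptheo1C} and the transience hypothesis yields that $L$ survives with positive probability under $\P_1$, hence $\alpha := \P\{A\} > 0$.

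\emph{Step 2.} Let $\tau_n := \inf\{k : |X_k| = n\}$ (finite almost surely by transience) and $E_n := \{X_k \in \T^{X_{\tau_n}} \text{ for all } k \geq \tau_n\}$. At time $\tau_n$ all cookies in $\T^{X_{\tau_n}}$ are intact, and the translation sending $X_{\tau_n}$ to $o$ identifies the process $(X_{\tau_n+k})_{k\geq 0}$, stopped at the first exit of $\T^{X_{\tau_n}}$, with a fresh cookie walk stopped at its first self-loop at $o$. Hence $\P\{E_n \mid \mathcal{F}_{\tau_n}\} = \alpha$. Iterating, and using that on $E_n^c$ the walk eventually returns below height $n$ while transience forces $\tau_{n+1} < \infty$, one obtains $\P\{E_1^c \cap \cdots \cap E_n^c\} = (1-\alpha)^n$, so $n^* := \inf\{n \geq 1 : E_n\text{ occurs}\}$ is almost surely finite. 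Since $\tau_{n^*}$ is then a cut time, $C_1 \leq \tau_{n^*} < \infty$ almost surely, which proves that cut times are well defined.

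\emph{Step 3 (main obstacle).} To upgrade to $\E[C_1^\beta] < \infty$, H\"older's inequality with $p > 1$ and conjugate $p'$ yields
\begin{equation*}
\E[C_1^\beta] \leq \E[\tau_{n^*}^\beta] \leq \sum_{n \geq 1} (1-\alpha)^{(n-1)/p'}\, \E[\tau_n^{\beta p}]^{1/p}.
\end{equation*}
It therefore suffices to bound $\E[\tau_n^{\beta p}]$ by $K^n$ for some constant $K$ satisfying $K^{1/p}(1-\alpha)^{1/p'} < 1$. To bound the moments of $\tau_n$, one decomposes it into the waiting times between consecutive first visits of heights and dominates each such waiting time by a functional of the local-time branching chain $L$, using the couplings of Section \ref{sectionZ} (when $q < b/(b+1)$) or the strong outward drift (when $q \geq b/(b+1)$) to guarantee all moments. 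The delicate part, which is the main obstacle of the proof, is to uniformise these moment bounds in $n$ so that the resulting exponential growth constant $K$ can be made sufficiently close to $1$; this is likely achieved by exploiting the independence of the excursions in disjoint fresh subtrees and a more refined decomposition than a crude sum over heights, tightly coupling the tails of the ``trap'' time with the geometric decay of $\P\{n^* > n\}$.
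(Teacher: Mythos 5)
Your Step 1 is sound, and your overall intuition (geometric structure from a positive escape probability, plus moment bounds on the ``trap'' times) matches the paper's strategy. However, Step 2 contains a genuine gap, and Step 3 is an unproved outline.

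\textbf{The gap in Step 2.} The claimed identity $\P\{E_1^c\cap\cdots\cap E_n^c\}=(1-\alpha)^n$ is not correct, because the events $E_n^c$ are not $\mathcal F_{\tau_{n+1}}$-measurable: $E_n^c$ asserts that the walk \emph{eventually} exits $\T^{X_{\tau_n}}$, which may happen long after $\tau_{n+1}$. As a result, the iterated conditioning does not go through. Concretely, on $E_1^c$ there are two cases: either the walk exits $\T^{X_{\tau_1}}$ before $\tau_2$, in which case a fresh-subtree argument does give $\P\{E_2^c\mid\cdot\}=1-\alpha$; or the walk reaches height $2$ first (so that $X_{\tau_2}$ is a descendant of $X_{\tau_1}$), in which case the later exit of $\T^{X_{\tau_1}}$ already forces an exit of $\T^{X_{\tau_2}}$, so that $E_1^c\cap\{\text{exit after }\tau_2\}\subset E_2^c$. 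A short computation then shows $\P\{E_1^c\cap E_2^c\}\ge(1-\alpha)^2$, with strict inequality in general, and an equality cannot hold. This is precisely why the paper does \emph{not} index the geometric structure by height: it replaces your $E_n$ by an interlaced sequence $S_0<D_0<S_1<D_1<\cdots$ of \emph{completed} excursions in fresh subtrees, where $D_k$ is the exit time of the $k$-th excursion and $S_{k+1}$ is the next entry into a fresh subtree \emph{after} $D_k$. Because $D_k<S_{k+1}$ by construction, the excursions are disjoint in time and the number $\chi$ of finite excursions is exactly geometric with parameter $r=\P_1\{L\text{ dies out}\}$. Your height-based events overlap in time and do not admit this sequential conditioning.

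\textbf{Step 3 is a plan, not a proof.} You correctly identify the H\"older split and the need to control the time to visit $n$ new vertices, but you defer the essential estimates. The paper proves two nontrivial lemmas that do exactly this work: a stretched-exponential tail $\P_1\{U>n\mid L\text{ dies out}\}\le c_1 e^{-n^{1/3}}$ for the total size $U$ of the branching chain $L$ conditioned to die out (Lemma \ref{LemmSpeed1}), and a tail bound $\P\{\gamma_n>n^\nu\}\le c_2 e^{-n}$ for the time $\gamma_n$ of the $n$-th fresh-vertex visit (Lemma \ref{LemmSpeed2}, proved by a combinatorial coupling with i.i.d.\ increasing paths using $\min_i p_i>0$). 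It then writes $S_k=\gamma_{V_0+\cdots+V_{k-1}+2}$, where $V_i$ is the number of new vertices visited during the $i$-th completed excursion (distributed as $U$ conditioned on extinction), and combines the geometric tail of $\chi$ with these two lemmas. Without proving those two lemmas -- or suitable substitutes -- the moment bound does not follow, and it is these lemmas, rather than a refinement of your height-based decomposition, that close the argument.
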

The proof of this result relies on  the following two lemmas whose proofs are provided after the proof of the proposition.
\begin{duge}\label{LemmSpeed1}
Recall the definition of the branching Markov chain $L$. Let $U$ denote the total number of particles not located at $0$ for the entire lifetime of the process \emph{i.e.}
\begin{equation*}
U \defeq \sharp\{x\in\T,\; \ell(x)>0\}.
\end{equation*}
There exists $c_1>0$ such that, for all $n$,
\begin{equation*}
\P_1\{U>n\textup{ | $L$ dies out}\} \leq c_1e^{-n^{1/3}}.
\end{equation*}
\end{duge}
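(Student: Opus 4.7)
The plan is to decompose $U$ by generation of the tree and combine tail estimates on the extinction time of $L$ with exponential moment estimates on the generation sizes. Write $W_n := \sharp\{x \in \T_n : \ell(x) > 0\}$ for the number of live particles at generation $n$ and $T := \inf\{n \ge 0 : W_n = 0\}$ for the extinction time, so that $U = \sum_{n \ge 0} W_n$ and the event $\{L \text{ dies out}\}$ coincides with $\{T < \infty\}$. The key engine is the classical principle that conditioning a branching process on its own extinction produces a subcritical process. Letting $h(i) := \P_i(L \text{ dies out})$, the function $h$ is everywhere strictly positive (because Lemma \ref{lemmmono} and the assumption that no cookie has strength $1$ guarantee that every position has strictly positive probability of having all $b$ children land at $0$ in one generation), so the Doob $h$-transform of $L$ is a well-defined branching Markov chain, and the Vere--Jones characterization (Definition \ref{defSpec}) applied to its mean matrix shows that the spectral radius is at most $1/b$, with strict inequality in the transient regime because $h(i) < 1$ on the infinite irreducible class.

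From this subcriticality I extract two exponential-type ingredients. The first is an exponential tail on the extinction time: there exist $\rho \in (0,1)$ and $C > 0$ with $\P_1(T > k,\, L \text{ dies out}) \le C \rho^k$. This follows from $\{T > k\} \subseteq \{W_k \ge 1\}$ combined with $\E_1[W_k \mathbf{1}_{\{L \text{ dies out}\}}] \le C \rho^k$, which in turn comes from iterating $\E[W_{n+1} \mid \mathcal{F}_n,\text{ ext}] \le \rho W_n$ along the generation filtration. The second ingredient controls the joint behavior of the generation-sizes: by exploiting the deterministic inequality $W_{n+1} \le b W_n$ together with the Markov property, one obtains a Chernoff-type bound of the form $\E_1[\exp(\theta \sum_{k < K} W_k) \mathbf{1}_{\{L \text{ dies out}\}}] \le (C')^K$ for some $\theta, C' > 0$.

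Plugging these ingredients into a splitting argument at scale $\kappa n^{1/3}$ yields the stretched-exponential bound. We write $\P_1(U > n,\, L \text{ dies out})$ as at most $\P_1(T > \kappa n^{1/3},\, L \text{ dies out}) + \P_1(\sum_{k < \kappa n^{1/3}} W_k > n,\, L \text{ dies out})$. The first term is bounded by $C\rho^{\kappa n^{1/3}}$, and choosing $\kappa$ large enough makes this at most $\tfrac{1}{2}e^{-n^{1/3}}$. For the second term, exponential Chebyshev together with the ingredient above gives $e^{-\theta n}(C')^{\kappa n^{1/3}} = \exp(-\theta n + \kappa n^{1/3} \log C')$, which is negligible compared to $e^{-n^{1/3}}$ as soon as $n$ is large. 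Dividing through by $\P_1(L \text{ dies out}) > 0$ (positive because the single starting particle has strictly positive probability of having all $b$ children at $0$) yields the announced bound after absorbing constants into $c_1$.

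The main obstacle will be rigorously justifying the subcriticality of the $h$-transformed process. The classical Athreya--Ney result for one-type Galton--Watson processes does not apply directly because our chain has infinitely many types (positions in $\N$) with correlated siblings. Implementing the $h$-transform requires first verifying that $h$ is harmonic for the appropriate generator, then showing that the spectral radius of the reweighted mean matrix is strictly below $1/b$, most likely by combining a Vere--Jones-type analysis (as in Section \ref{sectionirredclasses}) with the known irreducibility structure of $P$ and the subinvariance characterization used in the proof of Lemma \ref{lemmtheo1}; this is where the delicate technical work lies, but once it is in place the rest of the argument reduces to the routine Chernoff estimates described above.
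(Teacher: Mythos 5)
Your decomposition $U=\sum_{k<T}W_k$ with a split at scale $\kappa n^{1/3}$ is valid, and it is a genuinely different route from the paper's. The paper instead uses the elementary inequality $U\le H\Theta$ (maximal generation size times extinction time), bounds $\P_1\{H\ge\sqrt n,\Theta<\infty\}$ by the strong Markov property and monotonicity (on $\{H\ge\sqrt n\}$, at least $\sqrt n$ independent subtrees must all die out, giving $r^{\lfloor\sqrt n\rfloor}$), and bounds $\P_1\{H<\sqrt n,\Theta\ge\sqrt n\}$ by a periodic ``restart from a single particle at $1$'' combined with the supercritical growth bound $\liminf_i\P_1\{\sharp L_i\ge\alpha\rho^i\}>0$ --- all on the \emph{unconditioned} process, with no $h$-transform.

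The gap in your plan is that all three ``ingredients'' secretly rest on the subcriticality of the $h$-transformed chain, which is asserted rather than proved, and is not a formality here. The reweighted mean matrix is $\tilde{m}(i,j)=b\,h(j)\,\E_i\bigl[\un_{\{j_1=j\}}\prod_{l\ge2}h(j_l)\bigr]/h(i)$; because siblings are correlated one cannot reduce to the single-type identity $\tilde{m}=m\,h'(q)$, and you do not exhibit the positive sub-invariant vector that the Vere--Jones characterization requires to get spectral radius $<1$ for an infinite-type matrix. Worse, the contraction $\E[W_{n+1}\mid\mathcal{F}_n,\textup{ext}]\le\rho W_n$ that you propose to iterate needs a \emph{uniform row-sum} bound $\sup_i\sum_j\tilde{m}(i,j)\le\rho<1$, which is strictly stronger than a spectral-radius bound and does not follow from it for infinite-type chains. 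And the claimed Chernoff estimate $\E_1[\exp(\theta\sum_{k<K}W_k)\un_{\{\textup{ext}\}}]\le(C')^K$ cannot come from the deterministic inequality $W_{n+1}\le bW_n$ (which only yields $W_n\le b^n$, blowing up with $K$); it would require a separate Lyapunov/exponential-moment argument for the conditioned chain, i.e.\ precisely the technical work you defer. So as written the ``routine'' part is not routine: the paper's $U\le H\Theta$ approach is not merely an alternative but a way of bypassing the entire $h$-transform machinery whose infinite-type justification your proposal leaves open.
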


\begin{duge} \label{LemmSpeed2}
Let $(\gamma_n)_{n\geq 0}$ denote the increasing sequence of times where the walk visits a new site:
\begin{equation*}
\left\{
\begin{array}{l}
\gamma_0 \defeq 0\,\\
\gamma_{n+1} \defeq \inf\{k> \gamma_n,\; \hbox{$X_k \neq X_i$ for all
$i<k$}\}.
\end{array}
\right.
\end{equation*}
There exist $\nu,c_2>0$ such that, for  all $n$,
\begin{equation*}
\P\{\gamma_{n} > n^{\nu}\}\leq c_2 e^{-n}.
\end{equation*}
\end{duge}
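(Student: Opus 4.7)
The strategy is to reduce the problem to bounding the first hitting time $T_n := \inf\{k : |X_k| = n\}$ of height $n$: since the walk is nearest-neighbor on the tree, reaching height $n$ forces it to have visited the whole path from $o$ to $X_{T_n}$, so $\gamma_n \leq T_n$. It therefore suffices to prove $\P\{T_n > n^\nu\} \leq c_2 e^{-n}$ for some $\nu > 0$.

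Write $T_n = \sum_{h=1}^{n} \Delta_h$ with $\Delta_h := T_h - T_{h-1}$. At time $T_{h-1}$ the walk sits at a fresh vertex $x_{h-1} := X_{T_{h-1}}$ at height $h-1$, with a full pile of cookies. During the interval $[T_{h-1}, T_h]$ it alternates between visits to $x_{h-1}$ and excursions into $\T \setminus \T^{x_{h-1}}$, the interval ending when the walk, from $x_{h-1}$, finally moves to a child of $x_{h-1}$. At each visit to $x_{h-1}$ the probability of moving up is bounded below by $p_* := \min(p_1, \ldots, p_M, q)/b > 0$ (here the assumption $p_i > 0$ from the statement of Theorem \ref{TheoSpeed} is essential). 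Consequently the number of visits to $x_{h-1}$ during the interval is stochastically dominated by a geometric random variable of parameter $p_*$.

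Each excursion into $\T \setminus \T^{x_{h-1}}$ issued from and returning to $x_{h-1}$ can, by a rerooting argument, be identified with a returning excursion of a cookie random walk from a single particle, and hence its size has a sub-exponential tail via Lemma \ref{LemmSpeed1}. Combining the geometric bound on the number of such excursions with the sub-exponential tail of each yields a uniform bound $\P\{\Delta_h > t\} \leq c \exp(-c' t^{\alpha})$ for some $\alpha > 0$ and constants independent of $h$. A standard concentration argument for sums of sub-exponential random variables then produces $\P\{T_n > n^\nu\} \leq c_2 e^{-n}$ provided $\nu$ is chosen large enough (one can take $\nu = 1 + 1/\alpha$).

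\textbf{Main obstacle.} The technical crux lies in bounding a single excursion of the walk in $\T \setminus \T^{x_{h-1}}$ returning to $x_{h-1}$. This is not an excursion of a cookie walk rooted at $x_{h-1}$ in the usual sense: the walk can travel all the way down to the true root $o$ and explore sibling subtrees. Adapting Lemma \ref{LemmSpeed1} requires exploiting the symmetry of the tree to reroot the excursion, and then using the monotonicity result of Proposition \ref{propmonoL2} to replace the partially consumed cookies that the walk may encounter in $\T \setminus \T^{x_{h-1}}$ (a consequence of the walk's earlier history) by a fresh cookie environment. Handling this rerooting rigorously, and with it the induced dependencies between successive $\Delta_h$, is the delicate part of the proof.
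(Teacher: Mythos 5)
Your proposal takes a genuinely different route from the paper, and it has real gaps that I don't think are merely ``delicate details to be filled in.''

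The paper's proof is a direct combinatorial argument: it encodes the trajectory as a sequence $(u_k)$ taking values in $\{0,\ldots,b\}$ (father or one of $b$ children), observes that distinct increasing sub-paths of length $k$ correspond to distinct vertices, and then lower-bounds the number of such sub-paths by coupling $(u_k)$ with an i.i.d. sequence, using that at every step (regardless of cookie state) each child is chosen with probability at least $\eta=\min_i p_i/b>0$. This bounds $\mathcal{V}_n$, the number of distinct vertices visited by time $n$, from below, and converts to a bound on $\gamma_n$. Crucially, it never requires the walk to reach any particular height, and it requires no excursion decomposition whatsoever.

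Your approach instead aims to bound $T_n:=\inf\{k:|X_k|=n\}$, which dominates $\gamma_n$, by decomposing $T_n=\sum_{h=1}^n\Delta_h$. The first concrete problem is the claim that $[T_{h-1},T_h]$ consists of visits to $x_{h-1}$ alternating with excursions into $\T\setminus\T^{x_{h-1}}$ returning to $x_{h-1}$, ending when the walk steps from $x_{h-1}$ to one of its children. This is false: $\T\setminus\T^{x_{h-1}}$ contains vertices of height $h$ (children of the siblings of $x_{h-1}$), so the walk can reach height $h$ for the first time during such an excursion without ever returning to $x_{h-1}$. The geometric bound on the number of visits to $x_{h-1}$ is therefore not the right quantity to control, and the decomposition as described does not exist.

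The second problem is the proposed use of Lemma \ref{LemmSpeed1}. That lemma bounds $U=\sharp\{x:\ell(x)>0\}$, the number of distinct edges crossed, conditionally on $L$ dying out; it does not bound $\sum_x\ell(x)$, which is what controls the time of an excursion. The time can be much larger than the number of visited vertices. (The paper's own proof of Proposition \ref{PropSubSpeed} is careful on this point: it uses Lemma \ref{LemmSpeed1} for the vertex count $V_k$ and Lemma \ref{LemmSpeed2} separately to convert vertex counts to time, via $\gamma$.) Finally, the rerooting$+$monotonicity plan for handling travel through $\T\setminus\T^{x_{h-1}}$ is sketched but not carried out, and the obstacles look structural rather than technical: the rerooted graph at $x_{h-1}$ is not a $b$-ary tree (the old root $o$ has a self-loop and a different degree in the complement), and the direction of the monotonicity argument is not clear, since the bias $q$ that pushes the walk towards the leaves in the original orientation pushes it back towards $x_{h-1}$ in the rerooted one. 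Taken together, I think the proposal needs to be replaced by something closer in spirit to the paper's sub-path counting argument, which cleanly sidesteps all of these issues.
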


\begin{proof}[Proof of Proposition \ref{PropSubSpeed}.]

We need to introduce some notation. We define two interlaced sequences $(S_i)_{i\geq 0}$ and $(D_i)_{i\geq 0}$ by
\begin{equation*}
\left\{
\begin{array}{l}
S_{0} = \gamma_1,\\
D_{0} = \inf\{n > S_{0},\; X_{n} = \pere{X_{S_0}}=o\},
\end{array}
\right.
\end{equation*}
and by induction, for $k\geq 1$,
\begin{equation*}
\left\{
\begin{array}{l}
S_{k} = \inf\{\gamma_n,\; \gamma_n > D_{k-1}\},\\
D_{k} = \inf\{n > S_{k},\; X_{n} = \pere{X_{S_k}}\}.
\end{array}
\right.
\end{equation*}
with the convention that, if $D_k=\infty$, then $D_j,S_j = \infty$
for all $j\geq k$. Let us set
\begin{equation*}
\chi \defeq \inf\{k\ge 0, D_k =\infty\}.
\end{equation*}
Since the walk visits infinitely many distinct vertices, we have
$S_k < \infty$ whenever $D_{k-1}<\infty$ so that these two
interlaced sequences have the form
\begin{equation*}
S_0 < D_0 < S_1 < D_1 < \ldots < S_\chi < D_{\chi} =\infty.
\end{equation*}
The interval $[S_k,D_k)$ represents the times where the walk
performs an excursion away from the set of vertices it has already
visited before time $S_k$. With these notations, the first  cut time is given by
\begin{equation*}
C_1 = S_{\chi}.
\end{equation*}
For $n,m$ such that $X_m \in \T^{X_n}$, we use the slight abuse of notation $X_m - X_n$ to denote the position of $X_m$ shifted by $X_n$ \emph{i.e.} the position of $X_m$ with respect to the subtree $\T^{X_n}$.
Using the Markov property for the stopping times $S_k,D_k$ and noticing that the walk evolves on distinct subtrees on the time intervals $[S_k,D_k)$, it follows that (compare with Lemma $3$ of \cite{BerardRamirez07} for details):
\begin{itemize}
\item[\textup{(a)}]
Conditionally on $D_k <\infty$ (\emph{i.e.} $\chi>k$), the sequences
$((X_{S_j+i}-X_{S_j})_{0\leq i< D_j-S_j},\; j\leq k)$ are i.i.d.
and distributed as $(X_i)_{i< D}$ under the conditional measure
$\P\{\cdot\hbox{|}D < \infty\}$ with $D=\inf\{k\ge 1, X_{k-1}=X_{k}=o\}$.
\item[\textup{(b)}]
Conditionally on $D_k <\infty$, the random variable
$D_{k+1}-S_{k+1}$ has the same distribution as $D_0-S_0$. In particular,
$\P\{D_{k+1}<\infty\hbox{ | }D_k<\infty\} = \P\{D_0 < \infty\}$.
Thus, $\chi$ has a geometric distribution with parameter $r \defeq
\P\{D_0 < \infty\} =\P_1\{L\hbox{ dies out}\} >0$:
\begin{equation*}
\P\{\chi = k\} = (1-r)r^{k}\quad\hbox{for $k\geq 0$.}
\end{equation*}
\end{itemize}
Fact (b) implies, in particular, that the first cut time $C_1 = S_{\chi}$ (and thus all cut times) is finite almost surely. It remains to bound the moments of $C_1$. We write
\begin{eqnarray*}
\P\{S_{\chi} > n^\nu\} &=& \P\{S_{\chi} > n^\nu\hbox{ and } \chi > \alpha\ln
n\} + \P\{S_{\chi}> n^\nu \hbox{ and } \chi < \alpha\ln n\}\\
&\leq& (1-r)r^{\alpha\ln n} + \P\{S_{\chi}> n^\nu \hbox{ and } \chi < \alpha\ln
n\}
\end{eqnarray*}
where $\alpha>0$ and where $\nu$ is the constant of Lemma \ref{LemmSpeed2}. Let $\beta>0$ be fixed, we can choose $\alpha$ large enough so that
\begin{equation}\label{e1}
\P\{S_{\chi}
> n^\nu\} \leq \frac{1}{n^{(\beta+1)\nu}} + \P\{S_{\chi}> n^\nu \hbox{ and }
\chi < \alpha\ln n\}.
\end{equation}
It remains to find an upper bound for the second term. Let us first note that
\begin{equation}
\label{e2}\P\{S_{\chi}> n^\nu \hbox{ and } \chi < \alpha\ln n\} \leq \sum_{k=0}^{\alpha\ln n} \P\{S_{k}> n^\nu \hbox{ and }
\chi \geq k\}.
\end{equation}
We introduce the sequence $(V_k)_{k\geq 0}$ defined by
\begin{equation*}
V_k \defeq \hbox{number of distinct vertices visited by the walk during the excursion $[S_k,D_k)$,}
\end{equation*}
with the convention that $V_k=\infty$ when $D_k=\infty$. By definition of $S_k,D_k$, the total number of distinct vertices other than the root visited by the walk up to time $S_k$ is exactly the sum of the number of vertices visited in each excursion $[S_i,D_i)$ ($i<k$) which is $V_0+\ldots+V_{k-1}$. Thus, $S_k$ is the time where the walk visits its $(V_0+\ldots+V_{k-1}+2)^{\hbox{\tiny{th}}}$ new vertex. This yields the identity
\begin{equation*}
S_{k} = \gamma_{V_0+\ldots+V_{k-1}+2}
\end{equation*}
which holds for all $k$ with the convention  $\gamma_\infty =
\infty$. Thus, we can rewrite the r.h.s. of (\ref{e2}) as
\begin{equation}\label{eq3}
\sum_{k=0}^{\alpha\ln n} \P\{S_{k}> n^\nu \hbox{ and }
\chi \geq k\} = \sum_{k=0}^{\alpha\ln n} \P\{\gamma_{V_0+\ldots+V_{k-1}+2} >
n^\nu \hbox{ and } V_1+\ldots+V_{k-1} < \infty\}.
\end{equation}
Each term on the r.h.s. of (\ref{eq3}) is bounded by
\begin{multline}\label{e4}
\P\{\gamma_{V_0+\ldots+V_{k-1}+2} >
n^\nu \hbox{ and } V_1+\ldots+V_{k-1} < \infty\}\\
\begin{aligned}
& = \;\P\{\gamma_{V_0+\ldots+V_{k-1}+2} > n^\nu
\hbox{ and } n < V_0+\ldots+V_{k-1} +2 < \infty\}\\
&\hspace{0.4cm}+ \P\{\gamma_{V_0+\ldots+V_{k-1}+2} > n^\nu \hbox{ and }
V_0+\ldots+V_{k-1} +2 \leq n\}\\
& \leq \;\P\{n-2<V_0+\ldots+V_{k-1}<\infty\} + \P\{\gamma_n>n^\nu\}\\
& \leq \;\P\{n-2<V_0+\ldots+V_{k-1}<\infty\} + c_2e^{-n}
\end{aligned}
\end{multline}
where we used Lemma \ref{LemmSpeed2} for the last inequality. Let us note that, according to Fact (a), conditionally on $\{V_0+\ldots V_{k-1} <\infty\} =\{\chi \geq k\}$, the random variables $(V_0,V_1,\ldots,V_{k-1})$ are i.i.d. and have the same law as the number of vertices visited by the walk before the time $D$ of its first jump from the root to the root  under the conditional measure $\P\{\cdot\hbox{ | }D < \infty\}$. Recalling the construction of the branching Markov chain $L$ described in Section \ref{sectionL}, we see that this distribution is exactly that of the random variable $U$ of Lemma \ref{LemmSpeed1} under the measure $\tilde{\P} \defeq \P_1\{\cdot\hbox{ | L dies out}\}$. Let now $(U_i)_{i\geq 0}$ denote a sequence of i.i.d. random variables with the same distribution as $U$ under $\tilde{\P}$. For $k \leq \alpha\ln n$, we get
\begin{eqnarray}
\nonumber\P\{n-2<V_0+\ldots+V_{k-1}<\infty\} &\leq&
\P\{V_0+\ldots+V_{k-1}
> n-2 \hbox{ | } V_0+\ldots+V_{k-1} < \infty\}\\
\nonumber&=& \tilde{\P}\{U_0+\ldots+ U_{k-1} > n-2\}\\
\nonumber&\leq & (\alpha\ln n) \tilde{\P}\left\{ U > \frac{n-2}{\alpha\ln n}\right\}\\
\label{e5}&\leq & c_1(\alpha \ln n)
\exp\left(-\left(\frac{n-2}{\alpha\ln n}\right)^{\frac{1}{3}}\right)
\end{eqnarray}
where we used Lemma \ref{LemmSpeed1} for the last inequality. Combining (\ref{e1})-(\ref{e5}), we conclude that
\begin{equation*}
\P\{S_{\chi}
> n^\nu\} \leq  \frac{1}{n^{(\beta+1)\nu}} + c_2(\alpha\ln n)  e^{-n} + c_1(\alpha\ln n)^2 \exp\left(-\left(\frac{n-2}{\alpha\ln
n}\right)^{\frac{1}{3}}\right) \leq \frac{2}{n^{(\beta+1)\nu}}
\end{equation*}
for all $n$ large enough. This yields $\E[S_{\chi}^\beta] < \infty$.
\end{proof}

We now provide the proof of the lemmas.
\begin{proof}[Proof of Lemma \ref{LemmSpeed1}.]
Let $\sharp L_n$ denote the number of particles not located at $0$ at time $n$:
\begin{equation*}
\sharp L_n \defeq\sharp\{x\in\T_n,\; \ell(x) >0\}.
\end{equation*}
Let also  $\Theta$ stand for the lifetime of $L$:
\begin{equation*}
\Theta \defeq \inf\{n,\; \sharp L_n=0\}
\end{equation*}
with the convention $\Theta=\infty$ when $L$ does not die out.  Since
no cookie has strength $0$, the irreducible classes of $P$ are $\{0\}$ and $[1,\infty)$. Thus, the transience of the walk implies
$\P_{1}\{\Theta<\infty\} \in (0,1)$.
Let $H$ denote the maximal number of particles alive at the
same time for the process $L$:
\begin{equation*}
H \defeq \sup_n \sharp L_n.
\end{equation*}
It follows from  the inequality $U \leq H \Theta$ that
\begin{equation}\label{e6}
\P_1\{U\geq n, \Theta<\infty\} \leq  \P_{1}\{H\geq \sqrt{n},\Theta<\infty \} + \P_{1}\{H < \sqrt{n},\Theta\geq\sqrt{n} \}.
\end{equation}
The first term on the r.h.s of (\ref{e6}) is easy to bound.
Recalling the monotonicity property of Proposition \ref{propmonoL1},
we have $\P_{j}\{\Theta<\infty\} \leq \P_{1}\{\Theta<\infty\}$ for
any $j\geq 1$. Therefore, using the Markov property of $L$ with the
stopping time $\zeta \defeq \inf\{k,\; \sharp\{x\in\T_k,\;
\ell(x)>0\} \geq \sqrt{n}\}$, we get, with obvious notation,
\begin{eqnarray}
\nonumber\P_{1}\{H\geq \sqrt{n},\Theta<\infty \} & = & \E_1[\un_{\{\zeta < \infty\}}\P_{L_\zeta}\{\Theta<\infty\}]\\
\nonumber&\leq& \P_{\textup{$\lfloor\sqrt{n}\rfloor$ particles loc. at $1$}}\{\Theta<\infty\}\\
\nonumber&=& \P_{1}\{\Theta<\infty\}^{\lfloor\sqrt{n}\rfloor }\\
\label{majH}&\leq& e^{-n^{1/3}}
\end{eqnarray}
where the last inequality hold for $n$ large enough. We now compute
an upper bound for the second term on the r.h.s. of (\ref{e6}).
Given $k<\sqrt{n}$, it follows again from Proposition
\ref{propmonoL1} that,
\begin{equation}\label{e7}
\P_{1}\{H < \sqrt{n},\Theta\geq\sqrt{n} \} \leq \P_{1}\{\sharp L_k <
\sqrt{n}\}^{\lfloor n/(k+1) \rfloor}.
\end{equation}
(this bound is obtained by considering the process where all the
particles at time $(k+1),2(k+1),\ldots$ are replaced by a single
particle located at $1$). Let us for the time being admit that there
exist $\rho>1$ and $\alpha>0$ such that,
\begin{equation}\label{e8}
a\defeq \liminf_{i\to\infty} \P_1\{\sharp L_i\ge \alpha\rho^i\} >0.
\end{equation}
Then, choosing $k = \ent{\ln n / \ln \rho}$, the combination of (\ref{e7}) and (\ref{e8}) yields, for all $n$ large enough,
\begin{equation}\label{e9}
\P_1\{H<\sqrt{n}, \Theta\geq \sqrt{n}\} \leq \P_1\{\sharp L_k<
\alpha\rho^k\}^{\ent{n/(k+1)}}\leq (1-a)^{\ent{n/(k+1)}} \leq
e^{-n^{1/3}}.
\end{equation}
Putting (\ref{e6}),(\ref{majH}) and (\ref{e9}) together, we conclude that,
\begin{equation*}
\P_1\{U\geq n \;|\; L\hbox{ dies out}\} =\frac{\P_1\{U\geq n, \Theta<\infty\}}{\P_1\{\Theta < \infty\}}  \leq c_1 e^{-n^{1/3}}
\end{equation*}
which is the claim of the Lemma. It remains to prove (\ref{e8}).
Recall that $q$ represents the bias of the walk when all the cookie
have been eaten. We consider separately the two cases $q<b/(b+1)$
and $q\geq b/(b+1)$.
\medskip

\noindent\textbf{(a) $\mathbf{q<b/(b+1)}$}. Since the walk is
transient, the spectral radius of the irreducible class $[1,\infty)$
of the matrix $P$ is necessarily strictly larger than $1/b$
(otherwise the walk would be recurrent according to Proposition
\ref{proptheo1A}). Using exactly the same arguments as in the proof
of Proposition \ref{proptheo1C}, we can find $r$ large enough such
that the finite sub-matrix $(p_{i,j})_{1\leq i,j\leq r}$ is
irreducible with spectral radius $\tilde{\lambda}$ strictly larger
than $1/b$. We consider again the process $\tilde{L}$ obtained from
$L$ by removing all the particles $x$ (along with their progeny)
whose position $\ell(x)$ is not in $[1,r]$. As already noticed in
the proof of Proposition \ref{proptheo1C}, the process $\tilde{L}$
is a positive regular, non singular, multi-type branching process
with a finite number of types and with parameter $\rho =
b\tilde{\lambda} >1$. Therefore, Theorem 1 p192 of
\cite{AthreyaNey72} implies that, for $\alpha>0$ small enough,
\begin{equation*}
\lim_{i\to\infty} \P_1\{\sharp \tilde{L}_i\ge \alpha\rho^i\} >0
\end{equation*}
which, in turn, implies (\ref{e8}).
\medskip

\noindent\textbf{(b) $\mathbf{q\geq b/(b+1)}$}. The spectral radius
$\lambda$ of the irreducible class $[1,\infty)$ may, in this case,
be strictly smaller that $1/b$ (see the remark below the statement
of Theorem \ref{MainTheo}). However, as shown during the proof of
Proposition \ref{proptheo1D}, we can always find $\hat{q} < b/(b+1)
< q$ such that the walk in the cookie environment
$(0,\ldots,0\,;\hat{q})$ is transient. Therefore, the walk in the
cookie environment $\hat{\mathcal{C}} = (p_1,\ldots,p_M\,;\hat{q})
\leq \mathcal{C}$ is also transient. Denoting by $\hat{L}$ the
branching Markov chain associated with $\hat{\mathcal{C}}$, it
follows from the previous case (a) combined with Proposition
\ref{propmonoL2} that, for some $\rho>1$, $\alpha>0$,
\begin{equation*}
\liminf_{i\to\infty} \P_1\{\sharp L_i\ge \alpha\rho^i\} \geq
\liminf_{i\to\infty} \P_1\{\sharp \hat{L}_i\ge \alpha\rho^i\} >0.
\end{equation*}
\end{proof}

\begin{proof}[Proof of Lemma \ref{LemmSpeed2}.]
Recall that, given $x\in \T$ and $i\in\{0,\ldots,b\}$,  we denote by
$\fils{x}{i}$ the $i^{\hbox{\tiny{th}}}$ child of $x$  (with the
convention $\fils{x}{0} = \pere{x}$). We call (un-rooted) path of
length $k$ an element $[v_1,\ldots,v_k]\in\{0,\ldots,b\}^k$. Such a
path is said to be increasing if $v_i \neq 0$ for all $1\leq i \leq
k$. Given $x\in\T$, we use the notation $x[v_1,\ldots,v_k]$ to
denote the endpoint of the path rooted at site $x$ \emph{i.e.}
\begin{equation*}
x[\emptyset] \defeq x \quad \hbox{and}\quad x[v_1,\ldots,v_k] \defeq {\overrightarrow{x[v_1,\ldots,v_{k-1}]}}^{\hspace{0.6pt}\scriptscriptstyle{v_k}}.
\end{equation*}
The proof of the lemma is based on the following observation: given
two increasing paths $v,w$ with same length $k$ such that
$[v_1,\ldots,v_k] \neq [w_1,\ldots,w_k]$, we have
$$
x[v_1,\ldots,v_k] \neq y[w_1,\ldots,w_k] \quad\hbox{for any $x,y\in\T$.}
$$
Let $(u_k)_{k\geq 1}$ denote the sequence of random variables taking values in $\{0,\ldots,b\}$ defined by $X_{n} = \fils{X}{u_{n}}_{n-1}$. With the previous notation, we have, for any $m\leq n$,
\begin{equation*}
X_{n} = X_m[u_{m+1}\ldots u_{n}].
\end{equation*}
It follows from the previous remark that, for any fixed $k\leq n$,
the number of distinct vertices visited by the walk $X$ up to time
$n$ is larger than the number of distinct increasing sub-paths of
length $k$ in the random path $[u_1,\ldots,u_n]$. We get a lower
bound for the number of such sub-paths using a coupling argument.
Recall that no cookie has strength $0$ and set $\eta = \min_i p_i/b
>0$. It is clear from the definition of the transition probabilities
of the cookie random walk $X$ that
\begin{equation*}
\left\{
\begin{array}{ll}
\P\{u_n = i \hbox{ | } u_1,\ldots u_{n}\} \geq \eta & \hbox{for
$i\in\{1,\ldots,b\}$,}\\
\P\{u_n = 0 \hbox{ | } u_1,\ldots u_{n}\} \leq 1-b\eta.
\end{array}
\right.
\end{equation*}
Therefore, we can construct on the same probability space a sequence
of i.i.d random variables $(\tilde{u}_n)_{n\geq 1}$ with
distribution:
\begin{equation*}
\left\{
\begin{array}{ll}
\P\{\tilde{u}_n = i\} = \eta & \hbox{for $i\in\{1,\ldots,b\}$}\\
\P\{\tilde{u}_n = 0\} = 1-b\eta,
\end{array}
\right.
\end{equation*}
in such way that
\begin{equation*}
\tilde{u}_n = i \neq 0 \quad\hbox{implies}\quad u_n = i.
\end{equation*}
With this construction, any increasing sub-path of
$[u_1,\ldots,u_n]$ is also an increasing sub-path of
$[\tilde{u}_1,\ldots,\tilde{u}_n]$. Moreover, since the sequence
$(\tilde{u}_n)_{n\geq 1}$ is i.i.d., we have, for any increasing
path $[v_1,\ldots,v_k]$,
\begin{multline}
\P\left\{[\tilde{u}_1,\ldots,\tilde{u}_n] \hbox{ does not contain the sub-path} [v_1,\ldots,v_k]\right\} \\
\leq \prod_{j=1}^{\lfloor n/k\rfloor}\P\left\{[\tilde{u}_{(j-1)k+1},\ldots,\tilde{u}_{jk}] \neq [v_1,\ldots,v_k]\right\} = (1-\eta^k)^{\lfloor n/k \rfloor}.
\end{multline}
We now choose $k \defeq \lfloor c\ln n\rfloor +1$ with $c \defeq \frac{1}{3\ln(1/\eta)}$ and set $\delta \defeq c\ln b$. Since there are $b^k > n^\delta$ increasing paths of length $k$, we get, for $n$ large enough,
\begin{multline*}
\P\left\{[\tilde{u}_1,\ldots,\tilde{u}_n]\hbox{ contains less than $n^\delta$ distinct increasing sub-paths of same length}\right\}\\
\begin{aligned}
& \leq \P\left\{[\tilde{u}_1,\ldots,\tilde{u}_n]\hbox{ does not contain all increasing sub-paths of length $k$}\}\right\} \\
 &\leq b^{k} (1-\eta^k)^{\lfloor n/k \rfloor}\\
& \leq e^{-\sqrt{n}}.
\end{aligned}
\end{multline*}
Thus, if $\mathcal{V}_n$ denotes the number of distinct vertices visited by the cookie random walk $X$ up to time $n$, we have proved the lower bound:
\begin{equation*}
\P\{\mathcal{V}_n \leq n^\delta\} \leq e^{-\sqrt{n}}.
\end{equation*}
Choosing $\nu > \max(1/\delta,2)$, we conclude that, for all $n$ large enough,
\begin{equation*}
\P\{\gamma_n \geq n^\nu\} = \P\{\mathcal{V}_{\lfloor n^\nu\rfloor} \leq n\} \leq  \P\{\mathcal{V}_{\lfloor n^\nu\rfloor} \leq \lfloor n^\nu\rfloor^\delta \} \leq e^{-\sqrt{\lfloor n^\nu\rfloor}} \leq e^{-n}.
\end{equation*}
\end{proof}

\subsection{Example of a transient walk with sub-linear growth}

In this section, we prove Proposition \ref{PropVit0} whose statement is repeated below.

\begin{prop}\label{RepPropSpeedZero}
Let $X$ be a $\mathcal{C} = (p_1,p_2,0,0\,; q)$ cookie random walk
with $q\geq b/(b+1)$ and $p_1,p_2 >0$ such that the largest positive
eigenvalue of the matrix
\begin{equation*}
P_1 \defeq
\begin{pmatrix}
    \frac{p_1}{b} +\frac{p_1 p_2}{b} -\frac{2 p_1 p_2}{b^2} & \frac{p_1 p_2}{b^2} \\
    \frac{p_1 + p_2}{b} -\frac{2 p_1 p_2}{b^2} & \frac{p_1 p_2}{b^2}  \\
\end{pmatrix}
\end{equation*}
is equal to $1/b$ (such a choice of $p_1,p_2$ exists for any $b\geq 2$).
Then, $X$ is transient (since $q\geq b/(b+1)$) yet
\begin{equation*}
\liminf_{n\to\infty} \frac{|X_n|}{n} = 0.
\end{equation*}
\end{prop}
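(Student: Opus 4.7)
\emph{Plan.} The proof uses the cut-time regeneration structure of Section~\ref{sectionlgn}. Since $q\geq b/(b+1)$, Theorem~\ref{MainTheo} ensures that $X$ is transient; a standard argument on trees then shows that the sequence of cut times $C_1<C_2<\ldots$ is a.s.\ well-defined. Setting $A=\{X\text{ never crosses the edge from }o\text{ to itself}\}$ and $\tilde\P=\P\{\,\cdot\,|\,A\}$, one checks as in Proposition~\ref{propseqcuttime} that $(C_{k+1}-C_k,|X_{C_{k+1}}|-|X_{C_k}|)_{k\geq 1}$ is i.i.d.\ with common law $(C_1,|X_{C_1}|)$ under $\tilde\P$. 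The plan is to establish
\begin{equation*}
\tilde\E\bigl[|X_{C_1}|\bigr]<\infty \qquad \text{and} \qquad \tilde\E[C_1]=+\infty.
\end{equation*}
Granting this, the strong law of large numbers yields $|X_{C_k}|/k\to\tilde\E[|X_{C_1}|]<\infty$ and $C_k/k\to+\infty$, so $|X_{C_k}|/C_k\to 0$ almost surely. Evaluating $|X_n|/n$ along the subsequence $n=C_k$ then gives $\liminf_{n\to\infty}|X_n|/n=0$.

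The bound $\tilde\E[|X_{C_1}|]<\infty$ is obtained by coupling $X$ with the classical biased walk of drift $q\geq b/(b+1)$ on $\T$: once every ancestor of the walk's current position has been visited at least $M+1=5$ times, $X$ evolves there exactly like this biased walk, whose first cut-point depth has exponentially decaying tail. The cookies only perturb the dynamics on a finite region along the walk's trajectory, so $|X_{C_1}|$ inherits finite moments of all orders, by an argument in the same spirit as Lemma~\ref{LemmSpeed1}.

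The crucial part is $\tilde\E[C_1]=+\infty$, where the hypothesis $\nu(p_1,p_2)=1/b$ comes in. By Remark~\ref{remirredclasses}, for the environment $(p_1,p_2,0,0\,;q)$ the irreducible classes of $P$ are $\{0\}$, $[1,2]$ and $[3,\infty)$, and $P_1$ is precisely the $2\times 2$ matrix appearing in Proposition~\ref{Prop4Cook}. The assumption $\nu(p_1,p_2)=1/b$ says that its spectral radius equals $1/b$, so the two-type Galton-Watson process $\hat L$ obtained by killing in $L$ every particle that leaves the class $[1,2]$ has mean matrix $bP_1$ with Perron eigenvalue exactly~$1$: it is critical, positive regular and non-singular, and therefore has infinite expected total progeny (Harris~\cite{Harris63}). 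Each particle of $\hat L$ corresponds to a vertex of $\T$ whose local time before $C_1$ lies in $\{1,2\}$, and each such vertex contributes at least one step to $C_1$, so stochastic domination gives $\tilde\E[C_1]\geq \E\bigl[\text{total progeny of }\hat L\bigr]=+\infty$. The main obstacle here is the rigorous link between the total progeny of $\hat L$ and the set of vertices of local time in $\{1,2\}$ visited by $X$ before $C_1$: the chain $L$ was constructed from the reflected walks $\tilde X^N$ and, in the transient regime, no longer coincides with the genuine local time of $X$; one circumvents this by working on the events $A_{n,N}$ of Section~\ref{section0-1}, on which $X$ and $\tilde X^N$ agree up to the reflecting time at height $N$, and letting $n,N\to\infty$ to transfer the critical heavy-tailed behaviour of $\hat L$ to the true walk.
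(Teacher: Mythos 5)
Your proof is built on a premise that fails for precisely this cookie environment. You condition on $A=\{X\text{ never crosses the edge from }o\text{ to itself}\}$ and rely on the cut-time regeneration of Section~\ref{sectionlgn}. But the standing assumption of that section is $p_i>0$ for all $i$ (see the beginning of Section 6.1 and the proof of Proposition~\ref{propseqcuttime}, which explicitly needs $\P\{A\}>0$). Here $p_3=p_4=0$, the irreducible class $[1,2]$ of $P$ is critical by hypothesis ($\nu(p_1,p_2)=1/b$), so $L$ under $\P_1$ is a critical two-type branching process which dies out almost surely; by Proposition~\ref{equivalence}, the walk crosses the $(o,o)$ edge at least once almost surely, i.e.\ $\P\{A\}=0$. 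Hence the conditional measure $\tilde\P=\P\{\cdot\,|\,A\}$ is ill-defined, and in fact cut times in the sense of Section~\ref{sectionlgn} do not a.s.\ exist: any candidate cut vertex $x$, once entered for the first time, leads the walk to cross $(x,\pere{x})$ almost surely, by the same argument shifted to $\T^x$. The paper's own proof uses the opposite fact — ``$D_0$ is almost surely finite'' — as its starting point.

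The salvageable intuition in your plan is that the critical two-type branching process has total progeny in the domain of attraction of the stable law of index $1/2$, giving the heavy tail $\P_1\{\Lambda>x\}\sim C/\sqrt{x}$. But the correct regeneration structure is the excursion decomposition $(S_k,D_k)$ with $S_0=0$ and $D_k$ the successive root-to-root crossing times: the excursion lengths $R_k=D_k-S_k$ are then i.i.d.\ and distributed as $2\Lambda-1$ under $\P_1$, one has the deterministic bounds $|X_{S_k}|\le 1+\sum_{i<k}R_i$ and $D_k\ge\sum_{i\le k}R_i$, so $|X_{D_k}|/D_k\le 1-R_k/\sum_{i\le k}R_i$, and the heavy tails of $R_k$ give $\limsup_k R_k/\sum_{i\le k}R_i=1$ a.s., hence $\liminf_n|X_n|/n=0$. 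This is both simpler and unconditional, and it is the argument the paper actually carries out. Your cut-time scheme, even if one tried to repair the conditioning, also leaves a genuine gap in the claimed inequality $\tilde\E[C_1]\ge\E[\text{total progeny of }\hat L]$: the link between particles of $\hat L$ and the time scale $C_1$ is never made precise, whereas the excursion decomposition ties the excursion length exactly to $2\Lambda-1$.
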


\begin{proof}
For this particular cookie environment, it is easily seen that the
cookie environment matrix $P$ has three irreducible classes
$\{0\},[1,2],[3,\infty)$ and takes the form
\begin{equation*}
P=\left(\begin{array}{clc}
\boxed{\hspace*{0.1cm}1_{\vphantom{i}}\hspace*{0.1cm}}  & &\\
& \hspace*{-0.36cm}\boxed{\hspace*{0.4cm}
\mbox{\Large{$\;{{P_1}_{\vphantom{\vdots}}}^{\vphantom{\vdots}}$}}\hspace*{0.4cm}}  & \mbox{\huge{$0$}}\\
& \mbox{\Huge{$*$}}& \hspace*{-0.36cm}\begin{array}{|c}\hline
\;\;\mbox{\Large{$\;{\mbox{\Huge{*}}}^{\vphantom{\vdots}}$}}\\
 \mbox{\small{(infinite class)}}\mbox{ }\\\end{array}
\end{array}
\right).
\end{equation*}
where $P_1$ is the matric given in the proposition. By hypothesis,
the spectral radius of the irreducible class $[1,2]$ is $1/b$,
therefore, the branching Markov chain $L$ starting from $\ell(o) \in
[1,2]$ dies out almost surely (the restriction of $L$ to $[1,2]$ is
simply a critical 2-type branching process where each particle gives
birth to, at most, 2 children). In particular, the quantity
$$
\Lambda \defeq \sum_{x\in \T}\ell(x)
$$
is $\P_i$ almost surely finite for $i\in\{1,2\}$. Moreover, one can
exactly compute the generating functions $\E_{i}[s^\Lambda]$ for
$i\in\{1,2\}$ using the recursion relation given by the branching
structure of $L$. After a few lines of elementary (but tedious)
calculus and using a classical Tauberian theorem, we get the
following estimate on the tail distribution of $\Lambda$:
\begin{equation*}
\P_1\{\Lambda>x\}\sim \frac{C}{\sqrt{x}},
\end{equation*}
for some constant $C>0$ depending on $p_1,p_2$ (alternatively, one can invoke Theorem 1 of \cite{Chi04} for the total progeny of a general critical multi-type branching process combined with the characterization of the domain of attraction to a stable law).

As in the previous section, let $(\gamma_n)_{n\geq 0}$ denote the
increasing sequence of times where the walk visits a new site (as
defined in Lemma \ref{LemmSpeed2}) and define two interlaced
sequences $(S_i)_{i\geq 0}$ and $(D_i)_{i\geq 0}$ in a similar way
as in the proof of Proposition \ref{PropSubSpeed} (only the
initialization changes):
\begin{equation*}
\left\{
\begin{array}{l}
S_{0} = 0\\
D_{0} = \inf\{n > 0,\; X_{n} = X_{n-1}=o\},
\end{array}
\right.
\end{equation*}
and by induction, for $k\geq 1$,
\begin{equation*}
\left\{
\begin{array}{l}
S_{k} = \inf\{\gamma_n,\; \gamma_n > D_{k-1}\},\\
D_{k} = \inf\{n > S_{k},\; X_{n} = \pere{X_{S_k}}\}.
\end{array}
\right.
\end{equation*}
Since, $L$ starting from $\ell(0)=1$ dies out almost surely, the
walk crosses the edge from the root to the root at least once almost
surely. Therefore, $D_0$ is almost surely finite. Using the
independence of the cookie random walk on distinct subtrees, it
follows that the random variables sequences $(S_i)_{i\geq 0}$,
$(D_i)_{i\geq 0}$ are all finite almost surely. Moreover, recalling
the construction of $L$, it also follows that the sequence of
excursion lengths $(R_i)_{i\geq 0}\defeq (D_{i}-S_{i})_{i\geq 0}$ is
a sequence of i.i.d. random variables, distributed as the random
variable $2\Lambda -1$ under $\P_1$. We also have the trivial facts:
\begin{itemize}
\item For all $k> 0$, $|X_{D_k}|= |X_{S_k}|-1$.
\item The walk only visits new vertices of the tree during the time
intervals $([S_k,D_k])_{k\ge 0}$. Thus, the number of vertices
visited by the walk at time $S_k$ is smaller than
$1+\sum_{i=0}^{k-1}R_i$. In particular, we have $|X_{S_k}|\le
1+\sum_{i=0}^{k-1}R_i$.
\item For all $k \ge 0$, $D_k\ge \sum_{i=0}^{k}R_i$.
\end{itemize}
Combining these three points, we deduce that, for $k\ge 1$,
\begin{equation*}
\frac{|X_{D_k}|}{D_k} \le \frac{\sum_{i=0}^{k-1}R_i}{\sum_{i=0}^{k}R_i}=1-\frac{R_k}{\sum_{i=0}^{k}R_i}.
\end{equation*}
Since, $(R_i)_{i\ge 0}$ is a sequence of i.i.d random variables in the domain of normal attraction of a positive stable distribution of index $1/2$, it is well known (and easily to check) that
\begin{equation*}
\limsup_{k\rightarrow \infty}\frac{R_k}{\sum_{i=0}^{k}R_i}=1 \quad a.s.
\end{equation*}
which, in turn, implies
\begin{equation*}
\liminf_{n\rightarrow \infty}\frac{|X_n|}{n}\le \liminf_{k\rightarrow \infty} \frac{|X_{D_k}|}{D_k} =0\quad  a.s.
\end{equation*}
\end{proof}

\section{Computation of the spectral radius}\label{sectioncalcul}

In this section, we prove Theorem \ref{TheoNcook} and Proposition
\ref{Prop4Cook} by computing the maximal spectral radius $\lambda$
of the cookie environment matrix $P$. Recall that the irreducible
classes of $P$ are $\{0\},[l_1,r_1],\ldots,[r_K,\infty)$ and that
$P_k$ denotes the restriction of $P$ to $[l_k,r_k]$ ($[l_k,\infty)$
for $k=K$). Denoting by $\lambda_k$ the spectral radius of $P_k$, we
have, by definition:
\begin{equation*}
\lambda = \max(\lambda_1,\ldots,\lambda_K).
\end{equation*}
Since the non negative matrices $P_1,\ldots,P_{K-1}$ are finite,
their spectral radii are equal to their largest eigenvalue. Finding
the spectral radius of the infinite matrix $P_K$ is more
complicated. We shall make use on the following result.

\begin{prop}\label{approxlambda}
Let $Q= (q(i,j))_{i,j\geq 1}$ be an infinite irreducible non
negative matrix. Suppose that there exists a non-negative left
eigenvector $Y = (y_i)_{i\geq 1}$ of $Q$ associated with some
eigenvalue $\nu >0$ \emph{i.e.}
\begin{equation}\label{approxlambda_s1}
\tra{Y}Q=\nu \tra{Y}.
\end{equation}
Assume further that, for all $\varepsilon >0$, there exists $N\geq
1$ such that the finite sub-matrix $Q_N = (q(i,j))_{1\leq i,j \leq
N}$ is irreducible and the sub-vector $Y_N = (y_i)_{1\leq i \leq N}$
is $\nu - \varepsilon$ super-invariant \emph{i.e}
\begin{equation}\label{approxlambda_s2}
\tra{Y_N}Q_N \geq (\nu - \varepsilon) \tra{Y_N}.
\end{equation}
Then, the spectral radius of $Q$ is equal to $\nu$.
\end{prop}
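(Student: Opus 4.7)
The plan is to establish $\lambda(Q) = \nu$ by sandwiching $\lambda(Q)$ between two matching bounds, upper and lower, each based on one of the two parts of the hypothesis.

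The upper bound $\lambda(Q) \le \nu$ is immediate from Criterion I of Vere-Jones quoted just before Proposition \ref{proptheo1A}: the strictly positive vector $Y$ satisfies $\tra{Y} Q = \nu\tra{Y}$, so in particular $\tra{Y} Q \le \nu \tra{Y}$, hence $Y$ is a left $\nu$-sub-invariant vector with positive entries, and by the characterization of $\lambda(Q)$ as the smallest such value one has $\lambda(Q) \le \nu$.

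For the matching lower bound, the plan is to approximate $Q$ by its finite truncations $Q_N$. Denote by $\lambda_N$ the spectral radius of $Q_N$. On one hand, the inequality $q_N^{(n)}(i,j) \le q^{(n)}(i,j)$ (valid for $1\le i,j\le N$ because the sum defining the left-hand side only keeps the $n$-paths staying in $\{1,\dots,N\}$), together with Vere-Jones' definition of spectral radius, yields $\lambda_N \le \lambda(Q)$; this is exactly Theorem 6.8 of Seneta already invoked in the proof of Proposition \ref{proptheo1C}. On the other hand, since $Q_N$ is a \emph{finite} irreducible non-negative matrix, its spectral radius coincides with its Perron-Frobenius eigenvalue, which admits the Collatz-Wielandt representation
\begin{equation*}
\lambda_N \;=\; \sup_{x > 0}\; \min_{1\le i\le N}\frac{(\tra{x} Q_N)_i}{x_i}.
\end{equation*}
Plugging in the strictly positive candidate $Y_N$ and using the super-invariance hypothesis $\tra{Y_N}Q_N \ge (\nu-\varepsilon)\tra{Y_N}$ gives at once $\lambda_N \ge \nu-\varepsilon$. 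Combining the two estimates yields $\nu-\varepsilon \le \lambda_N \le \lambda(Q) \le \nu$, and letting $\varepsilon \to 0$ closes the sandwich.

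The only point that requires genuine care is the invocation of Collatz-Wielandt, precisely because the analogous statement fails for infinite irreducible matrices: as emphasized just before the proposition, super-invariant vectors for a value $\lambda' > \lambda$ may well exist in infinite dimension, so one really needs the hypothesis to force a positive super-invariant vector on \emph{arbitrarily large finite} irreducible sub-matrices rather than on $Q$ itself. Once that finite-dimensional reduction is done, everything else is routine bookkeeping with the Seneta/Vere-Jones machinery already in use throughout the paper.
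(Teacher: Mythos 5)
Your proof is correct and follows essentially the same route as the paper: the upper bound $\lambda(Q)\le\nu$ via the Vere-Jones left sub-invariance characterization, the lower bound via the Collatz-Wielandt formula for the finite irreducible truncations $Q_N$ combined with Seneta's Theorem 6.8 to pass from $\lambda_N$ to $\lambda(Q)$, and then a sandwich as $\varepsilon\to0$. Your closing remark correctly identifies why the finite-dimensional detour is necessary, which is exactly the point the paper itself flags in the remark preceding the proposition.
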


\begin{rem}
By symmetry, the proposition above remains unchanged if one
considers a right eigenvector in place of a left eigenvector. Let us
also note that Proposition \ref{approxlambda} does not cover all
possible cases. Indeed, contrarily to the finite case, there exist
infinite non negative irreducible matrices for which there is no
eigenvector $Y$ satisfying Proposition \ref{approxlambda}.
\end{rem}

\begin{proof}
On the one hand, according to Criterion I of Corollary 4.1 of
\cite{Verejones67}, the spectral radius $\lambda_Q$ of $Q$ is the
smallest value for which there exists a non negative vector $Y\neq
\mathbf{0}$ such that
\begin{equation*}
\tra{Y}Q \leq \lambda_Q\tra{Y}.
\end{equation*}
Therefore, we deduce from (\ref{approxlambda_s1}) that
\begin{equation*}
\nu \geq \lambda_Q.
\end{equation*}
On the other hand, the matrix $Q_N$ is finite so that, according to
the Perron-Frobenius Theorem, its spectral radius is equal to its
largest eigenvalue $\lambda_{Q_N}$ and is given by the formula
\begin{equation*}
\lambda_{Q_N} = \sup_{(x_1,\ldots,x_N)}\min_{j}\frac{\sum_{i=1}^N
x_i q(i,j)}{x_j}
\end{equation*}
where the supremum is taken over all $N$-dimensional vectors with
strictly positive coefficients (\emph{c.f.} (1.1) p.4 of
\cite{Seneta73}). In view of (\ref{approxlambda_s2}), we deduce that
$\lambda_{Q_N} \geq \nu - \varepsilon$.

Furthermore, when $Q_N$ is irreducible, Theorem $6.8$ of
\cite{Seneta73} states that $\lambda_{Q_N} \leq \lambda_Q$. We
conclude that
\begin{equation*}
\lambda_Q\leq \nu \leq \lambda_Q + \varepsilon.
\end{equation*}
\end{proof}

\subsection{Preliminaries}
Recall the construction of the random variables $(\xi_i)_{i\ge 1}$
given in Definition \ref{defxip} and set
\begin{eqnarray*}
\mathcal{E}_{m,n}&\defeq& \big\{\mbox{in the finite sequence $(\xi_1,\xi_2,\ldots,\xi_M)$, there are at least $m$  terms equal to 0}\\
&&\hspace*{4.0cm}\mbox{and exactly $n$ terms are equal to 1 before the $m^{\hbox{\tiny{th}}}$ 0}\big\}\\
\mathcal{E}_{m,n}'&\defeq& \big\{\mbox{in the finite sequence $(\xi_1,\xi_2,\ldots,\xi_M)$, there are exactly $m$  terms equal to 0}\\
&&\hspace*{7.2cm}\mbox{and exactly $n$ terms equal to 1}\big\}.
\end{eqnarray*}
Let us note that, for $n+m>M$,
\begin{equation}\label{probEnul}
\P\{\mathcal{E}_{m,n}\}=\P\{\mathcal{E}_{m,n}'\}=0.
\end{equation}
In the rest of this section, we use the notation
\begin{equation*}
s\defeq\frac{q}{q+(1-q)b} =
\P\{\xi_{M+1}=1\;|\;\xi_{M+1}\in\{0,1\}\}.
\end{equation*}

\begin{duge}\label{coefP}
For $i,j\ge 1$, the coefficient $p(i,j)$ of the matrix $P$
associated with the cookie environment $\mathcal{C} =
(p_1,\ldots,p_M\,; q)$ is given by
\begin{equation*}
p(i,j)\;=\;\P\{\mathcal{E}_{i,j}\} \;+ \sumstack{0 \leq n \leq
j}{0\le m \le i-1
}\P\{\mathcal{E}'_{m,n}\}\binom{j+i-m-n-1}{j-n}s^{j-n}(1-s)^{i-m}.
\end{equation*}
\end{duge}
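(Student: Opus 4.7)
The plan is to decompose the event ``$j$ ones before the $i$-th failure in $(\xi_k)_{k\ge 1}$'' according to whether the $i$-th failure occurs within the initial cookie block $\{1,\ldots,M\}$ or strictly later. Since the random variables $\xi_1,\ldots,\xi_M$ (the cookies) and $(\xi_k)_{k\ge M+1}$ (the tail) are independent, and the tail is i.i.d., this partition will give exactly the two terms in the formula.

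First I would handle the ``early termination'' case: if the $i$-th zero appears among $\xi_1,\ldots,\xi_M$, then the number of ones before it is fully determined by the first $M$ entries. By definition, this contribution is precisely $\P\{\mathcal{E}_{i,j}\}$, which gives the first summand. Note that when $i > M$ this term is zero by \eqref{probEnul}, consistent with the event being empty.

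Next I would handle the ``late termination'' case, where strictly fewer than $i$ zeros occur in $\xi_1,\ldots,\xi_M$. I would condition on the exact configuration of zeros and ones in the initial block: say there are $m < i$ zeros and $n$ ones among $\xi_1,\ldots,\xi_M$, which occurs with probability $\P\{\mathcal{E}'_{m,n}\}$. By independence of the initial block and the tail, it then remains to compute the conditional probability, given this configuration, that the tail $(\xi_k)_{k\ge M+1}$ contributes exactly $j-n$ ones before its $(i-m)$-th zero. The key observation is that the entries of the tail taking values in $\{2,\ldots,b\}$ are irrelevant for counting zeros and ones; conditioning on $\xi_k\in\{0,1\}$, the probability of being a one equals $s=q/(q+b(1-q))$, and these reduced variables are i.i.d. Bernoulli$(s)$. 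The question thus reduces to computing the probability that a Bernoulli$(s)$ sequence has exactly $j-n$ successes before its $(i-m)$-th failure, which is a standard negative-binomial calculation giving
\begin{equation*}
\binom{j+i-m-n-1}{j-n}s^{j-n}(1-s)^{i-m}.
\end{equation*}
Multiplying by $\P\{\mathcal{E}'_{m,n}\}$ and summing over admissible pairs $(m,n)$ with $0\le m\le i-1$ and $0\le n\le j$ yields the second summand (again, $\P\{\mathcal{E}'_{m,n}\}=0$ when $m+n>M$ by \eqref{probEnul}, so the sum is automatically finite).

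The only mildly delicate step is the reduction to a Bernoulli sequence in the tail, and I would justify it carefully by explicitly writing $(\xi_k)_{k\ge M+1}$ as a marked sequence in which the $0/1$ marks form an i.i.d. Bernoulli$(s)$ sequence independent of the positions of the marks other than $\{0,1\}$. Beyond this, the argument is bookkeeping, and no new ideas beyond Definition \ref{defxip} are required.
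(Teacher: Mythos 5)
Your proposal is correct and follows essentially the same route as the paper: it decomposes the event on the exact number of zeros and ones in the initial block $(\xi_1,\dots,\xi_M)$, giving $\P\{\mathcal{E}_{i,j}\}$ when the $i$-th zero falls in the block, and otherwise conditions on $\mathcal{E}'_{m,n}$ and reduces the tail to a negative-binomial count for the filtered i.i.d.\ Bernoulli$(s)$ sequence. The paper packages this tail computation as $\P\{\mathcal{F}_{i-m,j-n}\}$ and evaluates it with exactly the same conditioning on $\xi_l\in\{0,1\}$ that you describe.
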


\begin{proof}
Recall that $p(i,j)$ is equal to the probability of having $j$ times
1 in the sequence $(\xi_l)_{l\ge 1}$ before the
$i^{\hbox{\tiny{th}}}$ 0. We decompose this event according to the
number of $0$'s and $1$'s in the subsequence $(\xi_l)_{l\le M}$. Let
$\mathcal{F}_{m,n}$ be the event
\begin{equation*}
\mathcal{F}_{m,n}\;\defeq\; \{\mbox{in the sub-sequence $(\xi_i)_{i>
M}$, $n$ terms equal to 1 before the $m^{\hbox{\tiny{th}}}$
failure}\}.
\end{equation*}
Thus we have
\begin{equation}\label{AL1}
p(i,j)=\P\{\mathcal{E}_{i,j}\}+\sumstack{0 \leq n \leq j}{0\le m \le
i-1 }\P\{\mathcal{E}'_{m,n}\}\P\{\mathcal{F}_{i-m,j-n}\}
\end{equation}
(the first term of the r.h.s. of the equation comes from the case
$m=i$ which cannot be included in the sum). Since the sequence
$(\xi_i)_{i> M}$ is a sequence of i.i.d. random variables, it is
easy to compute $\P\{\mathcal{F}_{m,n}\}$. Indeed, noticing that,
\begin{equation*}
\P\{\mathcal{F}_{m,n}\}=\P\{\mathcal{F}_{m,n}\;|\; \xi_l\in\{0,1\}
\mbox{ for all }l\in [M,M+n+m]\},
\end{equation*}
we get
\begin{equation}\label{AL2}
\P\{\mathcal{F}_{m,n}\}=\binom{n+m-1}{n}s^n(1-s)^m.
\end{equation}
The combination of (\ref{AL1}) and (\ref{AL2}) completes the proof of
the lemma.
\end{proof}
We can now compute the image $\tra{Y}P$ of the exponential vector $Y=
((s/(1-s))^{i-1})_{i\ge 1}$. Let us first  recall the notation
\begin{equation*}
\lambda_{\hbox{\tiny{sym}}} \defeq \frac{q}{b(1-q)}
\prod_{i=1}^M\left((1-p_i)\left(\frac{q}{b(1-q)}\right)
+\frac{(b-1)p_i}{b}+\frac{p_i}{b}\left(\frac{q}{b(1-q)}\right)^{-1}\right).
\end{equation*}
We use the convention that $\sum_{u}^{v} = 0$ when $u>v$.

\begin{duge}\label{ppuis} We have
\begin{equation*}
\sum_{i=1}^{\infty}p(i,j)\left(\frac{s}{1-s}\right)^{i-1}=\lambda_{\hbox{\tiny{sym}}}\left(\frac{s}{1-s}\right)^{j-1}+
A(j),
\end{equation*}
with
\begin{equation*}
A(j)\;\defeq\;\sum_{i=1}^{M-j}\P\{\mathcal{E}_{i,j}\}\left(\frac{s}{1-s}\right)^{i-1}-\sum_{n=j+1}^M
\sum_{m=0}^{M-n}\P\{\mathcal{E}'_{m,n}\}\left(\frac{s}{1-s}\right)^{j+m-n}.
\end{equation*}
In particular, $A(j) = 0$ for $j\ge M$.
\end{duge}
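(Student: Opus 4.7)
The plan is to substitute the explicit formula for $p(i,j)$ given by Lemma \ref{coefP} into the sum, swap the order of summation, and evaluate the resulting power series in closed form. Throughout, let $\rho\defeq s/(1-s) = q/(b(1-q))$.

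First, I would split the sum according to the two pieces in Lemma \ref{coefP}. The contribution of the $\P\{\mathcal{E}_{i,j}\}$ term is directly $\sum_{i=1}^{M-j}\P\{\mathcal{E}_{i,j}\}\rho^{i-1}$, since by (\ref{probEnul}) the probability vanishes for $i+j>M$. This already matches the first sum appearing in the definition of $A(j)$. For the remaining piece, exchanging the order of summation gives
\begin{equation*}
\sum_{n=0}^{j}\sum_{m=0}^{\infty}\P\{\mathcal{E}'_{m,n}\}\,s^{j-n}\sum_{i=m+1}^{\infty}\binom{j+i-m-n-1}{j-n}(1-s)^{i-m}\rho^{i-1}.
\end{equation*}
Writing $k=i-m$ and using $(1-s)^k\rho^{k+m-1}=s^{k+m-1}/(1-s)^{m-1}$, the inner sum becomes $\rho^{m-1}s^{j-n-1}\sum_{k=1}^{\infty}\binom{j-n+(k-1)}{k-1}s^k$, which by the negative binomial identity $\sum_{\ell\geq 0}\binom{r+\ell}{\ell}s^{\ell}=(1-s)^{-(r+1)}$ equals $\rho^{j+m-n}$. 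Thus the second piece reduces to the clean double sum
\begin{equation*}
S(j)\defeq\sum_{n=0}^{j}\sum_{m=0}^{M-n}\P\{\mathcal{E}'_{m,n}\}\rho^{j+m-n},
\end{equation*}
again using (\ref{probEnul}) to cut the $m$-range at $M-n$.

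The crucial observation is then that the \emph{full} double sum $\sum_{n=0}^{M}\sum_{m=0}^{M-n}\P\{\mathcal{E}'_{m,n}\}\rho^{j+m-n}$ equals $\lambda_{\text{sym}}\rho^{j-1}$. Indeed, defining $g\colon\{0,1,\dots,b\}\to\R$ by $g(0)=\rho$, $g(1)=\rho^{-1}$ and $g(k)=1$ for $k\geq 2$, one has by independence of the $\xi_i$'s,
\begin{equation*}
\E\!\left[\prod_{i=1}^{M}g(\xi_i)\right]=\prod_{i=1}^{M}\!\left((1-p_i)\rho+\tfrac{p_i}{b}\rho^{-1}+\tfrac{(b-1)p_i}{b}\right),
\end{equation*}
so $\rho\cdot\E[\prod g(\xi_i)] = \lambda_{\text{sym}}$ by Definition of $\lambda_{\text{sym}}$. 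On the other hand, grouping realisations by the number $m$ of zeros and $n$ of ones among $(\xi_1,\ldots,\xi_M)$ gives $\E[\prod g(\xi_i)]=\sum_{m+n\leq M}\P\{\mathcal{E}'_{m,n}\}\rho^{m-n}$, whence the claimed identity after multiplying by $\rho^{j-1}$.

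Subtracting $S(j)$ from this full sum yields precisely the discarded block $\sum_{n=j+1}^{M}\sum_{m=0}^{M-n}\P\{\mathcal{E}'_{m,n}\}\rho^{j+m-n}$, which is the second term of $A(j)$. Putting everything together gives the announced identity. Finally, when $j\geq M$, the first range $1\leq i\leq M-j$ and the second range $j+1\leq n\leq M$ are both empty, so $A(j)=0$. The only nontrivial step is the negative binomial summation, which is a routine generating function manipulation; I do not expect any real obstacle.
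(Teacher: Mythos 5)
Your proof is correct and follows essentially the same route as the paper: substitute Lemma \ref{coefP}, exchange the order of summation, evaluate the resulting negative binomial series to reduce the inner sum to $\left(\frac{s}{1-s}\right)^{j+m-n}$, and then recognize the full double sum $\sum_{n=0}^{M}\sum_{m=0}^{M-n}\P\{\mathcal{E}'_{m,n}\}\left(\frac{s}{1-s}\right)^{j+m-n}$ as $\lambda_{\hbox{\tiny{sym}}}\left(\frac{s}{1-s}\right)^{j-1}$ --- which the paper obtains by expanding the product defining $\lambda_{\hbox{\tiny{sym}}}$ and you phrase a bit more cleanly as $\frac{s}{1-s}\,\E\big[\prod_{i=1}^M g(\xi_i)\big]$. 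One small slip: the intermediate prefactor should read $\rho^{m-1}s^{j-n}$ rather than $\rho^{m-1}s^{j-n-1}$, though the final value $\rho^{j+m-n}$ that you state for the inner sum is nonetheless correct.
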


\begin{proof}
With the help of Lemma \ref{coefP}, and in view of (\ref{probEnul}),
we have
\begin{multline*}
\hspace*{-0.3cm}\sum_{i=1}^{\infty}p(i,j)\left(\frac{s}{1-s}\right)^{i-1}\\
\begin{aligned}
&\quad=\sum_{i=1}^\infty
\P\{\mathcal{E}_{i,j}\}\left(\frac{s}{1\!-\!s}\right)^{i-1}+\sum_{i=1}^{\infty}\sumstack{0
\leq n
\leq j}{0\le m \le i-1 }\!\!\!\P\{\mathcal{E}'_{m,n}\}\binom{j\!+\!i\!-\!m\!-\!n\!-\!1}{j-n}s^{j+i-n-1}(1\!-\!s)^{1-m}\\
&\quad= \sum_{i=1}^{M-j}
\P\{\mathcal{E}_{i,j}\}\left(\frac{s}{1\!-\!s}\right)^{i-1}+\sumstack{0
\leq n \leq j\wedge M}{0\le m \le M-n
}\!\!\!\P\{\mathcal{E}'_{m,n}\}\sum_{i=0}^{\infty}\binom{j+i-n}{i}s^{j+i+m-n}(1\!-\!s)^{1-m}.
\end{aligned}
\end{multline*}
Using the relation
\begin{equation*}
\sum_{i=0}^{\infty}\binom{j+i-n}{i}s^{j+i+m-n}(1-s)^{1-m}=\left(\frac{s}{1-s}\right)^{j+m-n},
\end{equation*}
we deduce that
\begin{eqnarray*}\sum_{i=1}^{\infty}p(i,j)\left(\frac{s}{1\!-\!s}\right)^{i-1}&=&
\sum_{i=1}^{M-j}\P\{\mathcal{E}_{i,j}\}\left(\frac{s}{1\!-\!s}\right)^{i-1}+\sum_{n=0}^{j\wedge M}\sum_{m=0}^{M-n}\P\{\mathcal{E}'_{m,n}\}\left(\frac{s}{1\!-\!s}\right)^{j+m-n}.\\
&=&\sum_{n=0}^{M}\sum_{m=0}^{M-n}\P\{\mathcal{E}'_{m,n}\}\left(\frac{s}{1\!-\!s}\right)^{j+m-n}+A(j).
\end{eqnarray*}
It simply remains to show that
\begin{equation}\label{cx}
\lambda_{\hbox{\tiny{sym}}}=\sum_{n=0}^{M}\sum_{m=0}^{M-n}\P\{\mathcal{E}'_{m,n}\}
\left(\frac{s}{1-s}\right)^{m-n+1}.
\end{equation}
Let us note that,
\begin{equation*}
\lambda_{\hbox{\tiny{sym}}}=\frac{s}{1-s}\prod_{l=1}^M\left(\frac{s}{1-s}\P\{\xi_l=0\}+\P\{\xi_l\ge
2\}+\P\{\xi_l=1\}\frac{1-s}{s}\right).
\end{equation*}
Expanding the r.h.s. of this equation and using the definition of
$\mathcal{E}'_{m,n}$, we get (\ref{cx}) which concludes the proof of the lemma.
\end{proof}

We have already noticed that $A(j)=0$ whenever $j\ge M$. In fact, if
some cookies have strength $0$, the lower bound on $j$ can be
improved. Let $M_0$ denote the number of cookies with strength $0$:
\begin{equation*}
M_0 \defeq \sharp\{1\leq i\leq M, p_i= 0\}.
\end{equation*}

\begin{duge} \label{Ajnul}
Let $\mathcal{C}=(p_1,\ldots,p_M\,;q)$ be a cookie environment with
$p_M\neq 0$. We have,
\begin{equation*}
A(j)=0\quad\hbox{for all $j\geq M-M_0$.}
\end{equation*}
\end{duge}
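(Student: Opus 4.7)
The plan is to show that each of the two sums appearing in the definition of $A(j)$ vanishes separately as soon as $j \geq M - M_0$. The core observation is that in the finite sequence $(\xi_1, \ldots, \xi_M)$ at most $M - M_0$ terms can equal $1$: at each of the $M_0$ positions $l$ with $p_l = 0$ one has $\P\{\xi_l = 0\} = 1$, so any occurrence of $\xi_l = 1$ must come from one of the $M - M_0$ positions where $p_l > 0$.

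The second sum is the easier of the two. Its summation index $n$ ranges over $\{j+1, \ldots, M\}$, and for $j \geq M - M_0$ every such $n$ strictly exceeds $M - M_0$. Since $\mathcal{E}'_{m,n}$ requires exactly $n$ ones in $(\xi_1,\ldots,\xi_M)$, the preceding observation forces $\P\{\mathcal{E}'_{m,n}\} = 0$ throughout the range of summation, so the sum is zero.

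For the first sum I would split into two cases. If $j > M - M_0$, then the event $\mathcal{E}_{i,j}$ already requires $j$ ones before the $i$-th zero, which exceeds the absolute cap of $M-M_0$ ones in the whole sequence, so $\P\{\mathcal{E}_{i,j}\} = 0$. The critical case---and the only place where the hypothesis $p_M \neq 0$ actually enters---is $j = M - M_0$. Realizing $\mathcal{E}_{i,M-M_0}$ would require \emph{all} $M - M_0$ potential ones to appear before the $i$-th zero, which in particular forces $\xi_l = 1$ at every position $l$ with $p_l > 0$. Since $p_M > 0$, this implies $\xi_M = 1$, so the $i$-th zero would have to lie at a position strictly greater than $M$---impossible for an event measurable with respect to $(\xi_1,\ldots,\xi_M)$. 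Hence $\P\{\mathcal{E}_{i,M-M_0}\} = 0$ for every $i \in \{1, \ldots, M_0\}$, and the first sum vanishes as well.

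The main (and essentially only) delicate point is this last step: the hypothesis $p_M \neq 0$ serves precisely to prevent the $i$-th zero from being legitimately placed at position $M$, which would otherwise allow $\mathcal{E}_{i,M-M_0}$ to have positive probability and thus leave the first sum in $A(j)$ nonzero at $j = M - M_0$.
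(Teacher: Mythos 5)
Your proof is correct and follows essentially the same route as the paper: bound the total number of ones in $(\xi_1,\ldots,\xi_M)$ by $M-M_0$, observe that this kills both sums when $j>M-M_0$ and the second sum when $j=M-M_0$, and then use $p_M\neq 0$ to rule out the boundary event $\mathcal{E}_{i,M-M_0}$. If anything, you spell out more explicitly than the paper why $p_M\neq 0$ forces $\P\{\mathcal{E}_{i,M-M_0}\}=0$ (the $i$-th zero would have to sit past position $M$), which the paper leaves somewhat implicit.
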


\begin{proof}
Since $M_0$ cookies have strength $0$, there are at most $M-M_0$
terms equal to $1$ in the sequence $(\xi_1,\ldots,\xi_M)$. Keeping
in mind the definitions of $\mathcal{E}_{m,n}$ and
$\mathcal{E}'_{m,n}$, we see that
\begin{equation*}
\P\{\mathcal{E}_{m,n}\}=\P\{\mathcal{E}'_{m,n}\}=0 \quad \mbox{ for
} n>M-M_0.
\end{equation*}
Moreover, recall that  $p_M\neq 0$. Thus, if exactly $M-M_0$ terms
are equal to $1$, the last one, $\xi_M$, must also be equal to $1$.
Therefore, we have
\begin{equation*}
\P\{\mathcal{E}_{m,M-M_0}\}=0.
\end{equation*}
Let us now fix $j\ge M-M_0$, and look at the expression of $A(j)$.
\begin{equation*}
A(j)\;\defeq\;\sum_{i=1}^{M-j}\P\{\mathcal{E}_{i,j}\}\left(\frac{s}{1-s}\right)^{i-1}-\sum_{n=j+1}^M
\sum_{m=0}^{M-n}\P\{\mathcal{E}'_{m,n}\}\left(\frac{s}{1-s}\right)^{j+m-n}.
\end{equation*}
The terms in the first sum $\sum_{i=1}^{M-j}$ are all zero since
$j\ge M-M_0$. Similarly, all the terms in the sum $\sum_{m=0}^{M-n}$
are also zero since $n\ge j+1>M-M_0$.
\end{proof}

\begin{prop}\label{vectorpropre}
Let  $\mathcal{C} = (p_1,\ldots,p_M\,; q)$ be a cookie environment
such that
\begin{equation*}
q < \frac{b}{b+1}\quad \hbox{ and }M_0 \geq
\left\lfloor\frac{M}{2}\right\rfloor.
\end{equation*}
If $M$ is an odd integer, assume further that $p_M\neq 0$. Then, the
spectral radius $\lambda_K$ of the infinite irreducible sub-matrix
$P_K = (p(i,j))_{i,j\geq l_K}$ is equal to
 $\lambda_{\hbox{\tiny{sym}}}$.
\end{prop}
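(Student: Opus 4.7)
The plan is to apply Proposition~\ref{approxlambda} to the infinite irreducible matrix $P_K$, taking as left eigenvector the restriction
\begin{equation*}
Y_K=\left(\left(\frac{s}{1-s}\right)^{i-1}\right)_{i\geq l_K},\qquad s=\frac{q}{q+b(1-q)},
\end{equation*}
of the exponential vector from Lemma~\ref{ppuis}. By Lemma~\ref{lemmmono}(a) combined with the defining identity $p(l_K-1,l_K)=0$ of Remark~\ref{remirredclasses}, one has $p(i,j)=0$ whenever $i<l_K$ and $j\geq l_K$; hence Lemma~\ref{ppuis} yields, for every $j\geq l_K$,
\begin{equation*}
(\tra{Y_K}P_K)_j=\sum_{i\geq l_K}y_ip(i,j)=\sum_{i\geq 1}y_ip(i,j)=\lambda_{\hbox{\tiny{sym}}}y_j+A(j).
\end{equation*}
So the two tasks that remain are (i) to show $A(j)=0$ for all $j\geq l_K$ and (ii) to verify the truncation hypothesis of Proposition~\ref{approxlambda}.

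The key step for (i) is the geometric estimate $l_K\geq M_0+1$. Writing $a_1<\cdots<a_{M_0}$ for the positions $k\leq M$ with $p_k=0$, a short counting argument shows that, for $i\leq M_0$, the $i$-th failure in $(\xi_1,\ldots,\xi_M)$ arrives by position $a_i$, so $p(i,j)>0$ forces $j\leq a_i-i$. If some $i\leq M_0$ could reach the infinite class in one step, then $a_i\geq M_0+i+1$, and by monotonicity of $(a_k)$ this forces $a_{M_0}\geq 2M_0+1$; but $a_{M_0}\leq M$ in general and $a_{M_0}\leq M-1$ when $p_M\neq 0$, contradicting $M_0\geq\lfloor M/2\rfloor$ in both parity cases (the extra hypothesis $p_M\neq 0$ serves precisely to close the odd case). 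Iteration then shows that no state $\leq M_0$ ever reaches the infinite class, so $l_K\geq M_0+1$. For $M$ odd this gives $l_K\geq(M+1)/2\geq M-M_0$ and Lemma~\ref{Ajnul} yields $A(j)=0$ on $\{j\geq l_K\}$. For $M$ even one even has $l_K\geq M/2+1\geq M-M_0+1$, and both sums defining $A(j)$ vanish identically on $\{j\geq M-M_0+1\}$ since each involves an event $\mathcal{E}_{i,j}$ or $\mathcal{E}'_{m,n}$ requiring strictly more than $M-M_0$ ones in $(\xi_1,\ldots,\xi_M)$, which is impossible as only $M-M_0$ positions are non-forced. Hence $\tra{Y_K}P_K=\lambda_{\hbox{\tiny{sym}}}\tra{Y_K}$.

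For (ii), the finite sub-matrices $Q_N=(p(i,j))_{l_K\leq i,j\leq N}$ are irreducible for $N$ large by Lemma~\ref{lemmirred}, and super-invariance of $Y_{K,N}$ reduces to
\begin{equation*}
\sup_{j\in[l_K,N]}\;\frac{1}{y_j}\sum_{i>N}y_ip(i,j)\longrightarrow 0\quad\text{as }N\to\infty.
\end{equation*}
This is the main technical obstacle. Using Lemma~\ref{coefP} together with the identity $(s/(1-s))^{i-j}s^{j-n}(1-s)^{i-m}=s^{i-n}(1-s)^{j-m}$, the quantity inside the supremum rewrites as a finite linear combination (over $m,n\leq M$) of negative-binomial tail probabilities of the form $\P\{\mathrm{NB}(j-n+1,1-s)>N-m-1\}$, weighted by the bounded constants $\P\{\mathcal{E}'_{m,n}\}(s/(1-s))^{m+1-n}$. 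The assumption $q<b/(b+1)$ is equivalent to $s/(1-s)<1$, so the mean $(j-n+1)\cdot s/(1-s)$ of each such negative-binomial law stays a fixed fraction below $j\leq N$, and for $N$ large enough the threshold $N-m-1$ exceeds this mean by an amount linear in $N$, uniformly in $j\in[l_K,N]$. A Chernoff estimate (or equivalently, writing $\mathrm{NB}$ as a sum of independent geometric variables and applying standard large-deviation bounds) then gives the required uniform exponential decay, and Proposition~\ref{approxlambda} concludes $\lambda_K=\lambda_{\hbox{\tiny{sym}}}$.
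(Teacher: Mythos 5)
Your proof is correct and follows the same strategy as the paper: exhibit the exponential vector $Y_K$ as a left $\lambda_{\hbox{\tiny{sym}}}$-eigenvector of $P_K$ by showing $A(j)=0$ on $\{j\geq l_K\}$, and then verify the truncation hypothesis of Proposition~\ref{approxlambda} through the negative-binomial tail bound coming from Lemma~\ref{coefP}. Two small points are worth noting. First, the paper derives only the weaker bound $l_K\geq\lfloor M/2\rfloor+1$ from the observation $p(i,j)=0$ for $i\leq\lfloor M/2\rfloor<j$, whereas you prove the sharper $l_K\geq M_0+1$ by a direct counting argument on the positions $a_1<\cdots<a_{M_0}$ of the zero-strength cookies; in exchange, the paper disposes of the case $M$ even and $p_M=0$ by the trick of appending the cookie $p_{M+1}=q$ and invoking Lemma~\ref{Ajnul}, whereas you give a direct proof that $A(j)$ vanishes on $\{j\geq M-M_0+1\}$. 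Second, you explicitly justify $p(i,j)=0$ for $i<l_K\leq j$ via Remark~\ref{remirredclasses} and Lemma~\ref{lemmmono}(a) before concluding that $\sum_{i\geq 1}y_ip(i,j)=\sum_{i\geq l_K}y_ip(i,j)$; this step is actually needed to pass from Lemma~\ref{ppuis} (which sums over all $i\geq 1$) to the eigenvector identity for $P_K$, and the paper leaves it implicit. Otherwise, replacing the paper's law-of-large-numbers argument by a Chernoff estimate for the negative-binomial tails is an equivalent way to obtain the required uniform smallness and yields the same conclusion.
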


\begin{proof}
Let us note that, when $M$ is an even integer and $p_M= 0$, we can
consider $\mathcal{C}$ as the $M+1$ cookie environment
$(p_1,\ldots,p_M,q\,; q)$ and this $M+1$ cookie environment still
possesses, at least, half of its cookies with zero strength because
$\ent{(M+1)/2} = \ent{M/2}$. Thus, we can assume, without loss of
generality that the cookie environment is such that $p_M\neq 0$. In
order to prove the proposition, we shall prove that
\begin{equation*}
Y \;\defeq\;\left(\left(\frac{s}{1-s}\right)^{i-1}\right)_{i\geq
l_K}.
\end{equation*}
is a left eigenvector of $P_K$ for the eigenvalue
$\lambda_{\hbox{\tiny{sym}}}$ fulfilling the assumptions of
Proposition \ref{approxlambda}. Since the cookie environment has
$M_0 \geq \ent{M/2}$ cookies with strength $0$, there are, in the
$2\ent{M/2}$ first cookies, at most $\ent{M/2}$ random variables
taking value $1$ in the sequence $(\xi_i)_{i\ge 1}$ before the
$\ent{M/2}^{\hbox{\textup{\tiny{th}}}}$ failure \emph{i.e.}
\begin{equation}\label{ez1}
p(i,j)=0 \quad \mbox{ for $i\le \ent{M/2}<j$}.
\end{equation}
This implies, in particular, that $l_K\ge \ent{M/2}+1 \geq M - M_0$.
Using Lemma \ref{Ajnul}, we deduce that
\begin{equation}\label{ez2}
A(j) = 0 \quad\hbox{for all $j\geq l_k$.}
\end{equation}
Combining (\ref{ez2}) and Lemma \ref{ppuis}, we
conclude that $Y$ is indeed a left eigenvector:
\begin{equation*}
\tra{Y}P_K=\lambda_{\hbox{\tiny{sym}}}\tra{Y}.
\end{equation*}
Let $\varepsilon>0$. We consider the sub-vector
\begin{equation*}
Y_N\;\defeq\;\left(\left(\frac{s}{1-s}\right)^{i-1}\right)_{l_K\le
i< l_K+N}.
\end{equation*}
It remains to show that, for $N$ large enough, $\tra{Y}_NP_{K,N}\ge
(\lambda_K-\varepsilon) \tra{Y}_N$ \emph{i.e.}
\begin{equation}\label{equan}
\sum_{i=l_K}^{l_K+N-1}p(i,j)\left(\frac{s}{1-s}\right)^{i-1}\ge
(\lambda_K-\varepsilon) \left(\frac{s}{1-s}\right)^{j-1} \quad\hbox{
for all $j\in\{l_K,\ldots,l_K+N-1\}$.}
\end{equation}
Keeping in mind that, for $j\ge l_K$
\begin{equation*}
\sum_{i=l_K}^{\infty}p(i,j)\left(\frac{s}{1-s}\right)^{i-1}=
\lambda_K\left(\frac{s}{1-s}\right)^{j-1},
\end{equation*}
we see that (\ref{equan}) is  equivalent to proving that,
\begin{equation}\label{ez3}
\sum_{i=l_K+N}^{\infty}p(i,j)\left(\frac{s}{1-s}\right)^{i-1}\le  \varepsilon
\left(\frac{s}{1-s}\right)^{j-1} \quad\hbox{for
$j\in\{l_K,\ldots,l_K+N-1\}$.}
\end{equation}
 Choosing $N$ such that
$l_K+N\ge M+1$, and using the expression of $p(i,j)$ stated in Lemma
\ref{coefP}, we get, for any $j\in\{l_K,\ldots,l_K+N-1\}$,
\begin{equation*}
\label{AL7}\sum_{i=l_K+N}^{\infty}p(i,j)\!\left(\frac{s}{1\!-\!s}\right)^{i-1}= \sumstack{0 \leq
n
\leq j}{0\le m \le M }\P\{\mathcal{E}'_{m,n}\}\sum_{i=l_K+N}^\infty \binom{j\!+\!i\!-\!m\!-\!n\!-\!1}{j\!-\!n}s^{j+i-1-n}(1-s)^{1-m}
\end{equation*}
where we used that $\P\{\mathcal{E}'_{m,n}\} =
\P\{\mathcal{E}_{m,n}\} = 0$ when either $n$ or $m$ is strictly
larger than $M$. We now write
\begin{multline*}
\binom{j+i-m-n-1}{j-n}s^{j+i-1-n}(1-s)^{1-m}\\
=\left(\frac{s}{1-s}\right)^{j+m-n}\binom{j+i-m-n-1}{i-m-1}s^{i-m-1}(1-s)^{j-n+1}
\end{multline*}
and we interpret the term
\begin{equation*}
\binom{j+i-m-n-1}{i-m-1}s^{i-m-1}(1-s)^{j-n+1}
\end{equation*}
as the probability of having $(i-m-1)$ successes before having
$(j-n+1)$ failures in a sequence $(B_r)_{r\ge 1}$ of i.i.d.
Bernoulli random variables with distribution $\P\{B_r = 1\} =
1-\P\{B_r=0\} = s$.  Therefore, we deduce that
\begin{multline*}
\sum_{i=l_K+N}^\infty\binom{j+i-m-n-1}{i-m-1}s^{i-m-1}(1-s)^{j-n+1}\\
\begin{aligned}
&=\P\big\{\mbox{there are at least $l_K\!+\!N\!-\!m\!-\!1$ successes before the $(j\!-\!n\!+\!1)^{\hbox{\textup{\tiny{th}}}}$ failure in $(B_r)_{r\ge 1}$}\big\} \\
&\le \P\big\{\mbox{there are at least $l_K\!+\!N\!-\!M\!-\!1$
successes before the $(l_K\!+\!N\!+\!1)^{\hbox{\textup{\tiny{th}}}}$ failure in  $(B_r)_{r\ge
1}$}\big\}.
\end{aligned}
\end{multline*}
Noticing that $s < 1/2$ since $q<b/(b+1)$, the law of large numbers
for the biased Bernoulli sequence $(B_r)_{r\ge 1}$ implies that the
above probability
 converges to $0$ as $N$ tends to infinity. Thus, for all
$\varepsilon>0$, we can find $N\ge 1$ such that (\ref{ez3}) holds.
\end{proof}

\subsection{Proofs of Theorem \ref{TheoNcook} and Proposition \ref{Prop4Cook}}

\begin{proof}[Proof of Theorem \ref{TheoNcook}]
Consider a cookie environment $\mathcal{C}  = (p_1,\ldots,p_M\,; q)$
such that:
\begin{equation*}
 q<\frac{b}{b+1} \quad \hbox{and}\quad p_i = 0 \hbox{ for all $i\leq \ent{M/2}$}.
\end{equation*}
Recall that $M_0\;\defeq\;\sharp\{1\le i \le M, p_i=0\}$ stands for
the number of cookies with strength $0$. We simply need to check
that the irreducible classes of $P$ are $\{0\}$ and $[M_0+1,\infty)$
\emph{i.e} $P$ takes the form:
\begin{equation*}
P= \left(\begin{array}{crc}
\boxed{\hspace*{0.1cm}1\hspace*{0.1cm}} & &\mbox{\LARGE{$0$}} \\
& \hspace*{-0.3cm}{\begin{array}{ccc} 0  & \ldots & 0 \\  \vdots &
\ddots &  \vdots \\ \mbox{\small{$*$}} & \ldots &
0_{\vphantom{X}}\\\end{array}}
&\\
\mbox{\huge{$*$}}&  & \hspace*{-0.35cm}\begin{array}{|c}\hline
\;\;\mbox{\Large{$\;{P_1}^{\vphantom{\vdots}}$}}\\\mbox{
}\\\end{array}
\end{array}
\right)
\end{equation*}
and it will follows from Proposition \ref{vectorpropre} that
$\lambda = \lambda_1 = \lambda_{\hbox{\tiny{sym}}}$. Thus, we just
need to check that:
\begin{enumerate}
\item[(1)] for all $1\le i\le M_0$, $p(i,i)=0$ (the index $i$ does not belong to
any irreducible class).
\item[(2)] for all $j\in \N$, $p(M_0+1,j)>0$ ($M_0+1$ belongs to the infinite irreducible class).
\end{enumerate}
The second assertion is straightforward since there are only $M_0$
cookies with strength $0$. In order to see why (1) holds, we
consider the two cases:
\begin{itemize}
\item $1\le i\le \ent{M/2}$. Then, clearly $p(i,j)=\un_{\{j=0\}}$. In particular $p(i,i)=0$.
\item $\ent{M/2}+1 \le i \le M_0$. In this case, there are $M_0\geq i$ cookies with strength $0$ in the first $2i-1
\geq M$ cookies. Therefore, there cannot be $i$ random variables
$\xi$ taking value $1$ in the sequence $(\xi_k)_{k\ge 1}$ before the
$i^{\hbox{\tiny{th}}}$ failure. This means that $p(i,i)=0$.
\end{itemize}
\end{proof}

\begin{proof}[Proof of Proposition \ref{Prop4Cook}]
Let $X$ be a $(p_1,p_2,\overbrace{0,\ldots,0}^{K times}\,; q)$
cookie random walk with $K\geq 2$. Using similar argument as before,
it is easily checked that the irreducible classes of the cookie
environment matrix $P$ are, in this case, $\{0\}$, $[1,2]$ and
$[K+1,\infty)$. Moreover, the matrix associated with the irreducible
class $[1,2]$ is given, as in Proposition \ref{RepPropSpeedZero}, by
\begin{equation}\label{mat22}
\begin{pmatrix}
    \frac{p_1}{b} +\frac{p_1 p_2}{b} -\frac{2 p_1 p_2}{b^2} & \frac{p_1 p_2}{b^2} \\
    \frac{p_1 + p_2}{b} -\frac{2 p_1 p_2}{b^2} & \frac{p_1 p_2}{b^2}  \\
\end{pmatrix}.
\end{equation}
Thus, denoting by $\nu$ the largest spectral radius of this matrix
and using Proposition \ref{vectorpropre}, we deduce that, for $q<
\frac{b}{b+1}$, the maximal spectral radius of $P$ is given by:
\begin{equation*}
\lambda = \max(\nu, \lambda_{\hbox{\tiny{sym}}}).
\end{equation*}
We conclude the proof of the Proposition using Theorem
\ref{MainTheo} and the fact that $\lambda_{\hbox{\tiny{sym}}}
> 1/b$ whenever $q\geq \frac{b}{b+1}$ (\emph{c.f.} Remark \ref{rem1}).
\end{proof}

\section{Other models}

\subsection{The case $q=0$.}\label{sectqzzero}

\begin{figure}
\begin{center}
\psset{xunit=7cm, yunit=4cm}
\begin{pspicture}[](0,-0.05)(1,1)
    \psaxes[Dx=0.5,Dy=0.5,tickstyle=bottom]{->}(0,0)(0,0)(1.05,1.05)
    \uput[0](1.05,0){$p$}
    \uput[90](0,1.05){$\P\{X\hbox{ drift to $\infty$}\}$}
    \uput[90](0.702,-0.15){$\scriptstyle{\nu(p,p)=\frac{1}{2}}$}
    \psline[linecolor=black,linestyle=dashed,linewidth=0.4pt](0,1)(1,1)
    \psline[linecolor=black,linestyle=dashed,linewidth=0.4pt](1,0)(1,1)
    \psline[linecolor=black,linestyle=dashed,linewidth=0.4pt](0.702029,0)(0.702029,1)
    \psline[linecolor=black,linestyle=dashed,linewidth=0.4pt](0.702029,0)(0.702029,-0.02)
    \psline[linecolor=red, linewidth=1pt](0,0)(0.702029,0)
    \psplot[linecolor=red, linewidth=1pt]{0.702029}{1}{
    1
    1 x sub
    2 2 x mul add x 5 exp add 2 x 3 exp mul sub x 4 exp sub 2 x 2 exp  mul sub
    mul
    x 2 exp
    div
    sub
    }
\end{pspicture}
\caption{\label{fig0q}Phase transition of a $(p,p\,;0)$ cookie
random walk on a binary tree.}
\end{center}
\end{figure}
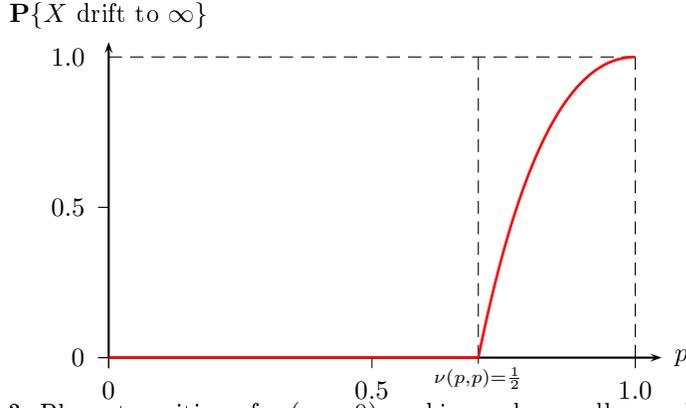

As stated in Proposition \ref{Prop4Cook}, a $(p_1,p_2,0,0\,;q)$ cookie
random walk is transient as soon as the spectral radius $\nu(p_1,p_2)$ of the matrix given in (\ref{mat22}) is strictly
larger that $1/b$. Let us remark that this quantity does not depend on $q$. Therefore, when
$\nu(p_1,p_2) >\frac{1}{b}$ the walk is transient
for any arbitrarily small $q$. In fact, using similar arguments to
those provided in this paper, one can deal with the case $q = 0$.
The study of the walk is even simpler in this case since the
cookie environment matrix $P$ does not have an infinite class ($p(i,i)=0$ for all $i\geq M$). Thus, the process $L$ is,
in this case, just a multi-type branching process with finitely many types.

However, when $q=0$, a $0-1$ law does not hold for the walk anymore
since it always has a strictly positive probability of getting stuck
at $o$ eventually (this probability is bounded below by $\prod (1-p_i)$). Therefore, the recurrence/transience criterion
now translates to finding whether the walk eventually gets stuck at
$o$ with probability $1$ or has a positive probability of drifting
towards infinity.

For instance, an easy adaptation of Proposition
\ref{Prop4Cook} (the details are left over for the reader) shows
that, for a two cookies environment $\mathcal{C} = (p_1,p_2\,; 0)$,
the walk has a positive probability of drifting towards infinity if
and only if $\nu(p_1,p_2) >\frac{1}{b}$. Moreover, the process $L$ is, in this setting, a 2-type branching process and the probability that the walk  gets stuck at $o$ is equal to the probability that $L$, starting from one particle located at $2$, dies out.
This probability of extinction is obtained by computing the fixed point of the generating function of $L$ (\emph{c.f.} Theorem $2$, p186 of \cite{AthreyaNey72}) and yields
\begin{equation*}
\P\{\hbox{$X$ is stuck at $o$ eventually}\} = \left\{
\begin{array}{ll}
1 &\hbox{if } \nu(p_1,p_2) \leq\frac{1}{b},\\
\frac{(1-p_1)(b+bp_2+p_1^2p_2^3-bp_1p_2^2-p_1p_2^3-bp_1p_2)}{p_1p_2(b-1)}&\hbox{if
} \nu(p_1,p_2)
>\frac{1}{b}.
\end{array}
\right.
\end{equation*}
An illustration of this phase transition is given in Figure \ref{fig0q} for the case of a binary tree.

\subsection{Multi-excited random walks on Galton-Watson trees}

In the paper, we assumed that the tree $\T$ is regular. Yet, one may
also consider a cookie random walk on more general kinds of trees
like, for instance, Galton-Watson trees.  Recalling the classical
model of biased random walk on Galton-Waston trees \cite{LyonsPemantlePeres96}, a natural
way to define the excited random walk on such a tree is as follows: a
cookie random $X$ on $\T$ in a cookie environment
$\mathcal{B}=(\beta_1,\ldots,\beta_M\,;\alpha) \in
[0,\infty)^M\times (0,\infty)$ is a stochastic process moving
according the following rule:
\begin{itemize}
\item If $X_n = x$ is at a vertex with $B$ children  and there remain the cookies with strengths
$\beta_j,\beta_{j+1},\ldots,\beta_M$ at this vertex, then $X$ eats
the cookie with attached strength $\beta_j$ and then jumps at time
$n+1$ to the father of $x$ with probability $\frac{1}{1+B\beta}$ and
to each son of $x$ with probability $\frac{\beta}{1+B\beta}$.
\item If $X_n = x$ is at a vertex with $B$ children  and there is no remaining cookie at site $x$, then $X$
jumps at time $n+1$ to the father of $x$  with probability
$\frac{1}{1+B\alpha}$ and to each son of $x$ with probability
$\frac{\alpha}{1+B\alpha}$.
\end{itemize}
In the case of a  regular $b$-ary tree, this model coincides with
the one studied in this paper with the transformation
$p_j=\frac{b\beta_j}{1+b\beta_j}$ and $q=\frac{b\alpha}{b\alpha+1}$.
In this new setting, one can still construct a Markov process $L$
associated with the local time process of the walk and one can
easily adapt the proof of Theorem \ref{MainTheo} to show the
following result:

\textit{Let $X$ be a $\mathcal{B}=(\beta_1,\ldots,\beta_M\,;\alpha)$ cookie
random walk on a Galton-Watson tree $\T$ with reproduction law $B$
such that $\P\{B=0\}=0$ and $\P\{B=1\}<1$ and $\E[B]<\infty$. Fix
$b\ge 2$ and let $P$ be the matrix of Definition \ref{defxip}
associated with a cookie random walk on a regular $b$-ary tree in
the cookie environment $\mathcal{C}=(p_1,\ldots,p_M\,;q)$, where
$p_i\defeq\frac{b\beta_i}{b\beta_i+1}$ and
$q\defeq\frac{b\alpha}{b\alpha+1}$ (this matrix does  not, in fact,
depend, on the choice of $b$). Then, the $\mathcal{B}$-cookie random
walk on the Galton Watson tree $\T$ is transient if and only if
\begin{equation*}
\alpha\ge1\quad  \mbox{ or
}\quad\lambda(\mathcal{C})>\frac{1}{\E[B]},
\end{equation*}
where $\lambda(\mathcal{C})$ denotes, as before, the largest spectral
radius of the irreducible sub-matrices of $P$. }

\begin{ack} We would like to thank Gady Kozma for stimulating discussion
concerning this model. We would also like to thank Sebastian Müller
and Serguei Popov for showing us how to prove the recurrence of the
walk in the critical case $\lambda(\mathcal{C}) = 1/b$.
\end{ack}

\bibliographystyle{plain}
\bibliography{excitedontree}
\end{document}